\documentclass[reqno,12pt]{amsart}
\usepackage{amssymb, amsmath,latexsym,amsfonts,amsbsy, amsthm}
\usepackage{color,esint,pgfplots}
\usepackage{cases}
\usepackage{mathtools}
\usepackage{hyperref}
\usepackage{enumitem}
\usepackage{setspace}
\usepackage{mathrsfs}
\usepackage{stmaryrd}
\usepackage{graphicx}
\allowdisplaybreaks[4]
\usepackage[left=2cm, right=2cm, top=3cm, bottom=2cm]{geometry}
\newcommand{\picdis}[1]{}
\let\e=\varepsilon
 \newcommand{\ve}{\varepsilon}

\newcommand{\p}{\partial}

\newcommand{\dd}{\mathrm{d}}

\newcommand{\Q}{\mathcal{Q}}
\newcommand{\N}{\mathcal{N}}
\newcommand{\R}{\mathbb{R}}

\newcommand{\uu}{\mathbf{u}}
\newcommand{\bxi}{\boldsymbol{\xi}}

 \newcommand{\BS}{{\mathbb{S}^2}}

\newcommand{\HH}{\mathbf{H}}

\newcommand{\nn}{\mathbf{n}}
 
\newcommand{\vv}{\mathbf{v}}

\renewcommand{\O}{\Omega}
\renewcommand{\[}{\begin{equation}}
\renewcommand{\]}{\end{equation}}
\renewcommand{\(}{\left(}
\renewcommand{\)}{\right)}

\newcommand{\tr}{\operatorname{tr}}
\renewcommand{\div}{\operatorname{div}}

\newtheorem{theorem}{Theorem}[section]
\newtheorem{definition}[theorem]{Definition}
\newtheorem{lemma}[theorem]{Lemma}
\newtheorem{proposition}[theorem]{Proposition}

\newtheorem{coro}[theorem]{Corollary}

\begin{document}
\title
{Nematic-Isotropic phase transition in Beris-Edward system at critical temperature}

\author{Xiangxiang Su}
\address{School of Mathematical Sciences, Shanghai Jiao Tong University, Shanghai 200240, P. R. China.}
\email{sjtusxx@sjtu.edu.cn}
\begin{abstract}We are concerned with the sharp interface limit for the Beris-Edward system in a bounded domain $\Omega \subset \mathbb{R}^3$ in this paper. The system can be described as the incompressible Navier-Stokes equations coupled with an evolution equation for the Q-tensor. We prove that the solutions to the Beris-Edward system converge to the corresponding solutions of a sharp interface model under well-prepared initial data, as the thickness of the diffuse interfacial zone tends to zero. Moreover, we give not only the spatial decay estimates of the velocity vector field in the $H^1$ sense but also the error estimates of the phase field. The analysis relies on the relative entropy method and elaborated energy estimates.

~\\
\noindent{\it Keywords: Beris–Edwards model; Nematic-Isotropic phase transition;  liquid crystal; sharp interface limit;  relative energy method.
}
\end{abstract}

\maketitle

\numberwithin{equation}{section}

\setcounter{secnumdepth}{3}
\tableofcontents

\indent


\section{Introduction and Main Results}

Nematic liquid crystals are a special phase of liquid crystals, whose molecular alignment exhibits a slight degree of orderliness. There are various theoretical models of nematic liquid crystals, and a lot of literature explores the relationship among these theories, such as \cite{maiersaupe, Frank, Leslie, Oseen, xin}. 
In this paper, we consider a nematic liquid crystal described by the Beris-Edward  system.  More specifically, we are concerned with the sharp interface limit of the following system in a smooth bounded domain $\Omega \subset \mathbb{R}^3$:
\begin{subequations}  \label{9.14.24}
\begin{align}
\partial_t\mathbf{v}_\varepsilon+(\mathbf{v}_\varepsilon \cdot\nabla) \mathbf{v}_\varepsilon-\Delta \mathbf{v}_{\varepsilon}+\nabla p_{\varepsilon} &=-\varepsilon \operatorname{div} (\nabla Q_{\varepsilon} \odot \nabla Q_{\varepsilon}) && \text { in } \Omega \times(0, T_{1}), \label{9.6.4}\\
\operatorname{div} \mathbf{v}_{\varepsilon} &=0 && \text { in } \Omega \times (0, T_{1}), \\
\partial_{t} Q_{\varepsilon}+(\mathbf{v}_{\varepsilon} \cdot \nabla) Q_{\varepsilon} &=\Delta Q_{\varepsilon}-\frac{1}{\varepsilon^{2}} D F (Q_{\varepsilon}) && \text { in } \Omega \times (0, T_{1}), \label{10.19.1}\\
\mathbf{v}_{\varepsilon}|_{\partial \Omega} = 0, \qquad &  Q_{\varepsilon}|_{\partial \Omega}=0 &&\text { on } \partial \Omega \times (0, T_{1}),\label{bc of omega} \\
\mathbf{v}_{\varepsilon}|_{t=0} =\mathbf{v}_{0, \varepsilon}, \qquad & Q_{\varepsilon}|_{t=0} =Q_{0, \varepsilon} && \text { in } \Omega, \label{9.6.5}
\end{align}
\end{subequations}
where $\mathbf{v}_{\varepsilon}$ and $ p_{\varepsilon}$ denote the velocity vector and pressure of the fluid respectively. And $\varepsilon$ is a small positive parameter which represents the relative strength of elastic and bulk energy. 

$Q_{\varepsilon}$ denotes the order parameter and defined as a symmetric and traceless $3\times 3$ matrix \cite{Ball J M}.
In the Landau-De Gennes theoretical framework \cite{DeGennesProst1995}, order-parameter Q-tensor is defined as follows:
\begin{equation}
Q(x)=\int_{\mathbb{S}^2} (p\otimes p-\tfrac13I_3)f(x,p)\, dp,
\end{equation}
and it quantifies the deviation of the second moment tensor from its isotropic value. Moreover, $f(x,p)$  provides the probability that the molecules, whose center of mass is in a small neighbourhood of the point $x$, are oriented in the direction $p \in \BS$(cf. \cite{maiersaupe}).  
 The configuration space of it is represented as the $5$-dimensional linear space 
\begin{equation}\label{Q-space}
  \Q=\{Q\in \R^{3\times 3}\mid Q =Q^T,~\tr Q =0\}.
\end{equation}

Nematic liquid crystals can be further divided into uniaxial and biaxial nematic liquid crystals. 
When the Q tensor has two equal non-zero eigenvalues $-\frac{s}{3}$, it is called uniaxial. In this case, the Q tensor can be written in a special diagonal form, forming a  $3$-dimensional   manifold  in $\Q$, usually as follows:
\begin{equation}\label{uniaxial}
\mathcal{U}\triangleq \left\{Q\in \Q~\Big|~ Q=s\left(\mathbf{e} \otimes \mathbf{e}-\frac{1}{3} I_3\right)\quad \text{for some}~ s \in \mathbb{R}~\text{and}~ \mathbf{e}  \in \BS\right\}.
\end{equation}
For the introduction to the biaxial case, the readers are encouraged to refer to other references, such as \cite{Majumdar}. 

In \eqref{9.6.4}, the term $\nabla Q_{\varepsilon} \odot \nabla Q_{\varepsilon}$ represents the $3 \times 3$ matrix, where each element in the $(i, j)$-th position corresponds to the dot product of the gradients $\partial_{x_i} Q_{\varepsilon} : \partial_{x_j} Q_{\varepsilon}$, where $1 \le i,j \le 3$. 
And an important fact about $Q_{\varepsilon}$ is that
\begin{align} \label{basic constant1}
\|Q_\ve\|_{L^\infty(\Omega\times(0,T))}\leq c_0(a,b,c,\|Q_{0, \varepsilon}\|_{\mathrm{L}^{\infty}(\Omega)}) ,
\end{align}
which will be proved in Lemma \ref{L infinity bound}.

$D  F(Q)$ in \eqref{10.19.1} means the variation of $F(Q)$ in space $\Q$, where $F(Q)$ is the bulk energy density used to describe the bulk effect. It is  usually expressed as a fourth-order polynomial with respect to Q. A typical example takes the form:
\begin{equation}\label{bulkED}
F(Q)=
\frac{a}{2}\mathrm{tr}(Q^2)-\frac{b}{3}\tr(Q^3)+\frac{c}{4}\(\tr(Q^2)\)^2,
\end{equation} 
where $a, b$ and $c$ are positive constants that depend on the material properties and temperature. 
Then $D  F(Q)$ can be expressed as
\begin{equation}\label{pF}
(D F(Q))_{ij}=a Q_{i j}-b \sum_{k=1}^3Q_{i k} Q_{k j}+c|Q|^{2} Q_{i j}+\frac{b}{3} |Q|^{2} \delta_{i j}.
\end{equation}

$F(Q)$ is also related to a free energy associated with the orientation of liquid crystal molecules, which is denoted by
\begin{equation}\label{GL energy}
 \mathcal{E}_\ve(Q)=\int_{\O} \(\frac\ve2 |\nabla Q|^2+\frac{1}\ve F(Q)  \)\, dx.
\end{equation}
This integration occurs in a smooth bounded domain $\Omega \subset \mathbb{R}^3$. Moreover, $|\nabla Q|=\sqrt{\sum_{ijk}|\p_k Q_{ij}|^2}$. 
The stationary points of $F(Q)$ correspond to uniaxial (cf. \cite{Majumdar0}). In this case,
 \begin{equation}\label{uni bulk}
    F (Q)=\frac{s^2}{27}(9a-2bs+3cs^2)\triangleq f(s),~\text{if $Q$ is uniaxial}.
 \end{equation}

Additionally, when choosing $s=s_\pm$, where:
\begin{equation}\label{splus}
 s_-=0 \quad\text{ and } \quad s_+=\frac{b+\sqrt{b^2-24ac}}{4c},
\end{equation}
we ascertain that $f'(s)=0$, which indicates that $f(s)$ reaches local minima.

In this study, we will focus on the bistable case(this means liquid crystal materials arrange themselves into the nematic phase and the isotropic phase with equal probability ) when
\begin{equation}\label{critical coeff}
b^2=27ac,\quad\text{and}~  a,c>0.
\end{equation}
Through rescaling, we can select $a=3,b=9,c=1$.
From the physics viewpoint, the choices of these coefficients correspond to the so-called eutectic point at which the system  simultaneously tend to favor the nematic phase and the isotropic phase \cite[Section 2.3]{DeGennesProst1995}.
In this case, the local minimizers to \eqref{splus} become the  global minimizers of $F(Q)$:
\begin{equation}\label{lemma1}
F(Q)\geq 0~ \text{and  the equality holds  if and only if}~ Q\in \{0\}\cup \N,
\end{equation}
 where
\begin{equation}\label{manifold}
  \N \triangleq \left\{Q\in\Q\mid Q=s_+\(\mathbf{e}\otimes \mathbf{e}-\frac 13I_3\)~\text{for some}~\mathbf{e}\in\BS\right\},\qquad \text{with}~s_+=\sqrt{\frac {3a}c}.
\end{equation}

We will show that as the parameter $\e$ approaches zero, the limit of (\ref{9.14.24}) corresponds to the following system.
\begin{subequations} \label{3.3.1}
\begin{align}
\partial_t\mathbf{v}+(\mathbf{v}\cdot \nabla) \mathbf{v}-\Delta \mathbf{v}+\nabla p & =0 && \text { in } \Omega^{\pm}(t),\  t \in [0, T_{0}], \label{11.8.1}\\
\operatorname{div} \mathbf{v} & =0 && \text { in } \Omega^{\pm}(t),\  t \in [0, T_{0}], \\
{[2 D \mathbf{v}-p \mathbf{I}] \mathbf{n}_{\Gamma_{t}}} & =-\sigma H_{\Gamma_{t}} \mathbf{n}_{\Gamma_{t}} && \text { on } \Gamma_{t},\  t \in [0, T_{0}], \label{9.17.1}\\
{[\mathbf{v}]} & =0 && \text { on } \Gamma_{t},\  t \in [0, T_{0}],\label{9.17.23} \\
\mathbf{v}|_{\partial \Omega} & =0 && \text { on } \partial \Omega \times [0, T_{0}], \\
V_{\Gamma_{t}}-\mathbf{n}_{\Gamma_{t}} \cdot \mathbf{v}|_{\Gamma_{t}} & =H_{\Gamma_{t}} && \text { on } \Gamma_{t},\  t \in [0, T_{0}] \label{11.8.2}.
\end{align}
\end{subequations}
The free boundary $\Gamma_t$ is determined by an evolution through mean curvature flow. And we define $\Gamma=\bigcup_{t\in [0,T_0]}\Gamma_t \times \{t\}$. The domain $\Omega$ is divided by $\Gamma_t$ into two parts $\Omega^\pm(t)$ and the smooth simply-connected domain $\Omega^+(t)$ is closed by $\Gamma_t$  for each $t\in[0, T_0]$. Moreover, $D \mathbf{v}=\frac{1}{2}(\nabla\mathbf{v}+(\nabla\mathbf{v})^{\top})$ represents the stress tensor. Additionally, $V_{\Gamma_{t}}$ and $H_{\Gamma_t}$ denote the normal velocity and (mean) curvature of the interface $\Gamma_t$, while $\mathbf{n}_{\Gamma_{t}}$ is the outward normal vector to $\Omega^-(t)$. $\sigma>0$ is the surface tension coefficient and the definition is provided as follows: 
\begin{align}\label{1.4.2.17}
\sigma=\frac 2{\sqrt{3}} \int^{s_+}_{0} \sqrt{ f(\tau) } \, d\tau ,
\end{align}
 where $f(\tau)$ is defined in \eqref{uni bulk}. Finally, in \eqref{9.17.1} and \eqref{9.17.23}, $[h]$ represents the jump of $h$ across $\Gamma_t$, the definition of which is provided below:
$$
[h](x, t)=\lim _{d \rightarrow 0+}[h\left(x+\mathbf{n}_{\Gamma_{t}}(x) d\right)-h\left(x-\mathbf{n}_{\Gamma_{t}}(x) d\right)].
$$

The existence of a generalized solution for the problem \eqref{3.3.1} has been studied in \cite{liu Chun}. However, in the calculations presented below, we need regularity assumption for the velocity field:
\begin{align}\label{9.18.1}
\mathbf{v} \in W^{1, \infty}\left([0, T] ; W^{1, \infty}(\Omega)\right) \cap C_t^1 C_x^0(\bar{\Omega} \times[0, T] \backslash \Gamma) \cap C_t^0 C_x^2(\bar{\Omega} \times[0, T] \backslash \Gamma),
\end{align}
which has been established and proven in \cite{Abels-Moser, Sebastian Hensel}.

In the process of studying the sharp interface limit, the relative entropy method  helps avoid the complexity of the construction of approximate solutions, so we adopt it in our paper.
Inspired by \cite{Fischer J1,fischer2020convergence,MR3353807}, the relative energy for the models \eqref{9.14.24} and \eqref{3.3.1} is defined in the following manner.
\begin{align}\label{entropy}
E \left[\mathbf{v}_{\varepsilon}, Q_{\varepsilon} \mid \mathbf{v}, \chi\right](t)\triangleq  &\int_\Omega \frac{1}{2}|\mathbf{v}_{\varepsilon}-\mathbf{v}|^2(\cdot,t)\;\mathrm{d}x+ \int_\O \(\frac{\varepsilon}{2}\left|\nabla Q_\ve(\cdot,t)\right|^2+\frac{1}{\varepsilon} {F_\ve(Q_\ve(\cdot,t))}- \bxi\cdot\nabla \psi_\ve(\cdot,t) \)\,\mathrm{d} x,
\end{align}
where 
\begin{align}
F_\ve(q)& \triangleq F(q)+\ve^{3}.\label{new bulk}
\end{align}
The vector field $\boldsymbol{\xi}$ is an appropriate extension of the unit normal vector field $\mathbf{n}_{\Gamma_{t}}$, and its specific definition and additional properties will be detailed in Section 2.

We also introduce the measure for the difference in the phase indicators, which is defined as follows:
\begin{align} \label{area diff}
E_{\mathrm{vol}}\left[Q_{\varepsilon} \mid \chi\right](t) \triangleq \int_{\Omega}(\sigma \chi-\psi_{\varepsilon})(\cdot,t) \vartheta\(d_\Gamma(\cdot,t)\) \,\mathrm{d} x,
\end{align}
where 
\begin{align*}
\chi=\chi_{\Omega^-}, \quad \psi_\ve(x,t)&\triangleq d^F_\ve\circ Q_\ve(x,t),\quad\text{and}~ d^F_\ve(q) \triangleq (\phi_{\ve} *d^F)(q),~\forall q\in\Q.
\end{align*}
Additionally, $\vartheta$ represents a truncation of the signed distance function $d_\Gamma$, $\phi_{\ve}$ denotes a family of mollifiers, and $d^F$ corresponds to  a quasi-distance function. Detailed explanations and  properties  of these elements will be provided in the Section 2.

With these preparations in place, we are now ready to present the main theorems.

\begin{theorem} \label{thm1.1}
Assume that the system of equations (\ref{9.14.24}) admits a global weak solution $(\mathbf{v}_{\varepsilon},Q_{\varepsilon})$
on a time interval $[0, T_1]$ with $T_{1} \in(0, \infty)$ in the sense of Definition \ref{weak solutions}, and $(\mathbf{v},\Gamma)$ is a strong solution to the sharp interface
limit model (\ref{3.3.1}) on $[0, T_0]\ (T_0\leq T_1)$ in the sense of Definition \ref{strong solutions}.
Also, the initial data satisfy the assumption 
\begin{equation}\label{initial}
 {E \left[\mathbf{v}_{\varepsilon}, Q_{\varepsilon} \mid \mathbf{v}, \chi\right](0)+E_{\mathrm{vol}}\left[Q_{\varepsilon} \mid \chi\right] (0)\leq C_0\ve}
\end{equation}
for some constant $C_0$ that does not depend on $\ve$,  where $E \left[\mathbf{v}_{\varepsilon}, Q_{\varepsilon} \mid \mathbf{v}, \chi\right]$ and $E_{\mathrm{vol}}\left[Q_{\varepsilon} \mid \chi\right]$ are defined in\eqref{entropy} and \eqref{area diff} respectively.
Then there exist positive constants $C=C(\mathbf{v},\Gamma, T_0)$ and $\varepsilon_0\in(0,1]$,
such that the following estimate
\begin{align*}
E \left[\mathbf{v}_{\varepsilon}, Q_{\varepsilon} \mid \mathbf{v}, \chi\right](T)+E_{\mathrm{vol}}\left[Q_{\varepsilon} \mid \chi\right](T)+\frac{1}{2} \int_0^T \!\int_{\Omega} |\nabla \mathbf{v}_{\varepsilon}-\nabla \mathbf{v}|^2 \mathrm{d} x \mathrm{d} t \le  C \e
\end{align*}
holds true for any $\varepsilon \in(0,\varepsilon_0)$ and almost every $T\in(0,T_0)$.
Furthermore,
\begin{align*}
&\frac 1{4\ve} \int_0^T \!\int_{\Omega} \left| \ve \Big(\partial_{t} Q_{\varepsilon}+(\mathbf{v}_{\varepsilon} \cdot \nabla) Q_{\varepsilon}\Big) -(\operatorname{div} \bxi) D d^F_\ve(Q_\ve)  \right|^2\mathrm{d} x \mathrm{d} t \notag\\
+&\frac {\ve}{4}  \int_0^T \!\int_{\Omega} |\partial_{t} Q_{\varepsilon}+(\mathbf{v}_{\varepsilon} \cdot \nabla) Q_{\varepsilon}+(\HH \cdot\nabla) Q_\ve|^2\,\mathrm{d} x \mathrm{d} t \le  C \e.
\end{align*}
\end{theorem}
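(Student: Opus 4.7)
The plan is to follow the relative entropy strategy of Fischer--Hensel and their successors, adapted to the Q-tensor setting. First I would differentiate $E[\mathbf{v}_\varepsilon, Q_\varepsilon\mid \mathbf{v},\chi](t)$ in time, exploiting \eqref{9.6.4}--\eqref{10.19.1} for the singular side and the strong regularity \eqref{9.18.1} together with the interface evolution \eqref{11.8.1}--\eqref{11.8.2} for the limit side. The velocity part yields the standard estimate
\[
\tfrac{\mathrm{d}}{\mathrm{d}t}\!\int_\Omega \tfrac12|\mathbf{v}_\varepsilon-\mathbf{v}|^2\,\mathrm{d}x + \int_\Omega|\nabla\mathbf{v}_\varepsilon-\nabla\mathbf{v}|^2\,\mathrm{d}x \le \text{(terms involving }\nabla Q_\varepsilon\odot\nabla Q_\varepsilon-\sigma(\mathbf{I}-\bxi\otimes\bxi)|\nabla d_\Gamma|\text{)} + R_{\mathrm{vel}},
\]
where the anisotropic stress is tested against $\nabla \mathbf{v}$ using integration by parts and the identity $[2D\mathbf{v}-p\mathbf{I}]\mathbf{n}_{\Gamma_t}=-\sigma H_{\Gamma_t}\mathbf{n}_{\Gamma_t}$.

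Next I would treat the Q-tensor part of \eqref{entropy}. Testing \eqref{10.19.1} against $\Delta Q_\varepsilon-\varepsilon^{-2}DF(Q_\varepsilon)$ gives the classical Modica--Mortola-type dissipation $\varepsilon^{-1}\|\varepsilon\partial_t Q_\varepsilon+\varepsilon(\mathbf{v}_\varepsilon\cdot\nabla)Q_\varepsilon\|_{L^2}^2$ up to transport terms. The key algebraic manipulation is to split this square via the identity
\[
\bigl|\varepsilon(\partial_t+\mathbf{v}_\varepsilon\cdot\nabla)Q_\varepsilon\bigr|^2 = \bigl|\varepsilon(\partial_t+\mathbf{v}_\varepsilon\cdot\nabla)Q_\varepsilon-(\operatorname{div}\bxi)Dd^F_\varepsilon(Q_\varepsilon)\bigr|^2 + 2(\operatorname{div}\bxi)Dd^F_\varepsilon(Q_\varepsilon)\!:\!\varepsilon(\partial_t+\mathbf{v}_\varepsilon\cdot\nabla)Q_\varepsilon - (\operatorname{div}\bxi)^2|Dd^F_\varepsilon|^2,
\]
so that the cross term reproduces $\partial_t\psi_\varepsilon+\mathbf{v}_\varepsilon\cdot\nabla\psi_\varepsilon$ times $\operatorname{div}\bxi$. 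Integrating the mixed term $\bxi\cdot\nabla\psi_\varepsilon$ in \eqref{entropy} by parts in time and using $\partial_t\bxi+(\mathbf{v}\cdot\nabla)\bxi+(\nabla\mathbf{v})^\top\bxi=O(d_\Gamma)$ (an extension property of $\bxi$ one reads from Section 2) eliminates the leading-order curvature contribution against the mean curvature flow law \eqref{11.8.2} and produces the second good square $\frac{\varepsilon}{4}|\partial_t Q_\varepsilon+(\mathbf{v}_\varepsilon\cdot\nabla)Q_\varepsilon+(\HH\cdot\nabla)Q_\varepsilon|^2$ after a further completion of square using $\HH=-H_{\Gamma_t}\mathbf{n}_{\Gamma_t}$ extended off $\Gamma$.

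The heart of the argument is the coercivity of
\[
\int_\Omega\!\Bigl(\tfrac{\varepsilon}{2}|\nabla Q_\varepsilon|^2+\tfrac{1}{\varepsilon}F_\varepsilon(Q_\varepsilon)-\bxi\cdot\nabla\psi_\varepsilon\Bigr)\mathrm{d}x \ge 0,
\]
which follows from $|\nabla\psi_\varepsilon|\le |\nabla Q_\varepsilon||Dd^F_\varepsilon(Q_\varepsilon)|$, the inequality $|Dd^F|\le\sqrt{2F}$ from the quasi-distance construction, Young's inequality, and $|\bxi|\le 1$. The same completion-of-square identifies $\varepsilon|\nabla Q_\varepsilon|-\varepsilon^{-1/2}\sqrt{2F}$ and $\mathbf{n}_\varepsilon\cdot\bxi$ defect terms, which via $E_{\mathrm{vol}}$ can be upgraded to the strong tilt-excess control of $\nabla\psi_\varepsilon$ against $\bxi|\nabla\psi_\varepsilon|$. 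Combining these with \eqref{area diff}, the evolution of $E_{\mathrm{vol}}$ (which introduces the transport derivative of $\vartheta(d_\Gamma)$ and hence a boundary term controlled by the tilt-excess), and the assumption \eqref{initial}, I would write all error terms in the Gronwall-admissible form $C\bigl(E+E_{\mathrm{vol}}+\varepsilon\bigr)+\frac12\|\nabla\mathbf{v}_\varepsilon-\nabla\mathbf{v}\|_{L^2}^2$.

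The main obstacle is handling the Q-tensor bulk structure: unlike the scalar Allen--Cahn case, $F$ vanishes on the whole manifold $\mathcal{N}\cup\{0\}$, so the quasi-distance $d^F$ is only Lipschitz and $Dd^F(Q_\varepsilon)$ need not be a unit vector aligned with $\nabla Q_\varepsilon$. This forces one to track anisotropic error terms in the tilt-excess and to exploit the uniform $L^\infty$ bound \eqref{basic constant1} together with the $\varepsilon^3$-regularization in $F_\varepsilon$ to replace the classical equipartition identity by its approximate counterpart. Once these anisotropic corrections are bounded by $C\varepsilon+CE_{\mathrm{vol}}$, Gronwall's inequality applied to $E+E_{\mathrm{vol}}$ closes the estimate and delivers both displayed bounds simultaneously, since the two dissipation squares appear with explicit positive constants on the left-hand side of the differential inequality.
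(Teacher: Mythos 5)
Your proposal follows essentially the same relative--entropy strategy as the paper's proof: starting from the energy inequality in the weak formulation, completing squares with $(\operatorname{div}\bxi)Dd^F_\varepsilon(Q_\varepsilon)$ and the extended mean-curvature vector $\HH$ to produce the two dissipation terms, using the coercivity $|\bxi\cdot\nabla\psi_\varepsilon|\le\frac{\varepsilon}{2}|\nabla Q_\varepsilon|^2+\frac{1}{\varepsilon}F_\varepsilon$ from $|Dd^F_\varepsilon|\le\sqrt{2F_\varepsilon}$, evolving $E_{\mathrm{vol}}$ via the transported truncation $\vartheta(d_\Gamma)$, and closing with Gronwall. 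One caution: the interface is transported by $\HH+\tilde{\mathbf{v}}$ (not by $\mathbf{v}$ alone), so the $O(d_\Gamma)$ transport identity for $\bxi$ that you invoke must read $\partial_t\bxi+((\HH+\mathbf{v})\cdot\nabla)\bxi+(\nabla\HH+\nabla\tilde{\mathbf{v}})^\top\bxi=O(d_\Gamma)$; omitting $\HH$ there would leave an $O(H_{\Gamma_t})$ error rather than $O(d_\Gamma)$, although your later ``further completion of square using $\HH$'' would recover it.
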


At this point, we digress to mention that the sharp interface limit for the scalar case has been widely studied, as seen in references like \cite{Fei, Liu Y N, fischer2020convergence, Sebastian Hensel, Jiang}. We arrive at this theorem under the assumption that the initial conditions satisfy \eqref{initial}, rather than
\begin{equation} \label{24.1.30.1}
 {E \left[\mathbf{v}_{\varepsilon}, c_{\varepsilon} \mid \mathbf{v}, \chi\right](0)+E_{\mathrm{vol}}\left[c_{\varepsilon} \mid \chi\right] (0)\leq C_0\ve^2}.
\end{equation}
in scalar form. In the scalar case, $Q_{\varepsilon}$ is replaced by $c_{\varepsilon}$.  For the scalar form $c_{\varepsilon}$, the initial value satisfying \eqref{24.1.30.1} is direct. However, for the tensor $Q_{\varepsilon}$, it is not clear whether an initial value satisfying  the same condition can be achieved. In fact, following the construction of the initial condition $Q_{0, \varepsilon}$ presented in \cite[Proposition 2.3]{liu}, we can prove \eqref{initial}. We will restate its construction and provide a detailed proof of it in Proposition \ref{prop 2.9}.

\begin{theorem}\label{main thm}
Let's consider that the initial value still satisfies \eqref{initial} and $\Omega^+(t)$ is a smooth simply-connected domain. Then there exists a sequence of $\ve_k$ that tends to 0 as $k$ approaches $\infty$, such that
\begin{equation}\label{strong global of Q}
  Q_{\ve_k}\xrightarrow{k\to\infty }   Q=s_\pm\left(\mathbf{u} \otimes \mathbf{u}-\tfrac{1}{3} I_3\right),~\text{strongly in}~  C([0,T];L^2_{loc}(\Omega^\pm(t)))
\end{equation}
holds true for some $T \le T_0$,  where $s_\pm$ are defined in \eqref{splus}  and
\begin{equation}\label{reg limit}
\mathbf{u}\in H^1( \Omega^+_T;\BS), \quad \Omega^+_T \triangleq\bigcup_{t\in (0,T)}\(\Omega^+(t)\times \{t\}\).
\end{equation}
Furthermore, $\uu$ represents a harmonic map heat flow into $\BS$ with homogenous Neumann boundary conditions, which satisfies
\begin{equation}\label{weak harmonic}
 \int_0^T \!\int_{\Omega^+(t)}  \p_t \mathbf{u}\wedge \mathbf{u}\cdot \boldsymbol{\varphi}\, \mathrm{d} x \mathrm{d} t+\sum_{j=1}^3 \int_0^T \!\int_{\Omega^+(t)} v_j(\p_j  \mathbf{u}\wedge \mathbf{u})\cdot \boldsymbol{\varphi}\, \mathrm{d} x \mathrm{d} t  =-\sum_{j=1}^3 \int_0^T \!\int_{\Omega^+(t)} \p_j  \mathbf{u}  \wedge\mathbf{u} \cdot  \p_j \boldsymbol{\varphi}\, \mathrm{d} x \mathrm{d} t
\end{equation}
for any $\boldsymbol{\varphi}\in C^1(\overline{\Omega}\times [0,T];\R^3)$, where $\wedge$ means the wedge product in $\R^3$.
\end{theorem}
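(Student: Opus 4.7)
\emph{Step 1 (phase identification).} The plan is to combine the estimates of Theorem~\ref{thm1.1} with a commutator test function that annihilates the singular bulk force $\varepsilon^{-2}DF(Q_\varepsilon)$, and to exploit the uniaxial parametrisation \eqref{manifold} to convert the $Q$-tensor equation \eqref{10.19.1} into the claimed weak harmonic-map heat flow. From the classical energy inequality together with the well-prepared initial data, $\int_0^T\!\int_\Omega \varepsilon^{-1}F(Q_\varepsilon)\,\dd x\,\dd t\le CT$, hence $F(Q_\varepsilon)\to 0$ in $L^1(\Omega\times(0,T))$; combined with the $L^\infty$ bound \eqref{basic constant1} I extract a subsequence $\varepsilon_k\downarrow 0$ with $Q_{\varepsilon_k}\rightharpoonup^\ast Q$ in $L^\infty$. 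By \eqref{lemma1} the limit $Q$ takes values in $\{0\}\cup\N$ a.e. The bound $E_{\mathrm{vol}}[Q_{\varepsilon_k}\mid\chi]\le C\varepsilon$, together with $\psi_{\varepsilon_k}\to\sigma\chi$ in $L^1$, identifies which phase sits on which side, forcing $Q\equiv 0$ on $\Omega^-_T$ and $Q = s_+(\mathbf{u}\otimes\mathbf{u}-\tfrac13 I_3)$ on $\Omega^+_T$ for some measurable $\mathbf{u}\colon\Omega^+_T\to\BS$; the simple-connectedness of $\Omega^+(t)$ is precisely what permits a global consistent lift of the line field to $\BS$.

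\emph{Step 2 (strong convergence and $H^1$ regularity).} Using the auxiliary $L^2$-bound of Theorem~\ref{thm1.1} on $\partial_t Q_\varepsilon+(\mathbf{v}_\varepsilon\cdot\nabla)Q_\varepsilon+(\HH\cdot\nabla)Q_\varepsilon$ to obtain time-equicontinuity, and the natural energy to control $\nabla Q_{\varepsilon_k}$ in $L^2_{\mathrm{loc}}(\Omega^\pm_T)$ away from $\Gamma$, an Aubin--Lions argument upgrades weak to strong convergence in $C([0,T];L^2_{\mathrm{loc}}(\Omega^\pm(t)))$ and proves \eqref{strong global of Q}. The pointwise identity $|\nabla Q|^2 = 2 s_+^2|\nabla\mathbf{u}|^2$ valid on $\N$ then gives $\mathbf{u}\in H^1(\Omega^+_T;\BS)$ as in \eqref{reg limit}.

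\emph{Step 3 (commutator test function).} The key algebraic observation, which follows from \eqref{pF}, is that $DF(Q)$ is a polynomial in $Q$ plus an $|Q|^2 I_3$ term, so $[DF(Q),Q]\equiv 0$, and therefore for any antisymmetric matrix field $\Phi$,
\[
\langle DF(Q_\varepsilon),\,[Q_\varepsilon,\Phi]\rangle = \tr\bigl(\Phi[DF(Q_\varepsilon),Q_\varepsilon]\bigr) = 0.
\]
Given $\bphi\in C^1(\overline\Omega\times[0,T];\R^3)$ I define the antisymmetric matrix $\Phi_{\bphi}$ by $\Phi_{\bphi}\mathbf{w}=\bphi\wedge\mathbf{w}$; the commutator $[Q_\varepsilon,\Phi_{\bphi}]$ is again symmetric and traceless and serves as a test field in the weak form of \eqref{10.19.1}. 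The $\varepsilon^{-2}DF$ term drops out identically, and the cyclic-trace identity $\langle\partial_j Q_\varepsilon,[\partial_j Q_\varepsilon,\Phi]\rangle=0$ eliminates the would-be $|\nabla\mathbf{u}|^2\mathbf{u}$ Lagrange-multiplier term after one integration by parts, leaving
\[
\int_0^T\!\int_\Omega \langle\partial_t Q_\varepsilon+(\mathbf{v}_\varepsilon\cdot\nabla)Q_\varepsilon,\,[Q_\varepsilon,\Phi_{\bphi}]\rangle\,\dd x\,\dd t = -\sum_{j=1}^{3}\int_0^T\!\int_\Omega\langle\partial_j Q_\varepsilon,\,[Q_\varepsilon,\partial_j\Phi_{\bphi}]\rangle\,\dd x\,\dd t.
\]
Passing to the limit with the strong $L^2_{\mathrm{loc}}$-convergence of $Q_{\varepsilon_k}$, the weak $L^2_{\mathrm{loc}}(\Omega^+_T)$-convergence of $\nabla Q_{\varepsilon_k}$, and the strong $L^2$-convergence of $\mathbf{v}_{\varepsilon_k}$ (all furnished by Theorem~\ref{thm1.1}), I substitute $Q=s_+(\mathbf{u}\otimes\mathbf{u}-\tfrac13 I_3)$ and compute the resulting inner products using $\mathbf{u}\cdot\partial_t\mathbf{u}=\mathbf{u}\cdot\partial_j\mathbf{u}=0$. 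Division by $2 s_+^2$ produces exactly \eqref{weak harmonic}; contributions from $\Omega^-_T$ vanish since $Q\equiv 0$ there, which also absorbs any boundary integral on $\Gamma_t$ and delivers the homogeneous Neumann condition.

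\emph{Main obstacle.} The most delicate point is the strong $L^2_{\mathrm{loc}}(\Omega^+_T)$ compactness of Step~2: $\partial_t Q_\varepsilon$ inherits the singular term $\varepsilon^{-2}DF(Q_\varepsilon)$, which is not uniformly bounded in $H^{-1}$, obstructing a naive Aubin--Lions application. Closing the compactness step should require decomposing $Q_\varepsilon-\Pi_\N(Q_\varepsilon)$ into tangent and normal components, using the quadratic nondegeneracy of $F$ normal to $\N$ to bound the normal part by $O(\sqrt\varepsilon)$ in $L^2_{\mathrm{loc}}$ via the relative-energy control $E\le C\varepsilon$, and exploiting the tangential vanishing of $DF$ on $\N$ (reflected in $[DF(Q),Q]=0$) to recover enough control on the weak norm of $\varepsilon^{-2}DF(Q_\varepsilon)$. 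Once that is in place, the remainder is driven by the algebraic identity displayed above.
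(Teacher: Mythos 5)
Your overall strategy matches the paper's: associate to $\bphi$ the antisymmetric matrix $\Phi$, test the $Q$-equation against $[Q_\varepsilon,\Phi]$ so that $[DF(Q_\varepsilon),Q_\varepsilon]=0$ annihilates the singular bulk force, and pass to the limit using the uniaxial parametrisation of $Q$. However, there is a genuine gap in your Step~3: the identity you display is an integral over all of $\Omega$, and you then propose to pass to the limit using \emph{local} $L^2$-convergence of $\nabla Q_{\varepsilon_k}$ away from $\Gamma$. This is not sufficient, because $\nabla Q_{\varepsilon_k}$ is only $\sqrt{\varepsilon}$-weighted $L^2$-bounded near $\Gamma_t$, so nothing you have stated controls $\int_0^T\!\int_{\Gamma_t(\delta)}\langle\partial_j Q_{\varepsilon_k},[Q_{\varepsilon_k},\partial_j\Phi]\rangle$. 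The paper handles this by the crucial structural identity
\[
[\partial_j Q_{\varepsilon},Q_{\varepsilon}]=[\partial_j Q_{\varepsilon}-\Pi_{Q_\varepsilon}\partial_j Q_{\varepsilon},Q_{\varepsilon}],
\]
that is, the commutator automatically annihilates the transition-layer component $\Pi_{Q_\varepsilon}\nabla Q_\varepsilon$; since the transverse part is \emph{globally} $L^2$-bounded by the relative-energy estimate \eqref{energy bound2}, the commutators $[\partial_i Q_{\varepsilon_k},Q_{\varepsilon_k}]$ admit weak $L^2(\Omega\times(0,T))$ limits $\bar{S}_i$ on the whole domain. The paper's proof then splits the integral into $\Omega^\pm(t)\setminus\Gamma_t(\delta)$ and $\Gamma_t(\delta)$, identifies the limit with $2s_+^2\,\partial_i\mathbf{u}\wedge\mathbf{u}$ away from $\Gamma$, and uses absolute continuity of the $\bar{S}_i$-integral to send $\delta\to 0$.

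Two further points worth noting. First, your Step~1 as written is circular: weak-star $L^\infty$ convergence plus $F(Q_\varepsilon)\to 0$ in $L^1$ does not force the weak-star limit to land in $\{0\}\cup\N$, since $F$ is nonconvex; you need the strong $L^2_{\mathrm{loc}}$ convergence of Step~2 first, and the paper simply imports the whole package (compactness, phase identification, uniaxial structure, $H^1$-regularity of $\mathbf{u}$) from Proposition~5.2 of \cite{liu}, where the same commutator identity also underlies the compactness argument you flag as the ``main obstacle.'' Second, your observation that simple-connectedness of $\Omega^+(t)$ is what allows the global lift of the line field to $\BS$ is correct and is exactly the paper's reason for including that hypothesis.
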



Theorem \ref{main thm} can be viewed as a positive answer to the Keller-Rubinstein-Sternberg problem, as discussed in \cite{Rubinstein}, with the inclusion of the fluid system. The asymptotic behavior as $\e \rightarrow 0$ for  the energy-minimizing static solutions in the time-independent case of the Keller-Rubinstein-Sternberg problem was analyzed in \cite{lin} and its continuation \cite{lin 1}. By employing the Ginzburg–Landau approximation approach, \cite{wang} explored  the asymptotic limit of the solutions of the Q-tensor flow. A similar result was also determined by \cite{fei2015dynamics,MR4059996} through the utilization of the matched asymptotic expansion method and the spectral condition of a linearized operator. \cite{MR3910590,MR4076075} investigated a model problem involving transitions between nematic and isotropic phases with highly disparate elastic constants. Very recently, \cite{lin 2} studied the isotropic-nematic phase transition of liquid crystals. 

This article uses the relative entropy method and convergence method to prove the solution to the Beris-Edward  system converges to the one to the system of the Navier–Stokes equations coupled with the harmonic map flow as the $\e$ tends to zero. 
\cite{liu} focused on the model of nematic-isotropic phase transitions and proved a convergence result. Compared with the reference \cite{liu}, we extended our consideration to the coupled system with fluid dynamics. Besides the relative entropy estimates obtained in their paper, we also obtain estimates for the bulk error. Convergence rates for both aspects are also provided under well-prepared initial data.
Analytically, the uniform estimate for $Q_\varepsilon$ is crucial. Specifically, obtaining the uniform estimate allows us to avoid certain technical difficulties in the proof of Lemma \ref{basic control est}. However, due to the presence of the fluid, the proof of the uniform bound estimate becomes trickly different. Consequently, our approach in dealing with this  is guided by \cite{Guillen-Gonzalez}.
Furthermore, because of the influence of the fluid, a suitable extension for the velocity field $\vv$ is required.
Finally, the capillary term $\operatorname{div} (\nabla Q_{\varepsilon} \odot \nabla Q_{\varepsilon})$ in \eqref{9.6.4}is  a well-known challenge in the study of sharp interface limits. This is due to the strong layer structure of $Q_{\varepsilon}$ appearing in the interface region, which is singular and unbounded. And we employ the energy stress tensor $T_\ve$ in \eqref{24.1.22.1} to overcome this difficulty.

The structure of this paper is as follows. Section 2 provides definitions and notations commonly used, while Section 3 derives corresponding estimates for the relative energy. Section 4 provides estimates for the bulk error. Finally, in Section 5, the Theorem \ref{thm1.1} is proven using the previous  estimates. Additionally, we employ convergence method to complete the proofs of Theorem \ref{main thm}.
 \section{Preliminaries}\label{sec Modulated}

We start with the definition of weak solutions to the Beris-Edward  system \eqref{9.14.24}. We need the following function spaces:
\begin{align*}
& L_\sigma^2(\Omega) =\left\{\vv \in L^2\left(\Omega ; \mathbb{R}^d\right), \operatorname{div} \vv=0, \gamma(\vv)=0\right\}, \\
& H_{0, \sigma}^1(\Omega) =\left\{\vv \in H_0^1\left(\Omega ; \mathbb{R}^d\right), \operatorname{div} \vv=0\right\}.
\end{align*}
Here $\gamma(\vv)=\vv \cdot \nn \in H^{-\frac{1}{2}}(\partial \Omega)$ is defined in a generalized trace sense,  where $\nn$ is the normal vector of $\partial \Omega$. Furthermore, if $X$ is a Banach space and $T>0$, then $B C_w([0, T] ; X)$ consists of all bounded functions $f:[0, T] \rightarrow X$ that are weakly continuous.

\begin{definition} \label{weak solutions}
$(\mathbf{v}_{\varepsilon},Q_{\varepsilon})$ is called a weak solution of the system of equations \eqref{9.14.24},  if for all $T\in(0,T_1)$, the pair $(\mathbf{v}_{\varepsilon},Q_{\varepsilon})$ satisfies the following  requirements:\\
i) It holds
\begin{align*}
& \mathbf{v}_{\varepsilon} \in B C_w\left([0, T] ; L_\sigma^2(\Omega)\right) \cap L^2\left(0, T ; H_{0, \sigma}^1(\Omega)\right), \\
& Q_{\varepsilon} \in B C_w\left([0, T] ; H^1\left(\Omega ; \Q\right)\right) \cap L^2\left(0, T ; H^2\left(\Omega ; \Q\right)\right).
\end{align*}
ii) For any $\boldsymbol{\eta} \in C^1\left([0, T] ; H_{0, \sigma}^1(\Omega) \cap W^{1, \infty}\left(\Omega ; \mathbb{R}^d\right)\right)$ and $\Psi \in C^1\left([0, T] ; H^1\left(\Omega ; \Q\right)\right)$, it holds that
\begin{align*} 
&\int_{\Omega} \boldsymbol{\eta} \cdot \mathbf{v}_\varepsilon \;\mathrm{d} x\Big|_{t=0}^{t=T}+ \int_0^T \!\int_{\Omega} -\partial_t\boldsymbol{\eta} \cdot \mathbf{v}_\varepsilon -\mathbf{v}_\varepsilon \otimes \mathbf{v}_\varepsilon :\nabla \boldsymbol{\eta} \;\mathrm{d} x \mathrm{d} t +\int_0^T \!\int_{\Omega}  \nabla \mathbf{v}_\varepsilon: \nabla \boldsymbol{\eta} \;\mathrm{d} x \mathrm{d} t \\
=&\varepsilon \int_0^T \!\int_{\Omega} \nabla Q_{\varepsilon} \odot \nabla Q_{\varepsilon}:\nabla \boldsymbol{\eta}\;\mathrm{d} x \mathrm{d} t
\end{align*}
and
\begin{align*}
&\int_{\Omega} Q_{\varepsilon}: \Psi \mathrm{~d} x \Big|_{t=0}^{t=T} -\int_0^T \!\int_{\Omega} Q_{\varepsilon}: \partial_t \Psi \, \mathrm{d} x \mathrm{d} t+\int_0^T \!\int_{\Omega} (\mathbf{v}_\varepsilon \cdot \nabla) Q_{\varepsilon}: \Psi \, \mathrm{d} x \mathrm{d} t \\
& =\int_0^T \!\int_{\Omega} (\Delta Q_{\varepsilon}-\frac{1}{\varepsilon^{2}} D F (Q_{\varepsilon})): \Psi \, \mathrm{d} x \mathrm{d} t .
\end{align*}
iii) For almost every $T$ the following energy inequality is valid:
\begin{eqnarray} \label{energy inequality}
\begin{split}
&\frac{1}{2} \int_{\Omega} |\mathbf{v}_\varepsilon(T)|^2\mathrm{d} x +\int_{\Omega} \left[\frac{\varepsilon}{2}|\nabla Q_{\varepsilon}|^2+ \frac{1}{\varepsilon}F_\e (Q_{\varepsilon})\right](T)\, \mathrm{d} x\\
&+\int_0^T \!\int_{\Omega} |\nabla \mathbf{v}_\varepsilon|^2 \mathrm{d} x \, \mathrm{d} t+\int_0^T \!\int_{\Omega} \frac{1}{\varepsilon} \Big(\varepsilon \Delta Q_{\varepsilon}-\frac{1}{\varepsilon} D F_\e (Q_{\varepsilon})\Big)^{2} \mathrm{d} x \, \mathrm{d} t\\
\le&\frac{1}{2} \int_{\Omega} |\mathbf{v}_\varepsilon(0)|^2\mathrm{d} x+\int_{\Omega} \left[\frac{\varepsilon}{2}|\nabla Q_{\varepsilon}|^2+ \frac{1}{\varepsilon}F_\e (Q_{\varepsilon})\right](0)\, \mathrm{d} x.
\end{split}
\end{eqnarray}
\end{definition}
\cite{Abels-Dolzmann-Liu} establishes the existence of solutions under more general Dirichlet–Neumann boundary conditions. The existence here only uses the specific case presented in Remark 1.4 of their article.

After this, we provide the definition of strong solutions for the sharp interface limit model.

\begin{definition} \label{strong solutions}
$(\mathbf{v},\Gamma)$ is called a strong solution for the system of equations (\ref{3.3.1}), it satisfies the following conditions:\\
i) It holds
$$
\begin{cases}
\mathbf{v} \in H^1(0,T;L^2(\Omega)) \cap L^2(0,T;H^1(\Omega)),\\
\mathbf{v} \in W^{1, \infty}\left([0, T] ; W^{1, \infty}(\Omega)\right) \cap C_t^1 C_x^0(\bar{\Omega} \times[0, T] \backslash \Gamma) \cap C_t^0 C_x^2(\bar{\Omega} \times[0, T] \backslash \Gamma).
\end{cases}
$$
ii) The velocity field $\mathbf{v}$ simultaneously fulfills the conditions $\div \mathbf{v}(\cdot, t)=0$ and the momentum balance equation in the distributional sense. Specifically, for all $\boldsymbol{\eta} \in C_c^{\infty}(\Omega \times[0, T])$ satisfying $\div \boldsymbol{\eta}=0$, one has
\begin{small}
\begin{align*}
& \int_{\Omega} \mathbf{v} \cdot \boldsymbol{\eta} \,\mathrm{d} x \Big|_{t=0}^{t=T} \\
= &\int_0^T \int_{\Omega} \mathbf{v} \cdot \partial_t \boldsymbol{\eta}  \,\mathrm{d} x  \,\mathrm{d} t -\int_0^T (\mathbf{v}\cdot \nabla) \mathbf{v} \cdot \boldsymbol{\eta} \,\mathrm{d} x  \,\mathrm{d} t -\int_0^T \int_{\Omega} \nabla \mathbf{v}: \nabla \boldsymbol{\eta}  \,\mathrm{d} x  \,\mathrm{d} t-\sigma \int_0^T \int_{\Gamma_t} H_{\Gamma_{t}} \mathbf{n}_{\Gamma_{t}} \cdot \boldsymbol{\eta} \,\mathrm{d} \mathcal{H}^2\,\mathrm{d} t
\end{align*}
\end{small}
holds true for almost every $T \in(0, T_0)$.

For this definition, one can also refer to \cite[Definition 4]{Sebastian Hensel} for more details. And the existence of the solution can be found in \cite{Abels-Moser, Sebastian Hensel}.
\end{definition}

We are going to introduce the concepts that appeared in relative entropy \eqref{entropy} and bulk error \eqref{area diff} in the following. The signed distance function is defined as follows:
$$
d_{\Gamma}(x, t)\triangleq \operatorname{sdist}(x,\Gamma_{t})=
\begin{cases}
\operatorname{dist}\left(\Omega^{-}(t), x\right) & \text { if } x \notin \Omega^{-}(t), \\
-\operatorname{dist}\left(\Omega^{+}(t), x\right) & \text { if } x \in \Omega^{-}(t).\end{cases}
$$
And we choose a suitable $\delta>0$ which satisfies the condition that the distance between the interface $\Gamma_t$ and $\partial \Omega$ is at least $3\delta$.
Furthermore, 
$$
\Gamma_t(3\delta)\triangleq \{y\in \Omega: \hbox{dist}(y, \Gamma_t)<3\delta\} \text{ and } \Gamma(3\delta)=\bigcup_{t\in (0, T_0)}\Gamma_t(3\delta)\times\{t\}.
$$

For every point $x \in \Gamma_{t}$, there exists a local diffeomorphisms $X_{0}: \mathbb{T}^{2} \times (0, T_{0}) \rightarrow \Gamma_{t}$. For any $x=X_{0}(s, t) \in \Gamma_{t}$, we denote
$$
\mathbf{n}_{\Gamma_{t}}(x, t)\triangleq \mathbf{n}(s, t).
$$
Then we have 
\begin{eqnarray} \label{9.19.1}
\nabla d_{\Gamma}(x, t)=\mathbf{n}_{\Gamma_{t}}\left(P_{\Gamma_{t}}(x), t\right),\quad \partial_{t} d_{\Gamma}(x, t)=-V_{\Gamma_{t}}\left(P_{\Gamma_{t}}(x), t\right),\quad \Delta d_{\Gamma}(p, t)=-H_{\Gamma_{t}}(p, t)
\end{eqnarray}
holds for $\forall (x,t)\in \Gamma(3 \delta)$ and $(p,t)\in \Gamma$, where $P_{\Gamma_{t}}(x)$ is the orthogonal projection (cf. \cite[Section 4.1]{Chen X}).

Next, we extend the mean curvature vector $H_{\Gamma_t}$ on $\Gamma_{t}$ to the entire region $\Omega$.
\begin{definition}
The extended mean curvature vector, denoted as $\mathbf{H}(x,t)$, is defined by
\begin{align} \label{2.16.1}
\mathbf{H}(x,t) \triangleq  H_{\Gamma_{t}}(P_{\Gamma_{t}}(x),t)\mathbf{n}_{\Gamma_{t}}(x,t)\zeta(x,t),
\end{align}
where $x=P_{\Gamma_{t}}(x)+d_{\Gamma}(x,t) \mathbf{n}_{\Gamma_{t}}(x,t)$ and $\zeta$ is a cut-off function taking the form
\begin{align*}
\zeta(\cdot,t) \in C^\infty_c(\Gamma_t(2\delta))\quad  \text{ and } \quad  \zeta(\cdot,t)=1 \quad \text{ in } \Gamma_t(\delta).
\end{align*}
\end{definition}
It can be directly observed from the definition that there exists $C=C(\Gamma_0)$ such that
\begin{align}
\left|\HH\right|+\left|\nabla \HH\right| \leq C  \label{7.25.1}.
\end{align}

The constant extension of $\mathbf{v}$ away from $\Gamma_{t}$ is defined as follows:
\begin{align} \label{2.23.2}
\mathbf{\tilde{v}}(x,t) \triangleq \mathbf{v}(P_{\Gamma_{t}}(x),t)  \quad \text{ in }\ \Gamma_{t}(3\delta).
\end{align}
It follows from \eqref{9.17.23} and \eqref{9.18.1} that
\begin{align} \label{regular tilde v}
\tilde{\vv}\in C_t^0C_x^2(\Gamma(3\delta))\cap C_t^1C_x^0(\Gamma(3\delta)).
\end{align}
Based on Lipschitz condition in \eqref{9.18.1}, we know that there exists a non-negative bounded function $\omega(t)$ such that 
\begin{align} \label{lipschitz uu}
|\vv(x,t)-\tilde{\vv}(x,t)|\leq \omega(t) |\dd_\Gamma(x,t)|
\end{align}
holds.

The non-negativity of relative entropy largely depends on the choice of $\boldsymbol{\xi}$. $\boldsymbol{\xi}$ is defined as an extension of the unit inner normal vector to $\Omega^+(t)$ and is given by:
\begin{align} \label{2.27.1}
\boldsymbol{\xi}(x, t)\triangleq \phi\left(\frac{\mathrm{d}_{\Gamma}(x, t)}{\delta}\right) \nabla \mathrm{d}_{\Gamma}(x, t),
\end{align}
where $\phi(x) \geq 0$ is an even, nonnegative cutoff function  defined on $\mathbb{R}$. It monotonically decreases on $[0, 1]$ and satisfies 
$$
\begin{cases}\phi(x)>0\quad \text { for } & |x|<1, \\ \phi(x)=0\quad \text { for } & |x| \geq 1, \\ 1-4 x^2 \leq \phi(x) \leq 1-\frac{1}{2} x^2 & \text { for }|x| \leq 1 / 2.\end{cases}
$$
The last condition guarantees $\phi'(x)\sim O(x)$ in the interval $[-\frac{1}{2}, \frac{1}{2}]$. Therefore, for some constant $C$, it is evident that we have 
\begin{align}
&\boldsymbol{\xi} \in C^{0,1}\left([0, T] ; L^{\infty}(\Omega)\right) \cap L^{\infty} \left([0, T] ; C_c^{1,1}(\Omega)\right), \quad  \left\|(\partial_t \boldsymbol{\xi}, \nabla^2 \boldsymbol{\xi})\right\|_{L^{\infty}(\Omega \times(0, T))} \leq C , \label{2.28.1}
\end{align}
and
\begin{align}
&|\boldsymbol{\xi}|  \leq 1-C \min \left\{d_{\Gamma}^2, 1\right\} & & \text { a.e. on } \Omega \times[0, T], \notag\\
&\boldsymbol{\xi}  = \mathbf{n}_{\Gamma_{t}} \text { and } \div \boldsymbol{\xi}=- H_{\Gamma_{t}} & & \text { on } \Gamma_{t} ,\label{2.18.2}\\
&(\boldsymbol{\xi} \cdot \nabla)\mathbf{H}=0, \quad (\boldsymbol{\xi} \cdot \nabla)\mathbf{\tilde{v}}=0 \quad &&\text{ in } \Omega \label{2.23.1}.
\end{align}

In fact,  it holds that 
\begin{align} \label{trans d}
\p_t d_\Gamma(x,t)+(\HH+\tilde{\vv})\cdot \nabla d_\Gamma(x,t) =0 \qquad \text{in } \Gamma_t(\delta). 
\end{align}
Then, according to \eqref{9.19.1} and \eqref{2.16.1}, we can derive(cf. \cite{Sebastian Hensel}) 
\begin{align}
&|\mathbf{H} \cdot \boldsymbol{\xi}+\div \boldsymbol{\xi}|  \leq C \min \{d_{\Gamma}, 1\} & \text { a.e. on } \Omega \times[0, T], \label{2.20.1}\\
&\left|\partial_t \boldsymbol{\xi}+\left((\HH+\vv) \cdot \nabla\right) \bxi +\left(\nabla \HH+\nabla \tilde{\vv}\right)^{\top} \boldsymbol{\xi} \right| \leq C \min \{d_{\Gamma}, 1\} & \text { a.e. on } \Omega \times[0, T], \label{2.16.5}\\
&\left|\boldsymbol{\xi} \cdot\Big(\partial_t+\left((\HH+\vv) \cdot \nabla\right)\Big) \boldsymbol{\xi}\right| \leq C \min \left\{d_{\Gamma}^2, 1\right\} & \text { a.e. on } \Omega \times[0, T]  \label{2.16.7}.
\end{align}

Now, let us give the definitions of $\phi_{\ve}$ and $d^F$, both of which appear in the area differences \eqref{area diff}. Recall that
\begin{align} \label{psi}
\psi_\ve(x,t)&\triangleq d^F_\ve\circ Q_\ve(x,t),\quad\text{and}~ d^F_\ve(q) \triangleq (\phi_{\ve} *d^F)(q),~\forall q\in\Q .
\end{align}
$\phi_{\ve}$ is a family of smooth, non-negative mollifiers with compact support in $B_1^\Q$ (the ball of radius 1 in the space $\Q$) given by
\begin{equation}\label{psi convolu}
\phi_{\ve}(q)\triangleq \ve^{-20}\phi\(  \ve^{-4}q\).
\end{equation}
Additionally, $\phi$ is isotropy, which means that for any orthogonal matrix $M\in O(3)$ and any $q\in \Q$, it satisfies $\phi(M^\top qM)=\phi(q)$. And
\begin{equation}\label{quasidistance}
d^F(q)\triangleq \inf\left\{\int_0^1\sqrt{2 F(\gamma(t))}|\gamma'(t)|\, dt \Big|  \gamma\in C^{0,1}([0,1];\Q),\gamma(0)\in \N,\gamma(1)=q\right\}
\end{equation}
is the quasi-distance function, which was introduced by  \cite{Sternberg}  and independently by \cite{MR985992}. One  can also refer to \cite{liu}.
The definition of $\phi_{\ve}$ in \eqref{psi convolu} is motivated by technical challenges encountered in the proof of Lemma \ref{basic control est}.

Let us consider a special but crucial case: when the $Q$-tensor is uniaxial, one can obtain a specific expression for $d^F$.
\begin{lemma}\label{lemma unixial}
Let $f(s)$ be defined by \eqref{uni bulk}. If $Q$ is represented as $Q=s_0\left(\mathbf{u} \otimes \mathbf{u}-\frac{1}{3} I_3\right)$ for some $s_0\in [0,s_+]$ and a unit vector $\mathbf{u}  \in \BS$, then
\begin{equation}\label{precise diff}
 d^F(Q)= \frac 2{\sqrt{3}} \int^{s_+}_{s_0} \sqrt{ f(\tau) } \, d\tau \triangleq g(s_0).
\end{equation}
 \end{lemma}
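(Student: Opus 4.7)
The plan is to establish the equality by matching upper and lower bounds: the first by displaying an explicit competitor in \eqref{quasidistance}, the second by a calibration argument.

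For the upper bound I would use the straight line in the scalar order parameter with axis $\uu$ held fixed,
\[
\gamma(t)=\bigl(s_+ + t(s_0-s_+)\bigr)\Bigl(\uu\otimes\uu-\tfrac 13 I_3\Bigr),\qquad t\in[0,1],
\]
which is admissible since $\gamma(0)\in\N$ and $\gamma(1)=Q$. A direct calculation gives $|\uu\otimes\uu-\tfrac 13 I_3|^2=\tfrac 23$, hence $|\gamma'(t)|=|s_+-s_0|\sqrt{2/3}$, while $\gamma(t)$ remains uniaxial so that by \eqref{uni bulk} one has $F(\gamma(t))=f(s(t))$ with $s(t)=s_++t(s_0-s_+)$. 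Substituting into \eqref{quasidistance} and changing variables to $\tau=s(t)$ yields $\int_0^1\sqrt{2F(\gamma)}|\gamma'|\,dt=\tfrac{2}{\sqrt 3}\int_{s_0}^{s_+}\sqrt{f(\tau)}\,d\tau=g(s_0)$, giving $d^F(Q)\le g(s_0)$.

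For the lower bound the plan is to build a calibration $\Phi:\Q\to\R$ enjoying the three properties: (i) $|D\Phi(q)|\le\sqrt{2F(q)}$ on $\Q$, (ii) $\Phi\equiv 0$ on $\N$, and (iii) $\Phi(Q)=g(s_0)$ for our uniaxial $Q$. With such a $\Phi$ in hand, any admissible path $\gamma$ in \eqref{quasidistance} satisfies
\[
g(s_0)=\Phi(Q)-\Phi(\gamma(0))=\int_0^1 D\Phi(\gamma):\gamma'\,dt\le\int_0^1\sqrt{2F(\gamma)}|\gamma'|\,dt,
\]
and taking the infimum yields $d^F(Q)\ge g(s_0)$. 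Motivated by the identity $\lambda_{\max}(Q)=\tfrac 23 s_0$ for uniaxial $Q$ with $s_0\in[0,s_+]$, my candidate is
\[
\Phi(q):=\frac{2}{\sqrt 3}\int_{\frac 32\lambda_{\max}(q)}^{s_+}\sqrt{f(\tau)}\,d\tau,
\]
where $\lambda_{\max}(q)$ denotes the largest eigenvalue of $q$; properties (ii) and (iii) follow immediately from the definition and \eqref{precise diff}.

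The main obstacle is verifying the gradient bound (i). On the open set where the top eigenvalue is simple, $\Phi$ is smooth and a computation in the eigenbasis gives $D\Phi(q):\delta q=-\sqrt 3\sqrt{f(\tfrac 32\lambda_{\max}(q))}\,(\delta q)_{11}$, where $(\delta q)_{11}$ is the component of $\delta q$ along the top eigendirection. The elementary sharp bound $(\delta q)_{11}\le\sqrt{2/3}\,|\delta q|$ valid for all symmetric traceless $\delta q$ then gives $|D\Phi(q)|\le\sqrt{2f(\tfrac 32\lambda_{\max}(q))}$, and the remaining content reduces to the pointwise algebraic inequality
\[
F(q)\ge f\!\left(\tfrac 32\lambda_{\max}(q)\right)\qquad\text{for every}\ q\in\Q.
\]
Diagonalizing $q$ and parametrizing its two smaller eigenvalues as $-\lambda_{\max}/2\pm\mu$ turns the difference $F(q)-f(\tfrac 32\lambda_{\max}(q))$ into a polynomial of the form $\mu^2\bigl(a+b\lambda_{\max}+\tfrac{3c}{2}\lambda_{\max}^2\bigr)+c\mu^4$, whose non-negativity for all $\lambda_{\max}\ge 0$ is confirmed by the critical choice $a=3,b=9,c=1$ (i.e.\ $b^2=27ac$), since the quadratic factor has only negative roots. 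The non-smooth locus of $\lambda_{\max}$, namely tensors with a repeated largest eigenvalue (in particular $q=0$), is handled by Lipschitz approximation of $\Phi$ in the spirit of \cite{Sternberg,liu}, preserving all three properties in the almost-everywhere sense needed for the chain-rule argument above.
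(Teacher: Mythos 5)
Your proposal is correct and implements the standard strategy for this type of identity: an explicit competitor establishes the upper bound, and a calibration establishes the lower bound. The paper itself supplies no argument and simply defers to \cite{Park}; your proof reconstructs the essential content of that reference. All the key computations check out: $|\uu\otimes\uu-\tfrac13 I_3|^2=\tfrac23$ and the change of variables $\tau=s(t)$ give exactly $g(s_0)$ for the straight radial path; for uniaxial $Q$ with $s_0\in[0,s_+]$ one has $\lambda_{\max}(Q)=\tfrac{2}{3}s_0$, so $\Phi(Q)=g(s_0)$ and $\Phi\equiv0$ on $\N$; the eigenvalue-perturbation formula $D\lambda_{\max}(q):\delta q=e_1^\top\delta q\,e_1$ (for a simple top eigenvalue) together with the Cauchy--Schwarz bound $|\langle e_1\otimes e_1-\tfrac13I_3,\delta q\rangle|\le\sqrt{2/3}\,|\delta q|$ yields $|D\Phi(q)|\le\sqrt{2f(\tfrac32\lambda_{\max}(q))}$; and the algebraic inequality reduces, after diagonalization with $\lambda_2=-\lambda_{\max}/2+\mu$, $\lambda_3=-\lambda_{\max}/2-\mu$, to
\[
F(q)-f\!\left(\tfrac32\lambda_{\max}(q)\right)=\mu^2\!\left(a+b\lambda_{\max}+\tfrac{3c}{2}\lambda_{\max}^2\right)+c\,\mu^4,
\]
which is nonnegative for $\lambda_{\max}\ge0$ since, with $a=3,\ b=9,\ c=1$, the quadratic factor $\tfrac32(\lambda^2+6\lambda+2)$ has roots $-3\pm\sqrt{7}<0$ and positive leading coefficient.

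The one place that deserves more care is the chain-rule step through the non-smooth locus of $\lambda_{\max}$. You acknowledge this and gesture at Lipschitz approximation; the clean fix is to mollify: set $\Phi_\rho=\Phi*\phi_\rho$. Since $\Phi$ is locally Lipschitz and the codimension-two set where the top eigenvalue is repeated has measure zero, one has $|D\Phi_\rho(q)|\le\operatorname{esssup}_{|y-q|<\rho}|D\Phi(y)|\le\sup_{|y-q|<\rho}\sqrt{2F(y)}$ for \emph{every} $q$. Applying the smooth calibration $\Phi_\rho$ to any admissible Lipschitz path $\gamma$ and then letting $\rho\to0$ (using continuity of $F$, continuity of $\Phi$, and $\Phi(\gamma(0))=0$) gives $g(s_0)=\Phi(Q)\le\int_0^1\sqrt{2F(\gamma)}\,|\gamma'|\,dt$ with no regularity issues. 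Spelling this out would close the only gap; as written the argument is otherwise complete and matches the intended approach.
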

\begin{proof}
This proof is similar to the one in \cite{Park}, so we won't provide details here.
\end{proof}

It is obvious from \eqref{psi} that
\begin{align} 
 \nabla \psi_\ve(x,t) & =  D d^F_\ve(Q_\ve) \colon \nabla Q_\ve(x,t) \qquad \text{for a.e.\ }(x,t)\in \Omega\times (0,T),\label{ADM chain rule}\\
\p_t \psi_\e(x,t) &=  D d^F_\e(Q_\e) \colon \p_t Q_\e(x,t) \qquad \text{for a.e.\ }(x,t)\in \Omega\times (0,T) \label{8.11.7}.
\end{align}

The inspiration from \eqref{ADM chain rule} leads  us to define a projection operator $\Pi_{Q_\ve}$:
\begin{equation} \label{projection1}
\Pi_{Q_\ve}  \p_i Q_\ve=\left\{
\begin{array}{rl}
\(\p_i Q_\ve:\frac{D d^F_\ve(Q_\ve)}{|D d^F_\ve(Q_\ve)|}\) \frac{D d^F_\ve(Q_\ve)}{|D d^F_\ve(Q_\ve)|},&~\text{if}~D d^F_\ve(Q_\ve)\neq 0,\\
0,&~\text{otherwise}.
\end{array}
\right.
\end{equation}
Thus, we obtain from \eqref{ADM chain rule} that
\begin{subequations}
\begin{align}
|\nabla \psi_\ve| &= |\Pi_{Q_\ve} \nabla Q_\ve| |D d^F_\ve(Q_\ve)| \qquad \qquad \text{for a.e.\ }(x,t)\in \Omega\times (0,T), \label{projectionnorm}\\
\Pi_{Q_\ve} \nabla Q_\ve&=\frac{|\nabla\psi_\ve|} {|D d^F_\ve(Q_\ve)|^2}D d^F_\ve(Q_\ve)\otimes \nn_\ve   \qquad \text{for a.e.\ }(x,t)\in \Omega\times (0,T), \label{projection}
\end{align}
\end{subequations}
where 
\begin{align}
\mathbf{n}_\varepsilon &\triangleq \frac{\nabla \psi_\varepsilon}{|\nabla \psi_\varepsilon|}
\end{align}
is the analogous normal vector.

The following lemma provides control over the gradient of the convolution $d_\ve^F$, which is crucial for closing the estimates of  the energy of relative entropy.
\begin{lemma}\label{basic control est}
For any fixed $c_0>0$, there is a corresponding  $\ve_0 >0$ such that  if $q\in\Q$ and $|q|\leq c_0$, then the following inequality holds for all $\ve \in (0, \ve_0)$:
\begin{align} \label{sharp lip d}
|D d^F_\ve(q)| \leq \sqrt{2 F_\ve(q)}.
\end{align}
\end{lemma}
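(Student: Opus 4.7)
The plan is to deduce \eqref{sharp lip d} from the classical almost-everywhere bound $|Dd^F(q)|\leq \sqrt{2F(q)}$ for the unmollified quasi-distance, then transfer this estimate through the convolution using Jensen's inequality and a second-order Taylor expansion of $F$ in which the first-order term drops out thanks to the $O(3)$-isotropy of $\phi$.

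First I would verify that $d^F$ is locally Lipschitz on $\Q$: directly from \eqref{quasidistance}, joining any two points by the line segment shows that the Lipschitz constant on $\{|q|\leq R\}$ is bounded by $\sup_{|q|\leq R}\sqrt{2F(q)}$, and Rademacher's theorem yields differentiability a.e. At a differentiability point $q$ and for an arbitrary unit $v\in\Q$, concatenating a near-optimal path from $\N$ to $q$ with the segment $[q,q+hv]$ (and arguing symmetrically for the reverse inequality) gives
\begin{align*}
|d^F(q+hv)-d^F(q)|\leq \int_0^h \sqrt{2F(q+tv)}\,dt,
\end{align*}
from which dividing by $h$ and letting $h\to 0$ produces $|Dd^F(q)|\leq \sqrt{2F(q)}$ for almost every $q$. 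Since $d^F\in W^{1,\infty}_{\mathrm{loc}}(\Q)$, differentiation commutes with the convolution, so $Dd^F_\ve=\phi_\ve * Dd^F$, and Jensen's inequality applied to the convex function $|\cdot|^2$ on $\Q$ combined with the previous step yields
\begin{align*}
|Dd^F_\ve(q)|^2 \leq \int_\Q \phi_\ve(q-q')|Dd^F(q')|^2\,dq' \leq 2\int_\Q \phi_\ve(q-q')F(q')\,dq' = 2(\phi_\ve * F)(q).
\end{align*}

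It therefore remains to show $(\phi_\ve * F)(q)\leq F(q)+\ve^3$ for small $\ve$. The isotropy $\phi(M^\top qM)=\phi(q)$ for every $M\in O(3)$ forces the first moment $\int_\Q \phi_\ve(q')q'\,dq'\in \Q$ to be invariant under the conjugation action of $O(3)$ on $\Q$; such a matrix must be a scalar multiple of $I_3$, and tracelessness makes it vanish. Taylor-expanding the quartic polynomial $F$ at $q$ gives $F(q')=F(q)+DF(q):(q'-q)+R(q,q')$ with $|R(q,q')|\leq C(c_0)|q'-q|^2$ whenever $|q|\leq c_0$ and $|q'|\leq c_0+1$. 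Since $\mathrm{supp}\,\phi_\ve\subset\{|q|\leq \ve^4\}$, for $\ve\leq 1$ the relevant $q'$ lie in that ball, the linear term integrates to zero, and
\begin{align*}
(\phi_\ve * F)(q)-F(q)\leq C(c_0)\int_\Q \phi_\ve(q-q')|q'-q|^2\,dq'\leq C(c_0)\,\ve^8.
\end{align*}
Setting $\ve_0\triangleq \min\{1,C(c_0)^{-1/5}\}$ ensures $C(c_0)\ve^8\leq \ve^3$ for every $\ve\in(0,\ve_0)$, and the two displayed inequalities assemble to yield \eqref{sharp lip d}.

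The main obstacle is the precise bookkeeping of scales in the last step: the exponent $20$ in the mollifier scaling \eqref{psi convolu} is engineered so that the $\ve^8$ gain coming from the radius-squared of $\mathrm{supp}\,\phi_\ve$ dominates the $\ve^3$ cushion built into $F_\ve$ in \eqref{new bulk}. The $O(3)$-isotropy of $\phi$ is indispensable for killing the first-order moment, since without it the linear Taylor term would contribute an $\ve^4$ error that the $\ve^3$ cushion could not absorb; this explains why \eqref{psi convolu} is stated on $\Q$ (rather than factor by factor on the entries) and why \eqref{new bulk} uses the exponent $3$ rather than, say, $2$.
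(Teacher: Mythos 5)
Your proposal is a complete and correct proof. The paper itself does not supply an argument here — it defers to \cite[Lemma 4.1]{liu} and only remarks that the maximum principle guarantees $Q_\varepsilon$ lands in the ball where the lemma applies — so there is no in-paper proof to compare against. Your route (a.e.\ bound $|Dd^F|\leq\sqrt{2F}$ from concatenating near-optimal paths with short segments, Jensen through the convolution, then Taylor expansion with the linear moment killed and the quadratic remainder bounded by $C(c_0)\varepsilon^8$) is exactly the argument the mollifier scaling $\varepsilon^{-20}\phi(\varepsilon^{-4}\cdot)$ and the $\varepsilon^3$ cushion in $F_\varepsilon$ are engineered to support, and each step checks out: $\phi_\varepsilon$ integrates to $1$ because $20=5\cdot 4$ matches the dimension of $\Q$, the first moment vanishes by the $O(3)$-equivariance/Schur argument, and $\varepsilon_0=\min\{1,C(c_0)^{-1/5}\}$ does the job.

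One inaccuracy in your closing commentary, which does not affect the proof: you assert the isotropy of $\phi$ is \emph{indispensable} because otherwise the linear Taylor term would contribute an $\varepsilon^4$ error ``that the $\varepsilon^3$ cushion could not absorb.'' In fact $\varepsilon^4 \ll \varepsilon^3$ as $\varepsilon\to 0$, so with this particular scaling the linear term would still be absorbed by the cushion after shrinking $\varepsilon_0$; isotropy is a clean way to kill the linear term outright, and it would be genuinely needed if the support radius were larger than $\varepsilon^3$, but it is not forced by the choices actually made in \eqref{psi convolu} and \eqref{new bulk}.
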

\begin{proof}
The proof relies on the maximum principle established in \eqref{basic constant1}, which will be showed in the next lemma. The proof of this lemma has been established in \cite[Lemma 4.1]{liu} and will not be reiterated here. 
\end{proof}

\begin{lemma} \label{L infinity bound}
(Maximum Principle) Let the pair $(\mathbf{v}_{\varepsilon},Q_{\varepsilon})$ be a global weak solution of the system of equations (\ref{9.14.24}) in the sense of Definition \ref{weak solutions} on a time interval $[0, T_1]$. Choose $c_0 > 0$ to be sufficiently large, depending only on the coefficients $(a, b, c)$ of the function $F(Q)$ and $\|Q_{0, \varepsilon}\|_{\mathrm{L}^{\infty}(\Omega)}$. More precisely, $c_0$ is independent of time and satisfies
$$
c_0^2 \geq max\{ \frac{b^2}{c^2}-\frac{2 a}{c}, \|Q_{0, \varepsilon}\|_{\mathrm{L}^{\infty}(\Omega)}^2\},
$$
then the uniform bound of $Q_\ve$ is given by
$$
\|Q_\ve\|_{L^\infty(\Omega\times(0,T_1))} \leq c_0 .
$$
\end{lemma}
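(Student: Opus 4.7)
The plan is to derive a scalar parabolic inequality for $\phi := |Q_\ve|^2$ and then apply a Stampacchia-type truncation to rule out values above $c_0^2$. First, I take the Frobenius inner product of \eqref{10.19.1} with $Q_\ve$. Using the pointwise identity $Q_\ve : \Delta Q_\ve = \tfrac12 \Delta|Q_\ve|^2 - |\nabla Q_\ve|^2$, this produces
\begin{equation*}
\p_t \phi + (\vv_\ve \cdot \nabla)\phi - \Delta \phi = -2|\nabla Q_\ve|^2 - \tfrac{2}{\ve^2}\, Q_\ve : DF(Q_\ve),
\end{equation*}
with Dirichlet datum $\phi|_{\p \Omega} = 0$ inherited from \eqref{bc of omega}. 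The gradient term on the right is manifestly non-positive, so everything reduces to the algebraic structure of $Q_\ve : DF(Q_\ve)$.

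The tracelessness of $Q_\ve$ combined with \eqref{pF} gives
\begin{equation*}
Q_\ve : DF(Q_\ve) = a|Q_\ve|^2 - b\,\tr(Q_\ve^{3}) + c|Q_\ve|^4,
\end{equation*}
since the $\tfrac{b}{3}|Q_\ve|^2 I_3$ piece vanishes against $\tr(Q_\ve) = 0$. The eigenvalue bound $|\lambda_i| \leq |Q_\ve|$ yields the pointwise inequality $|\tr(Q_\ve^3)| \leq |Q_\ve|^3$, hence
\begin{equation*}
Q_\ve : DF(Q_\ve) \geq |Q_\ve|^2 \bigl(c|Q_\ve|^2 - b|Q_\ve| + a\bigr).
\end{equation*}
A direct check shows that $c\phi + a \geq b\sqrt{\phi}$ (squared form: $c^2\phi^2 + 2ac\phi + a^2 \geq b^2\phi$) whenever $\phi \geq (b^2 - 2ac)/c^2$, which is exactly the hypothesis $c_0^2 \geq b^2/c^2 - 2a/c$. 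Consequently $Q_\ve : DF(Q_\ve) \geq 0$ on the set $\{\phi \geq c_0^2\}$.

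I would then execute the Stampacchia step by testing the scalar equation against $(\phi - c_0^2)_+$: the advective term vanishes because $\div \vv_\ve = 0$ together with $(\phi - c_0^2)_+|_{\p\Omega} = 0$ (via \eqref{bc of omega}), the diffusive term integrates by parts to $-\int_\Omega |\nabla(\phi - c_0^2)_+|^2\,\dd x \leq 0$, and the $DF$-contribution is non-positive on the support of $(\phi - c_0^2)_+$ by the preceding step. This yields
\begin{equation*}
\tfrac{1}{2}\tfrac{\dd}{\dd t}\int_\Omega (\phi - c_0^2)_+^2 \,\dd x \leq 0,
\end{equation*}
and the initial hypothesis $\|Q_{0,\ve}\|_{L^\infty}^2 \leq c_0^2$ forces $(\phi - c_0^2)_+ \equiv 0$ on all of $[0,T_1]$, which is exactly the claimed $L^\infty$ bound.

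The main obstacle is regularity: for a merely weak solution, the pointwise identity relating $Q_\ve:\Delta Q_\ve$ to $\Delta|Q_\ve|^2$ and the chain rule against the Lipschitz truncation $(\cdot - c_0^2)_+$ are not automatically rigorous. I would handle this by performing the computation at the level of a Galerkin or mollified approximation—where everything is literal—and then passing to the limit. Alternatively, Definition \ref{weak solutions} together with \eqref{10.19.1} and the energy inequality \eqref{energy inequality} already supplies $Q_\ve \in L^2(0,T;H^2) \cap H^1(0,T;L^2)$, which suffices to justify the testing procedure once one checks integrability of the transport term $(\vv_\ve \cdot \nabla)\phi \cdot (\phi - c_0^2)_+$ via $\vv_\ve \in L^2_tH^1_x$ and $\nabla\phi \in L^2_tL^2_x$.
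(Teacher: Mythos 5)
Your proof is correct and follows essentially the same route as the paper: take the Frobenius inner product of the $Q$-equation with $Q_\ve$ to obtain a scalar parabolic equation for $|Q_\ve|^2$, test against the Stampacchia truncation $(|Q_\ve|^2-c_0^2)_+$, kill the transport and diffusion terms via $\div\vv_\ve=0$ and integration by parts, and conclude from a pointwise algebraic lower bound on $Q_\ve:DF(Q_\ve)$. The one genuine (though minor) deviation is how you handle the cubic term: the paper applies Young's inequality to $b\,\tr Q_\ve^3\leq b|Q_\ve|^3\leq\tfrac{c}{2}|Q_\ve|^4+\tfrac{b^2}{2c}|Q_\ve|^2$ to obtain the clean factorization $Q_\ve:DF(Q_\ve)\geq\tfrac{c}{2}|Q_\ve|^2(|Q_\ve|^2-\mu^2)$ with $\mu^2=\tfrac{b^2}{c^2}-\tfrac{2a}{c}$, which is manifestly non-negative on $\{|Q_\ve|^2\geq\mu^2\}$; you instead use the eigenvalue bound $|\tr Q_\ve^3|\leq|Q_\ve|^3$ directly and verify non-negativity of the resulting quadratic $c\phi-b\sqrt{\phi}+a$ via a discriminant argument. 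Both yield the same threshold $\mu^2$; your route is slightly more elementary and actually shows the lower bound kicks in a bit earlier (at $t_+^2=\mu^2-\tfrac{2a^2}{b^2}(b^2-4ac)^{-1/2}\cdots<\mu^2$), which is harmless but unnecessary since the lemma only asserts the result for $c_0^2\geq\mu^2$. You also explicitly flag and address the regularity needed to justify the testing procedure for weak solutions; the paper's proof elides this, so your remark is a welcome completion rather than a defect.
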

\begin{proof}
The proof is similar to that of \cite[Theorem 3]{Guillen-Gonzalez}. For the sake of completeness in this paper, we provide the proof here. Upon taking the inner product of \eqref{10.19.1} with $Q_{\varepsilon}$, we obtain:
\begin{align} \label{54}
\frac{1}{2} \partial_t\left(|Q_{\varepsilon}|^2\right)+\mathbf{v}_{\varepsilon} \cdot \nabla\left(\frac{|Q_{\varepsilon}|^2}{2}\right)- \frac{1}{2} \Delta\left(|Q_{\varepsilon}|^2\right)+ \frac{1}{2} |\nabla Q_{\varepsilon}|^2+\frac{1}{\e^2} D F(Q_{\varepsilon}): Q_{\varepsilon}=0,
\end{align}
which implies
\begin{align} \label{55}
\partial_t\left(|Q_{\varepsilon}|^2-c_0^2 \right)+\mathbf{v}_{\varepsilon} \cdot \nabla\left(|Q_{\varepsilon}|^2-c_0^2 \right)-  \Delta\left(|Q_{\varepsilon}|^2-c_0^2 \right) +\frac{2}{\e^2} D F(Q_{\varepsilon}): Q_{\varepsilon}=0.
\end{align}
Testing \eqref{55} by $(|Q_{\varepsilon}|^2-c_0^2)_+ $ and integrating in $\Omega$, we deduce that
\begin{align} \label{56}
\frac{d}{d t} \big\|(|Q_{\varepsilon}|^2-c_0^2)_+ \big\|_{L^2(\Omega)}^2+ \big\|\nabla (|Q_{\varepsilon}|^2-c_0^2)_+ \big\|_{L^2(\Omega)}^2 +\frac{2}{\e^2} \int_{\Omega}(D F(Q_{\varepsilon}): Q_{\varepsilon}) (|Q_{\varepsilon}|^2-c_0^2)_+ \;\mathrm{d}x \leq 0 .
\end{align}
From definition of $D F(Q)$, we know that
$$
D F(Q_{\varepsilon}): Q_{\varepsilon}=a|Q_{\varepsilon}|^2-b \,tr Q_{\varepsilon}^3 +c|Q_{\varepsilon}|^4.
$$
It follows from Young's inequality that
$$
b \,tr Q_{\varepsilon}^3 \leq \frac{c}{2}|Q_{\varepsilon}|^4+\frac{b^2}{2 c}|Q_{\varepsilon}|^2,
$$
so there holds
$$
D F(Q_{\varepsilon}): Q_{\varepsilon} \geq \frac{c}{2}|Q_{\varepsilon}|^4+\left(a-\frac{b^2}{2 c}\right)|Q_{\varepsilon}|^2=\frac{c}{2}|Q_{\varepsilon}|^2\left(|Q_{\varepsilon}|^2-\mu^2 \right),
$$
where $\mu^2=\frac{b^2}{c^2}-\frac{2 a}{c}$. If $|Q_{\varepsilon}|^2 \le \mu^2$, then the desired estimate is obtained. If not, we have:
$$
(D F(Q_{\varepsilon}): Q_{\varepsilon}) (|Q_{\varepsilon}|^2-c_0^2)_+ \geq \frac{c}{2}|Q_{\varepsilon}|^2\left(|Q_{\varepsilon}|^2-\mu^2 \right)(|Q_{\varepsilon}|^2-c_0^2)_+ \geq 0 .
$$
In this case, according to \eqref{56}, one can conclude that
$$
\frac{d}{d t} \big\|(|Q_{\varepsilon}|^2-c_0^2)_+ \big\|_{L^2(\Omega)}^2+ \big\|\nabla (|Q_{\varepsilon}|^2-c_0^2)_+ \big\|_{L^2(\Omega)}^2 \leq 0.
$$
Hence, the weak maximum principle yields that the maximum must be attained on the parabolic boundary $\(\p\O\times (0,T)\) \cup \(\Omega\times \{0\}\)$, that is
$$
\big\|(|Q_{\varepsilon}|^2-c_0^2)_+ \big\|_{L^2(\Omega)}^2 \leq \big\| (|Q_{0, \varepsilon}|^2-c_0^2)_+ \big\|_{L^2(\Omega)}^2=0.
$$
Therefore, $\|Q_{\varepsilon}(t)\|_{\mathrm{L}^{\infty}(\Omega)} \leq c_0$ is deduced.
\end{proof}

Finally, $\vartheta$ in \eqref{area diff} can be  formulated as a smooth asymmetric  truncation of the signed distance function, which takes the following form:
\begin{align} \label{24.1.15.20}
\vartheta(r)=\left\{
\begin{array}{rll}
-\delta & \text { as } & r \geq  \delta, \\
-r & \text { as } & -\delta \leq r \leq  \delta, \\
\delta & \text { as } & r \leq - \delta .
\end{array}\right.
\end{align}
It is evident from the construction and \eqref{trans d} that $\vartheta$ satisfies regularities
\begin{align} \label{9.17.4}
\vartheta \in C^{0,1}\left([0, T] ; L^{\infty}(\bar{\Omega})\right) \cap L^{\infty} \left([0, T] ; C^{0,1}(\bar{\Omega})\right),\quad \left\|\left(\partial_t \vartheta, \nabla \vartheta\right)\right\|_{L^{\infty}(\Omega \times(0, T))} \leq C, 
\end{align}
coercivity and consistency
\begin{align} \label{9.17.5}
\tilde{c} \min \{d_{\Gamma}, 1\} \leq |\vartheta| \leq C \min \{d_{\Gamma}, 1\} \quad \text { is fulfilled in } \Omega \times[0, T]
\end{align} 
and transportability property
\begin{align} \label{9.17.6}
\left|\partial_t \vartheta+ \((\HH+\tilde{\vv}) \cdot \nabla\) \vartheta\right| \leq C \min \{d_{\Gamma}, 1\} \quad \text { a.e. in } \Omega \times[0, T].
\end{align}

In the following, we aim to discuss some properties of relative entropy. To accomplish this, we define analogous mean curvature in the phase field:
\begin{align}
 \HH_\ve(x,t) \triangleq -\left(\varepsilon \Delta Q_\ve-\frac{D F(Q_\ve)}{\varepsilon} \right):\frac{\nabla Q_\ve}{\left|\nabla Q_\ve\right|}&=-\varepsilon \left(\partial_{t} Q_{\varepsilon}+(\mathbf{v}_{\varepsilon} \cdot \nabla) Q_{\varepsilon}\right):\frac{\nabla Q_\ve}{\left|\nabla Q_\ve\right|} \label{mean curvature app}.
 \end{align}

Now we present some properties of the relative energy that will be frequently employed in this paper.
\begin{lemma} \label{prop2.1}
(\cite[Lemma 4.2]{liu}) For every $T_* \in [0, T_0)$, there exists a constant $C$, such that for all $T \in [0, T_*]$, the following inequalities hold:
\begin{align} 
&\qquad \qquad \qquad \qquad \qquad \frac{1}{2} \int_\Omega |\mathbf{v}_{\varepsilon}(\cdot,t)-\mathbf{v}(\cdot,t)|^2 \;\mathrm{d}x \leq  E \left[\mathbf{v}_{\varepsilon}, Q_{\varepsilon} \mid \mathbf{v}, \chi\right](T),\\
& \int_{\Omega} |\mathbf{n}_{\varepsilon}-\boldsymbol{\xi}|^2\left|\nabla \psi_{\varepsilon}\right| \mathrm{d} x+\int_{\Omega} \(\frac{\varepsilon}{2} \left|\nabla Q_\ve\right|^2+\frac{1}{\ve} F_\ve(Q_\ve)-|\nabla \psi_\ve| \)\,\mathrm{d} x \leq  E \left[\mathbf{v}_{\varepsilon}, Q_{\varepsilon} \mid \mathbf{v}, \chi\right](T),\label{energy bound0}\\
&\frac12\int_{\Omega} \left(\sqrt{\varepsilon}\left|\Pi_{Q_\ve}\nabla Q_\ve\right|-\frac{1}{\sqrt{\ve}}\sqrt{2 F_\ve(Q_\ve)} \right)^{2}\,\mathrm{d} x+\frac \ve 2 \int_{\Omega}  \(  \left|\nabla Q_\ve-\Pi_{Q_\ve}\nabla Q_\ve\right|^2   \)\,\mathrm{d} x\leq  E \left[\mathbf{v}_{\varepsilon}, Q_{\varepsilon} \mid \mathbf{v}, \chi\right](T),\label{energy bound2}\\
&\int_{\Omega} \left(\sqrt{\varepsilon}\left|\nabla Q_\ve\right|-\frac{1}{\sqrt{\varepsilon}} |D d^F_\ve(Q_\ve)| \right)^{2} \,\mathrm{d} x  + \int_{\Omega}\left(\sqrt{\varepsilon}\left|\Pi_{Q_\ve}\nabla Q_\ve\right|-\frac1{\sqrt{\ve}} |D d^F_\ve(Q_\ve)| \right)^{2}\,\mathrm{d} x\notag\\
  &\qquad\qquad\qquad\qquad\qquad\quad+\int_{\Omega} \left(1-\bxi \cdot\nn_{\varepsilon}\right)\( {\frac{\ve}{2}}\left|\Pi_{Q_\ve}\nabla Q_\ve\right|^{2}+\left|\nabla \psi_{\varepsilon}\right|\) \,\mathrm{d} x\leq   CE \left[\mathbf{v}_{\varepsilon}, Q_{\varepsilon} \mid \mathbf{v}, \chi\right](T),\label{energy bound1} \\
&  \qquad  \int_{\Omega}  \(\frac{\varepsilon}2 \left|\nabla Q_\ve\right| ^{2}+\frac1 {\varepsilon}{ F_\ve(Q_\ve)}+|\nabla\psi_\ve|\) \min\(d_\Gamma^2,1\)\,\mathrm{d} x\leq    C E \left[\mathbf{v}_{\varepsilon}, Q_{\varepsilon} \mid \mathbf{v}, \chi\right](T). \label{energy bound3}
\end{align}
\end{lemma}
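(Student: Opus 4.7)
My plan is to establish all six bounds from a single pointwise non-negative decomposition of the $Q$-part of the relative-energy integrand. The starting ingredient is the orthogonal splitting $|\nabla Q_\varepsilon|^2 = |\Pi_{Q_\varepsilon}\nabla Q_\varepsilon|^2 + |\nabla Q_\varepsilon - \Pi_{Q_\varepsilon}\nabla Q_\varepsilon|^2$ induced by the projection \eqref{projection1}. Applying Young's inequality to the projected contribution in completed-square form,
\begin{align*}
\tfrac{\varepsilon}{2}|\Pi_{Q_\varepsilon}\nabla Q_\varepsilon|^2 + \tfrac{1}{\varepsilon}F_\varepsilon(Q_\varepsilon) = |\Pi_{Q_\varepsilon}\nabla Q_\varepsilon|\sqrt{2F_\varepsilon(Q_\varepsilon)} + \tfrac{1}{2}\bigl(\sqrt{\varepsilon}\,|\Pi_{Q_\varepsilon}\nabla Q_\varepsilon| - \tfrac{1}{\sqrt{\varepsilon}}\sqrt{2F_\varepsilon(Q_\varepsilon)}\bigr)^2,
\end{align*}
and invoking Lemma \ref{basic control est} together with the chain-rule identity \eqref{projectionnorm} to conclude $|\Pi_{Q_\varepsilon}\nabla Q_\varepsilon|\sqrt{2F_\varepsilon} \geq |\Pi_{Q_\varepsilon}\nabla Q_\varepsilon|\,|Dd^F_\varepsilon(Q_\varepsilon)| = |\nabla\psi_\varepsilon|$, yields the pointwise lower bound
\begin{align*}
\tfrac{\varepsilon}{2}|\nabla Q_\varepsilon|^2 + \tfrac{1}{\varepsilon}F_\varepsilon \geq |\nabla\psi_\varepsilon| + \tfrac{1}{2}\bigl(\sqrt{\varepsilon}\,|\Pi_{Q_\varepsilon}\nabla Q_\varepsilon| - \tfrac{1}{\sqrt{\varepsilon}}\sqrt{2F_\varepsilon}\bigr)^2 + \tfrac{\varepsilon}{2}|\nabla Q_\varepsilon - \Pi_{Q_\varepsilon}\nabla Q_\varepsilon|^2.
\end{align*}

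Next I would write $\nabla\psi_\varepsilon = |\nabla\psi_\varepsilon|\,\mathbf{n}_\varepsilon$ and apply the elementary identity $1 - \boldsymbol{\xi}\cdot\mathbf{n}_\varepsilon = \tfrac{1}{2}|\boldsymbol{\xi}-\mathbf{n}_\varepsilon|^2 + \tfrac{1}{2}(1-|\boldsymbol{\xi}|^2)$ so that the $Q$-integrand of the relative energy splits into the non-negative slack above, the defect term $(1-\boldsymbol{\xi}\cdot\mathbf{n}_\varepsilon)|\nabla\psi_\varepsilon|$, and in turn into $\tfrac{1}{2}|\boldsymbol{\xi}-\mathbf{n}_\varepsilon|^2|\nabla\psi_\varepsilon|$ plus $\tfrac{1}{2}(1-|\boldsymbol{\xi}|^2)|\nabla\psi_\varepsilon|$. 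Discarding appropriate pieces produces the velocity inequality, \eqref{energy bound0}, and \eqref{energy bound2} at once. For \eqref{energy bound1} I would replace $\sqrt{2F_\varepsilon}$ by $|Dd^F_\varepsilon|$ using Lemma \ref{basic control est} and the triangle estimate $(a-b)^2 \leq 2(a-c)^2 + 2(c-b)^2$, absorbing the resulting defect into the Young slack already at hand; the additional term $(1-\boldsymbol{\xi}\cdot\mathbf{n}_\varepsilon)\tfrac{\varepsilon}{2}|\Pi_{Q_\varepsilon}\nabla Q_\varepsilon|^2$ then follows from the controlled $(1-\boldsymbol{\xi}\cdot\mathbf{n}_\varepsilon)|\nabla\psi_\varepsilon|$ via AM--GM, since the pointwise bound above gives $\tfrac{\varepsilon}{2}|\Pi_{Q_\varepsilon}\nabla Q_\varepsilon|^2 \leq |\nabla\psi_\varepsilon|$ up to the Young slack.

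Finally, for the weighted estimate \eqref{energy bound3}, the crucial input is the coercivity bound $|\boldsymbol{\xi}| \leq 1 - C\min(d_\Gamma^2, 1)$ from \eqref{2.28.1}, which upgrades to $1 - |\boldsymbol{\xi}|^2 \geq C\min(d_\Gamma^2, 1)$. Then $\tfrac{1}{2}(1-|\boldsymbol{\xi}|^2)|\nabla\psi_\varepsilon|$ already dominates $|\nabla\psi_\varepsilon|\min(d_\Gamma^2, 1)$, while the bulk and gradient contributions with weight $\min(d_\Gamma^2, 1) \leq 1$ are absorbed using the pointwise lower bound $\tfrac{\varepsilon}{2}|\nabla Q_\varepsilon|^2 + \tfrac{1}{\varepsilon}F_\varepsilon \geq |\nabla\psi_\varepsilon|$ together with the non-negativity of the slack $\tfrac{\varepsilon}{2}|\nabla Q_\varepsilon|^2 + \tfrac{1}{\varepsilon}F_\varepsilon - |\nabla\psi_\varepsilon|$. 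The main obstacle is not any single step but the bookkeeping: one must verify that the defect arising in replacing $\sqrt{2F_\varepsilon}$ by $|Dd^F_\varepsilon|$ is quantitatively absorbed by the Young slack uniformly in $\varepsilon$. This is precisely where the sharpened Lemma \ref{basic control est}, which uses both the $\varepsilon^3$ regularization in \eqref{new bulk} and the uniform $L^\infty$ bound from Lemma \ref{L infinity bound}, becomes indispensable.
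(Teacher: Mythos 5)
Your single pointwise decomposition of the $Q$-part of the relative energy is exactly the right starting point, and it reproduces the argument of \cite[Lemma 4.2]{liu} (the paper here simply cites that reference). The Young completion, the identity $1-\boldsymbol{\xi}\cdot\mathbf{n}_\varepsilon=\tfrac12|\boldsymbol{\xi}-\mathbf{n}_\varepsilon|^2+\tfrac12(1-|\boldsymbol{\xi}|^2)$, the use of Lemma \ref{basic control est} combined with \eqref{projectionnorm}, and the coercivity bound $1-|\boldsymbol{\xi}|^2\gtrsim\min(d_\Gamma^2,1)$ for \eqref{energy bound3} all lead where they should, and the velocity bound, \eqref{energy bound0} (up to the harmless factor $2$ that this route produces on the $|\mathbf{n}_\varepsilon-\boldsymbol{\xi}|^2$ term), \eqref{energy bound2}, and \eqref{energy bound3} fall out cleanly.

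The one genuine gap is in your treatment of the first two terms of \eqref{energy bound1}. You propose the detour
$\bigl(\sqrt{\varepsilon}|\nabla Q_\varepsilon|-\tfrac{1}{\sqrt{\varepsilon}}|Dd^F_\varepsilon|\bigr)^2\leq 2\bigl(\sqrt{\varepsilon}|\nabla Q_\varepsilon|-\tfrac{1}{\sqrt{\varepsilon}}\sqrt{2F_\varepsilon}\bigr)^2 + 2\bigl(\tfrac{1}{\sqrt{\varepsilon}}\sqrt{2F_\varepsilon}-\tfrac{1}{\sqrt{\varepsilon}}|Dd^F_\varepsilon|\bigr)^2$
and assert that the defect $\tfrac{1}{\varepsilon}(\sqrt{2F_\varepsilon}-|Dd^F_\varepsilon|)^2$ is ``absorbed into the Young slack.'' But what the Young slack actually controls (after splitting $ac = ab + a(c-b)$) is $\int|\Pi_{Q_\varepsilon}\nabla Q_\varepsilon|(\sqrt{2F_\varepsilon}-|Dd^F_\varepsilon|)\,\mathrm{d}x$ — the defect weighted by the projected gradient, not its square over $\varepsilon$. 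There is no pointwise comparison $\tfrac{1}{\varepsilon}(\sqrt{2F_\varepsilon}-|Dd^F_\varepsilon|)\lesssim|\Pi_{Q_\varepsilon}\nabla Q_\varepsilon|$, and in regions where $Q_\varepsilon$ is close to constant and far from $\{0\}\cup\mathcal N$ the defect integral can blow up independently of $E$. The fix is to skip the intermediate $\sqrt{2F_\varepsilon}$ entirely and expand directly: with $|\nabla Q_\varepsilon|\geq|\Pi_{Q_\varepsilon}\nabla Q_\varepsilon|$, $|Dd^F_\varepsilon|^2\leq 2F_\varepsilon$ (Lemma \ref{basic control est}), and $|\nabla\psi_\varepsilon|=|\Pi_{Q_\varepsilon}\nabla Q_\varepsilon|\,|Dd^F_\varepsilon|$,
\begin{align*}
\Bigl(\sqrt{\varepsilon}|\nabla Q_\varepsilon|-\tfrac{1}{\sqrt{\varepsilon}}|Dd^F_\varepsilon|\Bigr)^2
= \varepsilon|\nabla Q_\varepsilon|^2 - 2|\nabla Q_\varepsilon||Dd^F_\varepsilon| + \tfrac{1}{\varepsilon}|Dd^F_\varepsilon|^2
\leq 2\Bigl(\tfrac{\varepsilon}{2}|\nabla Q_\varepsilon|^2 + \tfrac{1}{\varepsilon}F_\varepsilon - |\nabla\psi_\varepsilon|\Bigr),
\end{align*}
and likewise for the projected version. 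Once these two terms are controlled, your remaining estimate for $(1-\boldsymbol{\xi}\cdot\mathbf{n}_\varepsilon)\tfrac{\varepsilon}{2}|\Pi_{Q_\varepsilon}\nabla Q_\varepsilon|^2$ — writing $\tfrac12 a^2 \leq \tfrac12(a-b)^2 + ab$ and using $(1-\boldsymbol{\xi}\cdot\mathbf{n}_\varepsilon)\leq 2$ — does go through, so the rest of the proof is sound.
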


It is convenient to introduce the following lemma.
\begin{lemma} \label{lemma2.2}
For a suitably small $\lambda>0$, it holds that
\begin{equation} \label{2.19.7}
\int_{\Omega} |\sigma \chi-\psi_{\varepsilon}| |\mathbf{v}_{\varepsilon}-\mathbf{v}| \,\mathrm{d} x \le \frac{C}{\lambda}\int_{\Omega} (E_{\mathrm{vol}}\left[Q_{\varepsilon} \mid \chi\right]+ |\mathbf{v}_{\varepsilon}-\mathbf{v}|^2) \,\mathrm{d} x+\lambda \int_{\Omega}  |\nabla \mathbf{v}_{\varepsilon}-\nabla \mathbf{v}|^2 \,\mathrm{d} x.
\end{equation}
\end{lemma}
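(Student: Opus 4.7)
The plan is to split $\Omega$ into a thin tubular neighbourhood of the interface, $T_\lambda\triangleq\{x\in\Omega:|d_\Gamma(x,t)|\le\lambda\}$, and its complement $\Omega\setminus T_\lambda$, and to estimate the two resulting contributions by completely different mechanisms: a coercivity argument on the far-field side, and a trace/Poincar\'e argument on the near-interface side.

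On $\Omega\setminus T_\lambda$ I would exploit the coercivity \eqref{9.17.5} of the truncated distance $\vartheta$, which for $\lambda\le 1$ yields the pointwise bound $1\le\tilde c^{-1}\lambda^{-1}|\vartheta(d_\Gamma)|$ on this set. Combined with the uniform $L^\infty$ bound $|\sigma\chi-\psi_\ve||\vartheta|\le C$ (which follows from Lemma~\ref{L infinity bound} and $|\vartheta|\le\delta$), factorizing $|\sigma\chi-\psi_\ve||\vartheta|=(|\sigma\chi-\psi_\ve||\vartheta|)^{1/2}\cdot(|\sigma\chi-\psi_\ve||\vartheta|)^{1/2}$ and applying Cauchy--Schwarz gives
\[
\int_{\Omega\setminus T_\lambda}|\sigma\chi-\psi_\ve||\vv_\ve-\vv|\,\mathrm{d}x \le \frac{C}{\lambda}E_{\mathrm{vol}}^{1/2}\|\vv_\ve-\vv\|_{L^2(\Omega)}.
\]
A Young inequality $ab\le\tfrac12 a^2+\tfrac12 b^2$ then absorbs this into $\tfrac{C}{\lambda}E_{\mathrm{vol}}+\tfrac{C}{\lambda}\|\vv_\ve-\vv\|_{L^2(\Omega)}^2$, which is exactly the first block of the right-hand side of \eqref{2.19.7}.

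On the tubular piece $T_\lambda$ the uniform bound $|\sigma\chi-\psi_\ve|\le C$ together with $|T_\lambda|\le C\lambda$ and H\"older give
\[
\int_{T_\lambda}|\sigma\chi-\psi_\ve||\vv_\ve-\vv|\,\mathrm{d}x \le C\lambda^{1/2}\|\vv_\ve-\vv\|_{L^2(T_\lambda)}.
\]
The key step is to convert the restricted $L^2$ norm on the thin tube into a quantity producing a gradient term with coefficient $\lambda$. Writing $\vv_\ve-\vv$ along normal lines to $\Gamma_t$ via the fundamental theorem of calculus and then using the trace inequality with Poincar\'e (valid since $\vv_\ve-\vv\in H^1_0(\Omega)$ by the Dirichlet boundary conditions) yields
\[
\|\vv_\ve-\vv\|_{L^2(T_\lambda)}^2\le C\lambda\,\|\vv_\ve-\vv\|_{L^2(\Omega)}\|\nabla(\vv_\ve-\vv)\|_{L^2(\Omega)}+C\lambda^2\|\nabla(\vv_\ve-\vv)\|_{L^2(\Omega)}^2.
\]
Substituting back and applying Young's inequality, with a weight balancing $\lambda^{-1}$ against $\lambda$, produces $\tfrac{C}{\lambda}\|\vv_\ve-\vv\|_{L^2(\Omega)}^2+\lambda\|\nabla(\vv_\ve-\vv)\|_{L^2(\Omega)}^2$. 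Summing this with the far-field contribution gives \eqref{2.19.7}.

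The main obstacle is precisely this near-interface estimate. Because $T_\lambda$ has volume only $O(\lambda)$, the naive H\"older bound via $|T_\lambda|^{1/2}\|\vv_\ve-\vv\|_{L^2}$ leaves a $\lambda$-independent residue that does not sit on the dissipative side of the inequality. Extracting the correct scaling requires genuine $H^1_0$ information through the FTOC/trace argument above, so it is the homogeneous Dirichlet condition on $\partial\Omega$ (rather than any fine information about $\Gamma_t$) that actually powers the step. Once this tubular interpolation is in place, the far-field estimate is essentially bookkeeping driven by the coercivity of $\vartheta$.
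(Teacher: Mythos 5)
Your overall split into a tubular region $T_\lambda$ and its complement, using the coercivity of $\vartheta$ on the far side, is the right skeleton. However, the closing Young step in the near-interface part does not work. From your two displayed estimates one gets
\begin{equation*}
\int_{T_\lambda}|\sigma\chi-\psi_{\varepsilon}||\vv_{\varepsilon}-\vv|\,\mathrm{d}x \;\le\; C\lambda\,\|\vv_{\varepsilon}-\vv\|_{L^2(\Omega)}^{1/2}\|\nabla(\vv_{\varepsilon}-\vv)\|_{L^2(\Omega)}^{1/2} + C\lambda^{3/2}\|\nabla(\vv_{\varepsilon}-\vv)\|_{L^2(\Omega)},
\end{equation*}
which is homogeneous of degree \emph{one} in $\vv_{\varepsilon}-\vv$, while every $(\vv_{\varepsilon}-\vv)$-dependent term on the right of \eqref{2.19.7} is degree two. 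No choice of Young weight can turn the former into the latter: setting $\|\vv_{\varepsilon}-\vv\|_{L^2}=\|\nabla(\vv_{\varepsilon}-\vv)\|_{L^2}=t$, the left-hand side above is $O(\lambda t)$ while $\tfrac{C'}{\lambda}\|\vv_{\varepsilon}-\vv\|^2_{L^2}+\lambda\|\nabla(\vv_{\varepsilon}-\vv)\|^2_{L^2}=O(t^2)$, so the claimed inequality fails as $t\to 0$. The best Young can actually deliver is the right-hand side of \eqref{2.19.7} \emph{plus} an additive residue of order $\lambda^2$, and that residue is fatal: it is not controlled by $E_{\mathrm{vol}}$, and once inserted into the Gronwall step of Theorem~\ref{thm1.1} it caps the final bound at $O(\varepsilon+\lambda^2)$ rather than $O(\varepsilon)$, since $\lambda$ is a fixed ($\varepsilon$-independent) constant needed to absorb $\lambda\|\nabla(\vv_{\varepsilon}-\vv)\|^2$ into the dissipation. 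In short, your trace/Poincar\'e interpolation converts a first-power quantity into a first-power quantity of different ingredients; it does not produce the all-second-power right-hand side, and estimating the near- and far-field contributions independently and then adding loses exactly the interaction between $E_{\mathrm{vol}}$ and the gradient dissipation that the claimed inequality relies on.

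A secondary point worth flagging: your far-field step silently uses $\int_\Omega|\sigma\chi-\psi_{\varepsilon}||\vartheta|\,\mathrm{d}x = E_{\mathrm{vol}}[Q_{\varepsilon}\mid\chi]$, which requires the integrand $(\sigma\chi-\psi_{\varepsilon})\vartheta$ to be nonnegative a.e.\ This is automatic in the scalar Allen--Cahn setting of the reference the paper cites, where $\psi_{\varepsilon}\in[0,\sigma]$ pointwise, but in the present $Q$-tensor model $\psi_{\varepsilon}=d^F_{\varepsilon}(Q_{\varepsilon})$ can exceed $\sigma$, which is precisely why Section~4 introduces the $g_{\varepsilon}$/$h_{\varepsilon}$ decomposition. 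That sign information should be argued, not assumed.
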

\begin{proof}
 The proof for this lemma is not provided here but can be found in (31) in \cite{Sebastian Hensel}.
\end{proof}

Finally, this section ends with the proof of the initial data \eqref{initial}. 
Let $\tilde{\zeta}(z)$ be a cut-off function defined as follows:
\begin{equation}\label{cut-off zeta}
\tilde{\zeta}(s)=1~\text{for}~|s|\leq 1/2; \quad  \tilde{\zeta}(s)=0~\text{for}~|s|\geq 1.
\end{equation}

To define $Q_{0, \varepsilon}$, we employed the construction presented in \cite{liu}.  For the sake of clarity in the proof, we redescribe it here. It is necessary to introduce the following profile $S(z)$, which is the unique increasing solution of
\begin{align} \label{travelling wave}
-S''(z)+a S(z)-\frac{b}{3}S^2(z)+\frac{2}{3} c S^3(z)=0
\end{align}
with the boundary conditions $S(-\infty)=0,\,S(+\infty)=s_+$.
Hence, we know that
\begin{equation}\label{degree of orientation}
S(z)\triangleq\frac{s_+}{2}\left(1+\tanh \left(\frac{\sqrt{a}}{2} z\right)\right),\quad z\in \R
\end{equation}
and define
\begin{align}
\tilde{S}_\ve(x)&\triangleq \tilde{\zeta}\(\frac {d_\Gamma(x,0)}{\delta}\)S\(\frac{d_\Gamma(x,0)}\ve\)+\(1-\tilde{\zeta}\(\frac {d_\Gamma(x,0)}{\delta}\)\)s_+\chi_{\Omega^+(0)} \label{cut-off initial}\\
&\triangleq S\(\frac{d_\Gamma(x,0)}\ve\)  +\hat{S}_\ve(x), \label{1.4.2.49}
\end{align}
where
\begin{equation}
\hat{S}_\ve(x) \triangleq \(1-\zeta\(\frac {d_\Gamma(x,0)}{\delta}\)\)\(s_+\chi_{\Omega^+(0)} -S\(\frac{d_\Gamma(x,0)}\ve\)\).
\end{equation}
Considering the properties of solutions to the second order ordinary differential equation \eqref{travelling wave}, we assert that 
\begin{equation}\label{hat S decay}
\|\hat{S}_\ve\|_{L^\infty(\O)}+\|\nabla \hat{S}_\ve\|_{L^\infty(\O)}\leq Ce^{-\frac C\ve}
\end{equation}
holds for some constant $C>0$ that depends only on $\Gamma_0$, cf. \cite[Section 2 and Section 3]{liu}.

As  mentioned above, in this paper, we will derive the estimation for the bulk error under a well-prepared initial condition. Therefore, we need to verify that the  given initial value satisfies some decay estimate, which will be shown in the following proposition.

\begin{proposition}\label{prop 2.9}
For  every  $\mathbf{u}_0\in H^1(\O;\BS)$,  the initial datum takes the following form:
\begin{equation}\label{sharp initial}
Q_{0, \varepsilon}(x)\triangleq\tilde{S}_\ve(x)\(\mathbf{u}_0(x)\otimes \mathbf{u}_0(x)-\frac 13 I_3\).
\end{equation}
It fulfills $Q_{0, \varepsilon}\in H^1(\O;\Q)\cap L^\infty(\O;\Q)$ and
\begin{align}\label{transition initial data}
Q_{0, \varepsilon}(x)=\left\{
\begin{array}{rl}
s_+(\mathbf{u}_0\otimes \mathbf{u}_0-\frac 13 I_3)& \quad \text{if}~ x\in \Omega^+(0)\backslash \Gamma_0(\delta),\\
S\(\frac{d_\Gamma(x,0)}\ve\)(\mathbf{u}_0\otimes \mathbf{u}_0-\frac 13 I_3)& \quad \text{if}~ x\in \Gamma_0(\delta/2),\\
0&\quad \text{if}~ x\in \Omega^-(0)\backslash \Gamma_0(\delta).
\end{array}
\right.
\end{align}
 Additionally, there exists  a constant $C_0>0$ which only depends on $\Gamma_0$ and $\|\uu_0\|_{H^1(\O)}$  such that \eqref{initial} holds true.
\end{proposition}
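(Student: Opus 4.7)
My plan is to establish the three assertions in sequence: (1) $Q_{0,\ve}\in H^1(\Omega;\Q)\cap L^\infty(\Omega;\Q)$; (2) the piecewise formula \eqref{transition initial data}; (3) the $O(\ve)$ bound on $E[\vv_\ve, Q_\ve\mid \vv,\chi](0)+E_{\mathrm{vol}}[Q_\ve\mid \chi](0)$. Claims (1) and (2) are routine unpackings of the construction \eqref{sharp initial}--\eqref{cut-off initial} together with $|\uu_0|=1$ a.e.\ and the smoothness of $\tilde\zeta$, $S$, and $d_\Gamma(\cdot,0)$ in a tubular neighbourhood of $\Gamma_0$; the only nontrivial point is that the correction $\hat S_\ve$ in \eqref{1.4.2.49} is engineered so that $\tilde S_\ve\equiv s_+\chi_{\Omega^+(0)}$ on $\{|d_\Gamma|\geq \delta\}$, pinning $Q_{0,\ve}$ to the zero set $\{0\}\cup\N$ of $F$ outside the layer (rather than only up to an exponential error).

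For (3) I would partition $\Omega$ into the bulk $\{|d_\Gamma|\geq \delta\}$, the outer transition $\{\delta/2\leq |d_\Gamma|\leq\delta\}$, and the inner transition $\{|d_\Gamma|\leq\delta/2\}$. On the bulk region $F(Q_{0,\ve})\equiv 0$, $\bxi\equiv 0$, and the gradient energy is bounded by $\tfrac{\ve}{2}s_+^2\|\nabla\uu_0\|_{L^2}^2=O(\ve)$; on the outer transition every contribution is $O(e^{-C/\ve})$ by \eqref{hat S decay}. The heart of the argument is the inner transition, where $\tilde S_\ve=S(d_\Gamma/\ve)$ exactly. The two key ingredients are (a) the equipartition $(S')^2=3f(S)$, obtained by multiplying \eqref{travelling wave} by $S'$ and integrating using $S(\pm\infty)\in\{0,s_+\}$, and (b) the vanishing of the cross term in $|\nabla Q_{0,\ve}|^2$ coming from $\uu_0\cdot\partial_k\uu_0=0$. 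Together they yield
\begin{equation*}
\tfrac{\ve}{2}|\nabla Q_{0,\ve}|^2+\tfrac{1}{\ve}F(Q_{0,\ve})\;=\;\tfrac{2f(S(d_\Gamma/\ve))}{\ve}+\ve\,S^2(d_\Gamma/\ve)|\nabla \uu_0|^2,
\end{equation*}
so the leading $O(1/\ve)$ piece matches $|\nabla\psi_{0,\ve}|$ computed via the chain rule \eqref{ADM chain rule} and the uniaxial formula \eqref{precise diff}: the $\nabla\uu_0$ contribution to $\nabla Q_{0,\ve}$ is annihilated by $Dd^F(Q_{0,\ve})$ because $d^F$ is invariant along the $\BS$-fibre of $\N$, leaving $|\nabla\psi_{0,\ve}|=\tfrac{2f(S)}{\ve}$ up to a mollification error of order $\ve^4$ from \eqref{psi convolu} (controlled via Lemma \ref{basic control est}). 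The calibration defect then reduces to $\int|\nabla\psi_{0,\ve}|(1-\phi(d_\Gamma/\delta))\,\dd x$, and the quadratic vanishing $1-\phi(x)\leq 4x^2$ near $0$ combined with $\int f(S(z))z^2\,\dd z<\infty$ (from the exponential decay of $S$ at $\pm\infty$) produces an $O(\ve^2)$ bound.

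The bulk-error term $E_{\mathrm{vol}}(0)$ is treated in the same partition: on $\{|d_\Gamma|\leq C\ve\}$ I would use $|\vartheta|\leq C|d_\Gamma|\leq C\ve$ together with tubular volume $O(\ve)$; on $\{C\ve\leq|d_\Gamma|\leq\delta\}$ the exponential convergence $|S(d_\Gamma/\ve)-s_\pm|\leq Ce^{-c|d_\Gamma|/\ve}$ controls $|\sigma\chi-\psi_{0,\ve}|$; and on the bulk region the quadratic vanishing of $d^F$ at its minima combined with the $\ve^4$-radius of $\phi_\ve$ yields $|\psi_{0,\ve}-\sigma\chi|=O(\ve^8)$. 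The main obstacle I anticipate is orchestrating the inner-layer computation so that the $O(1/\ve)$ parts of $\tfrac{\ve}{2}|\nabla Q|^2+\tfrac{1}{\ve}F$ and of $\bxi\cdot\nabla\psi_{0,\ve}$ cancel rather than accumulate: this forces the equipartition identity, the cross-term cancellation, the $\BS$-fibre invariance of $d^F$ over $\N$, and the quadratic behaviour of $\phi$ near zero to be deployed simultaneously, with every mollifier-induced perturbation shown to be at most $O(\ve)$.
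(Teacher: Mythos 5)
Your proposal is correct, and it takes a genuinely different route from the paper's on the bulk error. The paper defers the bound $E[\mathbf{v}_{\varepsilon}, Q_{\varepsilon} \mid \mathbf{v}, \chi](0)\le C_0\ve$ entirely to \cite[Proposition 2.3]{liu}, and for $E_{\mathrm{vol}}(0)$ it invokes Lemma \ref{lemma unixial} to write $\psi_\ve(x,0)$ as the one-dimensional profile $g(\tilde S_\ve)$ and then evaluates $\int_{S(d_\Gamma/\ve)}^{s_+}\sqrt{f(\tau)}\,d\tau$ in closed form (the quartic $f$ and the $\tanh$ profile make this a finite sum of exponentials), concluding by boundedness of the resulting $d_\Gamma/\ve$-weighted integrand. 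Your argument replaces that explicit formula with structural inputs: the equipartition $(S')^2=3f(S)$, the cross-term cancellation $\uu_0\cdot\partial_k\uu_0=0$, the $O(3)$-invariance of $d^F$ (which kills the $\nabla\uu_0$ contribution to $\nabla\psi_{0,\ve}$), and a three-region partition with volume/decay bounds and the quadratic vanishing $1-\phi(x)\le 4x^2$. This is more robust — it would survive without the exact $\tanh$ formula — and it also supplies the relative-energy half of the claim that the paper omits; the explicit route buys a shorter write-up but is tied to the bistable polynomial.

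One point deserves tightening on both sides, so flag it rather than quote Lemma \ref{basic control est} as if it closes the gap. Equation \eqref{1.4.2.47} in the paper writes $\psi_\ve(x,0)=d^F(Q_{0,\ve}(x))$, but by \eqref{psi} the object is $d^F_\ve=\phi_\ve*d^F$, not $d^F$. For the upper bound on $E[\cdot](0)$ you need $\bxi\cdot\nabla\psi_{0,\ve}$ to be \emph{bounded below} near $\tfrac{\ve}{2}|\nabla Q_{0,\ve}|^2+\tfrac1\ve F_\ve$, i.e.\ you need $|Dd^F_\ve(Q_{0,\ve})|$ close to $\sqrt{2F(Q_{0,\ve})}$ from below, whereas Lemma \ref{basic control est} only gives the reverse inequality $|Dd^F_\ve|\le\sqrt{2F_\ve}$. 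The honest statement is that $d^F$ is $C^1$ along the uniaxial path between $0$ and $\N$ (with derivative $-\sqrt{2F}$ in the radial direction, cf.\ \eqref{precise diff}), so the mollification at scale $\ve^4$ perturbs $Dd^F$ along $Q_{0,\ve}$ by a quantity that vanishes with $\ve$; that, not Lemma \ref{basic control est} alone, is what makes the mollification harmless. With that caveat made explicit your computation — leading-order cancellation leaving $\ve S^2|\nabla\uu_0|^2$ plus an $O(\ve^2)$ calibration defect — is a complete and correct account.
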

\begin{proof}
The proof of $E \left[\mathbf{v}_{\varepsilon}, Q_{\varepsilon} \mid \mathbf{v}, \chi\right](0) \leq C_0\ve$ is analogous to \cite[Proposition 2.3]{liu}, and we omit it here for brevity. To verify $E_{\mathrm{vol}}\left[Q_{\varepsilon} \mid \chi\right] (0)\leq C_0\ve$, we shall employ Lemma \ref{lemma unixial}. It follows from \eqref{psi} and \eqref{precise diff} that
\begin{align} \label{1.4.2.47}
\psi_\ve(x,0)=d^F(Q_{0, \varepsilon}(x))=\frac 2{\sqrt{3}} \int^{s_+}_{ \tilde{S}_\ve(x)} \sqrt{ f(\tau) } \, d\tau.
\end{align}
Recalling \eqref{cut-off zeta}, \eqref{cut-off initial} and the definition of $\sigma$ in \eqref{1.4.2.17}, we deduce that $(\sigma \chi-\psi_{\varepsilon})=0$ in $\Omega \backslash \Gamma_0(\delta)$. Therefore, it suffices to estimate it in the domain $\Omega_0^{+} \cap \Gamma_0(\delta)$, as the estimation in $\Omega_0^{-} \cap \Gamma_0(\delta)$ follows in a similar way. By using $\chi=\chi_{\Omega^-}$ and  \eqref{24.1.15.20}, we obtain
\begin{small}
\begin{align*}
&\int_{\Omega_0^{+} \cap \Gamma_0(\delta)}(\sigma \chi-\psi_{\varepsilon}) \vartheta(d_\Gamma) \,\mathrm{d} x\bigg|_{t=0} = \int_{\Omega_0^{+} \cap \Gamma_0(\delta)} \psi_{\varepsilon}(x)  d_\Gamma(x) \,\mathrm{d} x\bigg|_{t=0} \\
\stackrel{\eqref{1.4.2.47}}{=} & \frac 2{\sqrt{3}}\int_{\Omega_0^{+} \cap \Gamma_0(\delta)}\left(\int^{s_+}_{ \tilde{S}_\ve(x)} \sqrt{ f(\tau) } \, d\tau\right) d_\Gamma(x) \,\mathrm{d} x\bigg|_{t=0} \\
\stackrel{\eqref{1.4.2.49}}{=} & \frac {2\varepsilon}{\sqrt{3}} \int_{\Omega_0^{+} \cap \Gamma_0(\delta)}\left(\int^{s_+}_{ S\(\frac{d_\Gamma(x)}\ve\)} \sqrt{ f(\tau) } \, d\tau\right) \frac{d_\Gamma(x)}{\varepsilon} \,\mathrm{d} x\bigg|_{t=0}+\frac 2{\sqrt{3}}\int_{\Omega_0^{+} \cap \Gamma_0(\delta)}\left(\int^{ S\(\frac{d_\Gamma(x)}\ve\)}_{  S\(\frac{d_\Gamma(x)}\ve\)  +\hat{S}_\ve(x)} \sqrt{ f(\tau) } \, d\tau\right) d_\Gamma(x) \,\mathrm{d} x\bigg|_{t=0}\\
\stackrel{\eqref{hat S decay}}{=} & \frac {2\varepsilon}{\sqrt{3}} \int_{\Omega_0^{+} \cap \Gamma_0(\delta)}\left(\int^{s_+}_{ S\(\frac{d_\Gamma(x)}\ve\)} \sqrt{ f(\tau) } \, d\tau\right) \frac{d_\Gamma(x)}{\varepsilon} \,\mathrm{d} x\bigg|_{t=0}+O\left(e^{-C / \varepsilon}\right) \leq C \varepsilon,
\end{align*}
\end{small}
where in the last inequality, we used the following arguments. Since
\begin{equation}\label{s-poly}
f(\tau)=\frac c 9\tau^2 (\tau-s_+)^2,\quad \sqrt{f(\tau)}=\frac {\sqrt{c}}3 \tau(s_+ -\tau),~\text{for all}~ \tau\in [0,s_+]
\end{equation}
and
$$
S\(\frac{d_\Gamma(x)}\ve\)={s_+}\frac{e^{\frac{\sqrt{a}}{2}\frac{d_\Gamma(x)}\ve}}{e^{\frac{\sqrt{a}}{2}\frac{d_\Gamma(x)}\ve}+e^{-\frac{\sqrt{a}}{2}\frac{d_\Gamma(x)}\ve}},
$$
then we have
\begin{align*}
&\int^{s_+}_{ S\(\frac{d_\Gamma(x)}\ve\)} \sqrt{ f(\tau) } \, d\tau=\frac {\sqrt{c}}3 (\frac{s_+}{2} \tau^2-\frac{1}{3}\tau^3)\bigg|^{s_+}_{ S\(\frac{d_\Gamma(x)}\ve\)}\\
=&\frac {\sqrt{c}}3\frac{s_+^3}{6} \bigg[2\(\frac{e^{\frac{\sqrt{a}}{2}\frac{d_\Gamma(x)}\ve}}{e^{\frac{\sqrt{a}}{2}\frac{d_\Gamma(x)}\ve}+e^{-\frac{\sqrt{a}}{2}\frac{d_\Gamma(x)}\ve}}\)^3-3\(\frac{e^{\frac{\sqrt{a}}{2}\frac{d_\Gamma(x)}\ve}}{e^{\frac{\sqrt{a}}{2}\frac{d_\Gamma(x)}\ve}+e^{-\frac{\sqrt{a}}{2}\frac{d_\Gamma(x)}\ve}}\)^2+1\bigg]\\
=&\frac {\sqrt{c}}3\frac{s_+^3}{6} \frac{e^{-3\frac{\sqrt{a}}{2}\frac{d_\Gamma(x)}\ve}+3e^{-\frac{\sqrt{a}}{2}\frac{d_\Gamma(x)}\ve}}{\(e^{\frac{\sqrt{a}}{2}\frac{d_\Gamma(x)}\ve}+e^{-\frac{\sqrt{a}}{2}\frac{d_\Gamma(x)}\ve}\)^3}.
\end{align*}
In the view of this, we conclude
\begin{small}
\begin{align*}
\int_{\Omega_0^{+} \cap \Gamma_0(\delta)}\left(\int^{s_+}_{ S\(\frac{d_\Gamma(x)}\ve\)} \sqrt{ f(\tau) } \, d\tau\right) \frac{d_\Gamma(x)}{\varepsilon} \,\mathrm{d} x\bigg|_{t=0}=\frac {\sqrt{c}}3 \frac{s_+^3}{6} \int_{\Omega_0^{+} \cap \Gamma_0(\delta)}\frac{e^{-3\frac{\sqrt{a}}{2}\frac{d_\Gamma(x)}\ve}+3e^{-\frac{\sqrt{a}}{2}\frac{d_\Gamma(x)}\ve}}{\(e^{\frac{\sqrt{a}}{2}\frac{d_\Gamma(x)}\ve}+e^{-\frac{\sqrt{a}}{2}\frac{d_\Gamma(x)}\ve}\)^3} \frac{d_\Gamma(x)}{\varepsilon} \,\mathrm{d} x\bigg|_{t=0}\le C.
\end{align*}
\end{small}
and the proof is done.
\end{proof}
\section{Estimate of the Relative Energy} \label{sec c cali}

In this section, our goal is to establish the differential inequality for the relative energy given in \eqref{entropy}, which is represented in the following proposition.

\begin{proposition} \label{prop 3.4}
Let $ E \left[\mathbf{v}_{\varepsilon}, Q_{\varepsilon} \mid \mathbf{v}, \chi\right]$ and $E_{\mathrm{vol}}\left[Q_{\varepsilon} \mid \chi\right]$ be defined as in \eqref{entropy} and \eqref{area diff} respectively, then there exists a positive constant $C$ independent of $\e$, such that for any $T \in (0,T_0)$,
\begin{align*}
&\quad E \left[\mathbf{v}_{\varepsilon}, Q_{\varepsilon} \mid \mathbf{v}, \chi\right](T)+(1-\lambda)\int_0^T \!\int_{\Omega}  |\nabla \mathbf{v}_{\varepsilon}-\nabla \mathbf{v}|^2 \mathrm{d} x \mathrm{d} t \notag\\
&+\frac 1{\ve}(\frac{1}{2}-\lambda)\int_0^T \!\int_{\Omega}   \left| \ve \Big(\partial_{t} Q_{\varepsilon}+(\mathbf{v}_{\varepsilon} \cdot \nabla) Q_{\varepsilon}\Big) -(\operatorname{div} \bxi) D d^F_\ve(Q_\ve)  \right|^2\mathrm{d} x \mathrm{d} t \notag\\
&+\frac {\ve}{2} \int_0^T \!\int_{\Omega} |\partial_{t} Q_{\varepsilon}+(\mathbf{v}_{\varepsilon} \cdot \nabla) Q_{\varepsilon}+(\HH \cdot\nabla) Q_\ve|^2\,\mathrm{d} x \mathrm{d} t \nonumber \\
\le & E \left[\mathbf{v}_{\varepsilon}, Q_{\varepsilon} \mid \mathbf{v}, \chi\right](0)+ C \int_0^T E \left[\mathbf{v}_{\varepsilon}, Q_{\varepsilon} \mid \mathbf{v}, \chi\right] + E_{\mathrm{vol}}\left[Q_{\varepsilon} \mid \chi\right] \, \mathrm{d} t 
\end{align*}
holds true for a suitably small $\lambda>0$.
\end{proposition}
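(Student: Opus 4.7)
The plan is to compute $\tfrac{d}{dt}E[\mathbf{v}_\varepsilon,Q_\varepsilon\mid\mathbf{v},\chi](t)$ by treating its three pieces separately---the kinetic part $\tfrac12\int|\mathbf{v}_\varepsilon-\mathbf{v}|^2$, the diffuse energy $\int\bigl(\tfrac{\varepsilon}{2}|\nabla Q_\varepsilon|^2+\tfrac1\varepsilon F_\varepsilon(Q_\varepsilon)\bigr)$, and the modulation $-\int\boldsymbol{\xi}\cdot\nabla\psi_\varepsilon$---and then extract the three dissipation integrals on the left-hand side via completion of squares, modulo a remainder controlled by $C(E+E_{\mathrm{vol}})$ plus $\lambda$-fractions of the dissipations themselves (via Young's inequality, which is the source of the $1-\lambda$ and $\tfrac12-\lambda$ coefficients). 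Integrating in time yields the stated claim.

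For the kinetic part I would expand $\tfrac12|\mathbf{v}_\varepsilon-\mathbf{v}|^2=\tfrac12|\mathbf{v}_\varepsilon|^2+\tfrac12|\mathbf{v}|^2-\mathbf{v}_\varepsilon\cdot\mathbf{v}$, bound $\tfrac12\int|\mathbf{v}_\varepsilon|^2(T)$ by the weak-solution energy inequality \eqref{energy inequality}, compute $\tfrac12\int|\mathbf{v}|^2$ by testing the strong form of the $\mathbf{v}$-equation against $\mathbf{v}$ itself, and evaluate the cross term by testing the weak $\mathbf{v}_\varepsilon$-formulation against $\mathbf{v}$ and conversely. Summing, the viscous pieces combine into the targeted $-\int|\nabla\mathbf{v}_\varepsilon-\nabla\mathbf{v}|^2$, a transport error of type $\int\nabla\mathbf{v}:(\mathbf{v}_\varepsilon-\mathbf{v})\otimes(\mathbf{v}_\varepsilon-\mathbf{v})$ is dominated by $C\|\nabla\mathbf{v}\|_{L^\infty}\int|\mathbf{v}_\varepsilon-\mathbf{v}|^2\leq CE$ using \eqref{9.18.1}, while two coupling contributions survive: the surface-tension work $\sigma\int_{\Gamma_t}H_{\Gamma_t}\mathbf{n}_{\Gamma_t}\cdot(\mathbf{v}_\varepsilon-\mathbf{v})$ and the capillary work $\varepsilon\int(\nabla Q_\varepsilon\odot\nabla Q_\varepsilon):\nabla\mathbf{v}$. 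For the diffuse energy, the energy inequality supplies $-\tfrac1\varepsilon\int|\varepsilon\Delta Q_\varepsilon-\tfrac1\varepsilon DF_\varepsilon(Q_\varepsilon)|^2$, which via \eqref{10.19.1} equals $-\tfrac1\varepsilon\int|\varepsilon(\partial_tQ_\varepsilon+(\mathbf{v}_\varepsilon\cdot\nabla)Q_\varepsilon)|^2$. For the modulation term I integrate by parts to $\int(\operatorname{div}\boldsymbol{\xi})\psi_\varepsilon$, differentiate using the chain rule \eqref{8.11.7}, substitute \eqref{10.19.1}, and apply the transport identities \eqref{2.16.5}--\eqref{2.16.7}. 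The algebraic centerpiece is then to add and subtract $(\operatorname{div}\boldsymbol{\xi})Dd^F_\varepsilon(Q_\varepsilon)$ and $\varepsilon(\mathbf{H}\cdot\nabla)Q_\varepsilon$ so as to produce the two squared quantities on the left, with quadratic corrections bounded by Lemma \ref{basic control est}: $\tfrac1\varepsilon(\operatorname{div}\boldsymbol{\xi})^2|Dd^F_\varepsilon(Q_\varepsilon)|^2\leq \tfrac{C}{\varepsilon}F_\varepsilon(Q_\varepsilon)$, absorbed into the energy density up to geometric smallness.

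The main obstacle is reconciling the capillary work $\varepsilon\int\nabla Q_\varepsilon\odot\nabla Q_\varepsilon:\nabla\mathbf{v}$ arising from the kinetic computation with the transport-by-$\mathbf{v}_\varepsilon$ contribution arising when differentiating the modulation, while matching both against the sharp surface-tension term $\sigma\int_{\Gamma_t}H_{\Gamma_t}\mathbf{n}_{\Gamma_t}\cdot(\mathbf{v}_\varepsilon-\mathbf{v})$; none of these is individually $O(E)$. To handle this I would recast the combination through the energy--momentum stress tensor $T_\varepsilon$ alluded to in the introduction (essentially $\tfrac{\varepsilon}{2}|\nabla Q_\varepsilon|^2 I-\varepsilon\nabla Q_\varepsilon\odot\nabla Q_\varepsilon$) and test its difference with $\sigma(I-\boldsymbol{\xi}\otimes\boldsymbol{\xi})\chi$ against $\nabla(\mathbf{v}_\varepsilon-\mathbf{v})$. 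The resulting residues split into a rank-one defect $|\nabla Q_\varepsilon-\Pi_{Q_\varepsilon}\nabla Q_\varepsilon|^2$, a length defect $(\sqrt{\varepsilon}|\nabla Q_\varepsilon|-\tfrac1{\sqrt{\varepsilon}}|Dd^F_\varepsilon(Q_\varepsilon)|)^2$, and an angular defect $(1-\boldsymbol{\xi}\cdot\mathbf{n}_\varepsilon)|\nabla\psi_\varepsilon|$, all bounded by $CE$ via Lemma \ref{prop2.1}, together with tensorial information from \eqref{projection} and a geometric remainder of order $d_\Gamma$ from $|\mathbf{H}\cdot\boldsymbol{\xi}+\operatorname{div}\boldsymbol{\xi}|$ in \eqref{2.20.1}, which is absorbed using the weighted coercivity \eqref{energy bound3}. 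Residual contributions of the form $(\sigma\chi-\psi_\varepsilon)(\mathbf{v}_\varepsilon-\mathbf{v})$ are handled by Lemma \ref{lemma2.2}, producing the $\lambda$-loss on the velocity dissipation and the $E_{\mathrm{vol}}$ factor on the right-hand side. Gronwall's inequality then closes the bound.
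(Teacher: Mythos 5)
Your proposal is essentially the same as the paper's: test the Navier--Stokes equation against $\mathbf{v}$ and the sharp-interface momentum balance against $\mathbf{w}=\mathbf{v}_\varepsilon-\mathbf{v}$ for the kinetic block, invoke the weak-solution energy inequality for the diffuse block, differentiate $-\int\boldsymbol{\xi}\cdot\nabla\psi_\varepsilon$ via the chain rule \eqref{8.11.7}, substitute \eqref{10.19.1}, complete squares, and absorb the residues with the coercivity estimates of Lemma \ref{prop2.1} and the $(\sigma\chi-\psi_\varepsilon)$-weighted term with Lemma \ref{lemma2.2}. All of the key ingredients you name (the stress tensor $T_\varepsilon$, identity $\operatorname{div}T_\varepsilon=\HH_\varepsilon|\nabla Q_\varepsilon|$, the transport identities \eqref{2.16.5}--\eqref{2.16.7}, the $\lambda$-loss from Young, Gronwall at the end) are precisely the ones the paper uses.

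One technical detail is imprecise, though it would not alter the outcome once corrected. You propose to test the difference $T_\varepsilon - \sigma\chi(I-\boldsymbol{\xi}\otimes\boldsymbol{\xi})$ against $\nabla\mathbf{w}$ to reconcile the capillary and surface-tension works. But the sharp surface-tension stress is a surface-concentrated object, whereas $\sigma\chi(I-\boldsymbol{\xi}\otimes\boldsymbol{\xi})$ is a bulk tensor: integrating by parts, $\sigma\int\chi(I-\boldsymbol{\xi}\otimes\boldsymbol{\xi}):\nabla\mathbf{w}\,\mathrm{d}x$ equals $\sigma\int\chi\bigl[(\operatorname{div}\boldsymbol{\xi})\boldsymbol{\xi}+(\boldsymbol{\xi}\cdot\nabla)\boldsymbol{\xi}\bigr]\cdot\mathbf{w}\,\mathrm{d}x$ (the boundary contribution $\int_{\Gamma_t}\mathbf{n}_{\Gamma_t}\cdot\mathbf{w}\,\mathrm{d}\mathcal{H}^2$ vanishes since $\div\mathbf{w}=0$), which is not the work $-\sigma\int_{\Gamma_t}H_{\Gamma_t}\mathbf{n}_{\Gamma_t}\cdot\mathbf{w}\,\mathrm{d}\mathcal{H}^2$. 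The paper instead uses $H_{\Gamma_t}=-\operatorname{div}\boldsymbol{\xi}$ on $\Gamma_t$ together with the divergence theorem and $\div\mathbf{w}=0$ to rewrite the surface-tension work as $-\sigma\int_\Omega\chi(\mathbf{w}\cdot\nabla)(\operatorname{div}\boldsymbol{\xi})\,\mathrm{d}x$; after an integration by parts, this combines with the modulation transport term $-\int(\operatorname{div}\boldsymbol{\xi})\mathbf{w}\cdot\mathbf{n}_\varepsilon|\nabla\psi_\varepsilon|\,\mathrm{d}x$ into $\int(\psi_\varepsilon-\sigma\chi)(\mathbf{w}\cdot\nabla)(\operatorname{div}\boldsymbol{\xi})\,\mathrm{d}x$, which is then absorbed by Lemma \ref{lemma2.2} exactly as you describe. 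The capillary work is handled separately via $-\varepsilon\int\nabla Q_\varepsilon\odot\nabla Q_\varepsilon:\nabla\mathbf{v}\,\mathrm{d}x=-\int\HH_\varepsilon\cdot\mathbf{v}|\nabla Q_\varepsilon|\,\mathrm{d}x$ and cancels against a matching term from the modulation computation. With this bookkeeping adjustment your plan reproduces the paper's argument.
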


To prove it, we divide the proof into several propositions.
By adopting a similar approach as that in Lemma 4.4 in \cite{liu}, we are able to derive the following identity. 
\begin{proposition}\label{lemma exact dt relative entropy}
For any $T \in (0,T_0)$, we have
	\begin{subequations}\label{time deri 4}
		\begin{align}
&\int_{\Omega} \frac{1}{2} |\mathbf{v}_\varepsilon(t)|^2+ \left[\frac{\varepsilon}{2}|\nabla Q_{\varepsilon}|^2+ \frac{1}{\varepsilon}F_\e (Q_{\varepsilon})-\boldsymbol{\xi} \cdot \nabla \psi_{\varepsilon}\right](t)\mathrm{d} x \bigg|_{t=0}^{t=T} \notag\\
\le&-\int_0^T \!\int_{\Omega} |\nabla \mathbf{v}_\varepsilon|^2 \mathrm{d} x \, \mathrm{d} t-\frac 1{2\ve}\int_0^T \!\int_{\Omega}   \Big| \ve \Big(\partial_{t} Q_{\varepsilon}+(\mathbf{v}_{\varepsilon} \cdot \nabla) Q_{\varepsilon} \Big)  -(\operatorname{div} \bxi) D d^F_\ve(Q_\ve)  \Big|^2\mathrm{d} x \mathrm{d} t\nonumber \\
		&-\frac 1{2\ve}\int_0^T \!\int_{\Omega}   \Big| \HH_\ve-\ve|\nabla Q_\ve| \HH \Big|^2 \, \mathrm{d} x \mathrm{d} t-{\frac 1{2\ve}\int_0^T \!\int_{\Omega}     \Big(\Big|\ve \Big(\partial_{t} Q_{\varepsilon}+(\mathbf{v}_{\varepsilon} \cdot \nabla) Q_{\varepsilon} \Big)\Big|^2 -   |\HH_\ve|^2\Big)\,\mathrm{d} x \mathrm{d} t}\nonumber \\
		&-\int_0^T \!\int_{\Omega} (\operatorname{div} \boldsymbol{\xi}) \, D d^F_\e(Q_\e) \colon (\vv_\e \cdot \nabla) Q_\e\, \mathrm{d} x \mathrm{d} t +\frac 1{2\ve} \int_0^T \!\int_{\Omega}   \Big| (\operatorname{div} \bxi) |D d^F_\ve(Q_\ve)|\nn_\ve +\ve |\Pi_{Q_\ve} \nabla Q_\ve| \HH\Big|^2\,\mathrm{d} x \mathrm{d} t \label{tail1}\\
            &+ \int_0^T \!\int_{\Omega}    (\vv \cdot \nabla) \bxi\cdot\nabla \psi_\ve+ \nabla \tilde{\vv} : \bxi \otimes \nn_\ve |\nabla \psi_\ve|\,\mathrm{d} x \mathrm{d} t     \label{tail0}\\
	       &+\frac \ve{2} \int_0^T \!\int_{\Omega}   |\HH|^2\(|\nabla Q_\ve|^2-|\Pi_{Q_\ve}\nabla Q_\ve|^2\)\,\mathrm{d} x \mathrm{d} t
		 -\int_0^T \!\int_{\Omega} \nabla \HH: (\bxi-\nn_\ve) \otimes (\bxi-\nn_\ve) \left|\nabla \psi_{\varepsilon}\right|\, \mathrm{d} x \mathrm{d} t\label{tail2}\\
		&   +\int_0^T \!\int_{\Omega}   \operatorname{div} \HH  \( \frac{\ve}2 |\nabla Q_\ve|^2 +\frac{1}\ve F_\ve(Q_\ve) -|\nabla \psi_\ve| \)\,\mathrm{d} x \mathrm{d} t+\int_0^T \!\int_{\Omega}  \operatorname{div} \HH  \(1-\bxi\cdot \nn_\ve\)|\nabla\psi_\ve|\, \mathrm{d} x \mathrm{d} t\label{tail3}\\
&+ J_\ve^1+ J_\ve^2,\notag
		\end{align}
	\end{subequations}
	where 
	\begin{align}
	J_\ve^1
	\triangleq&\int_0^T \!\int_{\Omega}   \nabla \HH: \nn_\ve \otimes\nn_{\varepsilon}\(|\nabla \psi_\ve|-\ve |\nabla Q_\ve|^2\)\, \mathrm{d} x \mathrm{d} t\nonumber\\
      &+\ve \int_0^T \!\int_{\Omega}   \nabla \HH:(\nn_\ve\otimes \nn_\ve)\(
	|\nabla Q_\ve|^2-|\Pi_{Q_\ve} \nabla Q_\ve|^2\)\, \mathrm{d} x \mathrm{d} t \nonumber\\
	&-\ve\int_0^T \!\int_{\Omega}   \sum_{i,j=1}^3(\nabla \HH)_{ij}   \Big((\p_i Q_\ve-\Pi_{Q_\ve} \p_i Q_\ve):(\p_j Q_\ve-\Pi_{Q_\ve} \p_j Q_\ve)\Big)\, \mathrm{d} x \mathrm{d} t ,\label{J1}\\
	J_\ve^2\triangleq &-\int_0^T \!\int_{\Omega}   \(\p_t  \bxi+\left((\HH+\vv) \cdot \nabla\right) \bxi 
	+\left(\nabla \HH+\nabla \tilde{\vv}\right)^{T} \bxi\)\cdot (\nn_\ve-\bxi) |\nabla \psi_\ve|\, \mathrm{d} x \mathrm{d} t\nonumber \\
	&-\int_0^T \!\int_{\Omega}   \Big(\p_t  \bxi+\left((\HH+\vv) \cdot \nabla\right) \bxi \Big)\cdot \bxi\, |\nabla \psi_\ve|\, \mathrm{d} x \mathrm{d} t-\int_0^T \!\int_{\Omega}   \left(\nabla \tilde{\vv}\right)^{T} :(\bxi\otimes \bxi) |\nabla \psi_\ve|\,\mathrm{d} x \mathrm{d} t.\label{J2}
	\end{align}
\end{proposition}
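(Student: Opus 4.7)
The strategy follows the scalar blueprint of \cite[Lemma 4.4]{liu}, adapted to incorporate the Navier--Stokes advection. I would start from the energy inequality \eqref{energy inequality} in Definition \ref{weak solutions}. Using the $Q$-equation \eqref{10.19.1} (and $DF_\ve=DF$ since $F_\ve$ differs from $F$ only by the constant $\ve^3$), the nematic dissipation rewrites as $\tfrac{1}{\ve}|\ve\Delta Q_\ve-\tfrac{1}{\ve}DF(Q_\ve)|^2=\ve|\p_tQ_\ve+\vv_\ve\cdot\nabla Q_\ve|^2$. Abbreviating $M:=\ve(\p_tQ_\ve+\vv_\ve\cdot\nabla Q_\ve)$, the inequality furnishes $\bigl[\tfrac12|\vv_\ve|^2+\tfrac{\ve}{2}|\nabla Q_\ve|^2+\tfrac{1}{\ve}F_\ve(Q_\ve)\bigr]_0^T+\int_0^T\!\!\int|\nabla\vv_\ve|^2+\int_0^T\!\!\int\tfrac{1}{\ve}|M|^2\le 0$. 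What remains is to compute $-\tfrac{d}{dt}\int_\O\bxi\cdot\nabla\psi_\ve\,\dd x$ separately and regroup all terms into the three perfect squares displayed in \eqref{time deri 4}, the transport tails \eqref{tail0}--\eqref{tail3}, and the error collectors $J_\ve^1,J_\ve^2$.

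For the anchor term I use $\tfrac{d}{dt}\int\bxi\cdot\nabla\psi_\ve=\int\p_t\bxi\cdot\nabla\psi_\ve-\int(\div\bxi)\p_t\psi_\ve$ (integration by parts is legal because $\bxi$ is compactly supported in $\O$ by \eqref{2.27.1}) and apply the chain rule \eqref{8.11.7} to get $\p_t\psi_\ve=Dd_\ve^F(Q_\ve):\p_tQ_\ve$. Splitting $\p_tQ_\ve=(\p_tQ_\ve+\vv_\ve\cdot\nabla Q_\ve)-\vv_\ve\cdot\nabla Q_\ve$ produces the convective integrand $-\int(\div\bxi)Dd_\ve^F(Q_\ve):(\vv_\ve\cdot\nabla Q_\ve)$ of \eqref{tail1}. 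The material-derivative part combines with $\tfrac{1}{2\ve}|M|^2$ from the dissipation via the polarization identity $\tfrac{1}{2\ve}|M|^2-(\div\bxi)Dd_\ve^F:(M/\ve)=\tfrac{1}{2\ve}|M-(\div\bxi)Dd_\ve^F(Q_\ve)|^2-\tfrac{1}{2\ve}(\div\bxi)^2|Dd_\ve^F(Q_\ve)|^2$ to generate the first square of \eqref{time deri 4}; the residue $-\tfrac{1}{2\ve}(\div\bxi)^2|Dd_\ve^F|^2$ is rewritten, via \eqref{projectionnorm}--\eqref{projection}, as $+\tfrac{1}{2\ve}\bigl|(\div\bxi)|Dd_\ve^F(Q_\ve)|\nn_\ve+\ve|\Pi_{Q_\ve}\nabla Q_\ve|\HH\bigr|^2$ (the square in \eqref{tail1}) minus its cross term and minus $\tfrac{\ve}{2}|\Pi_{Q_\ve}\nabla Q_\ve|^2|\HH|^2$. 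The remaining dissipation $\tfrac{1}{2\ve}|M|^2$ further splits as $\tfrac{1}{2\ve}|\HH_\ve|^2+\tfrac{1}{2\ve}(|M|^2-|\HH_\ve|^2)$, the latter being the third dissipation term; the former combines with the above $-\tfrac{\ve}{2}|\Pi_{Q_\ve}\nabla Q_\ve|^2|\HH|^2$ and an added-and-subtracted $\tfrac{\ve}{2}|\nabla Q_\ve|^2|\HH|^2$ to form the second square $\tfrac{1}{2\ve}\bigl|\HH_\ve-\ve|\nabla Q_\ve|\HH\bigr|^2$ (using \eqref{mean curvature app}), with residual the $|\HH|^2$-difference line of \eqref{tail2}.

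The transport tails are generated by processing $\int\p_t\bxi\cdot\nabla\psi_\ve$ together with the cross term $-\int\HH_\ve\cdot\HH|\nabla Q_\ve|$ produced in expanding the second square. Using \eqref{mean curvature app}, \eqref{10.19.1}, and integration by parts, the cross term rewrites as $-\ve\int\nabla\HH:(\nabla Q_\ve\odot\nabla Q_\ve)+\int(\div\HH)\bigl(\tfrac{\ve}{2}|\nabla Q_\ve|^2+\tfrac{1}{\ve}F_\ve(Q_\ve)\bigr)$; polarizing $\nabla Q_\ve\odot\nabla Q_\ve$ via orthogonality of $\Pi_{Q_\ve}$ and decomposing $\nn_\ve\otimes\nn_\ve=\bxi\otimes\bxi+\bxi\otimes(\nn_\ve-\bxi)+(\nn_\ve-\bxi)\otimes\nn_\ve$ distributes the first piece into the $(\bxi-\nn_\ve)\otimes(\bxi-\nn_\ve)$ line of \eqref{tail2} and the three lines of $J_\ve^1$, while the $(\div\HH)$-integral supplies the bulk part of \eqref{tail3}. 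For $\int\p_t\bxi\cdot\nabla\psi_\ve$ I add and subtract $\int[\p_t\bxi+((\HH+\vv)\cdot\nabla)\bxi+(\nabla\HH+\nabla\tilde{\vv})^\top\bxi]\cdot(\nn_\ve-\bxi)|\nabla\psi_\ve|$ and the $\bxi$-parallel counterpart; by \eqref{2.16.5}--\eqref{2.16.7} and \eqref{2.23.1} these contribute $J_\ve^2$, and the residue, after \eqref{ADM chain rule} and further integration by parts, produces the transported terms of \eqref{tail0} together with the $(1-\bxi\cdot\nn_\ve)|\nabla\psi_\ve|$-line of \eqref{tail3}.

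The main obstacle is purely combinatorial: one must orchestrate three simultaneous square completions so that every cross term is either matched exactly against a component of $-\tfrac{d}{dt}\int\bxi\cdot\nabla\psi_\ve$ or carries a small prefactor ($\bxi-\nn_\ve$, $\min\{d_\Gamma,1\}$, or the projection defect $\nabla Q_\ve-\Pi_{Q_\ve}\nabla Q_\ve$) that routes it into the error collectors $J_\ve^1,J_\ve^2$, which are controlled in the next section. The arithmetic essentially duplicates \cite[Lemma 4.4]{liu}; the only genuinely new ingredients are $(\vv_\ve\cdot\nabla)Q_\ve$ and $(\vv\cdot\nabla)\bxi$, which must be threaded through every integration by parts. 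Because \eqref{10.19.1} is purely advective with no $\nabla\vv_\ve$-stretching coupling, these fluid contributions appear only as the extra integrand in \eqref{tail1} and the $\nabla\tilde{\vv}$ factor in \eqref{tail0}, so no delicate cancellation between the Navier--Stokes and $Q$-tensor stresses is needed at this step.
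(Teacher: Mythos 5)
Your proposal follows essentially the same route as the paper: start from the energy inequality \eqref{energy inequality}, process the calibration term $\int_\Omega\bxi\cdot\nabla\psi_\ve$ via the chain rule \eqref{8.11.7}, rewrite the $\HH_\ve\cdot\HH|\nabla Q_\ve|$ cross term through the energy stress tensor identity (the paper's Lemma \ref{lemma:expansion 1}), and carry out the square completions to absorb the transport. A couple of intermediate signs in your prose are flipped --- for instance the residue of the first polarization is $+\tfrac1{2\ve}(\div\bxi)^2|Dd_\ve^F(Q_\ve)|^2$, not minus --- but these self-cancel, and the regrouping into \eqref{tail0}--\eqref{tail3} together with $J_\ve^1$, $J_\ve^2$ lands as the paper intends.
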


Before proving it, we need the following lemma, in which a key identity has been provided.
\begin{lemma}\label{lemma:expansion 1}
Under the construction \eqref{2.23.2}, the following integral identity holds over the domain $\Omega \times [0, T]$:
	\begin{equation}
	\begin{split} \label{expansion1}
	&\int_0^T \!\int_{\Omega} \nabla \tilde{\vv}: (\bxi \otimes\nn_{\varepsilon})\left|\nabla \psi_{\varepsilon}\right| \, \mathrm{d} x \mathrm{d} t
	-\int_0^T \!\int_{\Omega} (\operatorname{div} \tilde{\vv}) \, \bxi  \cdot \nabla \psi_{\varepsilon} \, \mathrm{d} x \mathrm{d} t \\
	=&\int_0^T \!\int_{\Omega} \nabla \tilde{\vv}: (\bxi-\nn_\ve) \otimes\nn_{\varepsilon}\left|\nabla \psi_{\varepsilon}\right|\, \mathrm{d} x \mathrm{d} t+\int_0^T \!\int_{\Omega} \HH_\ve\cdot \tilde{\vv} |\nabla Q_\ve|\, \mathrm{d} x \mathrm{d} t \\
	&+\int_0^T \!\int_{\Omega}\operatorname{div} \tilde{\vv}  \( \frac{\ve}2 |\nabla Q_\ve|^2  +\frac{1}\ve F_\ve(Q_\ve) -|\nabla \psi_\ve| \)\, \mathrm{d} x \mathrm{d} t +\int_0^T \!\int_{\Omega}\operatorname{div} \tilde{\vv}  |\nabla\psi_\ve|(1-\bxi\cdot\nn_{\varepsilon})\, \mathrm{d} x \mathrm{d} t \\
	&- \ve\int_0^T \!\int_{\Omega} \sum_{i,j=1}^3(\nabla \tilde{\vv})_{ij}\(\p_i Q_\ve: \p_j Q_\ve   \)\, \mathrm{d} x \mathrm{d} t +\int_0^T \!\int_{\Omega} \nabla \tilde{\vv}: (\nn_\ve \otimes\nn_{\varepsilon})\left|\nabla \psi_{\varepsilon}\right|\, \mathrm{d} x \mathrm{d} t.
	\end{split}
	\end{equation}
This identity still holds true when $\tilde{\vv} $ is replaced by $\HH$.
\end{lemma}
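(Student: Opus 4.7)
The plan is to establish the identity by manipulating the left-hand side through a decomposition of the extension direction $\bxi$ together with an integration by parts that mirrors the Ericksen stress structure of the Q-tensor energy.

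First, I would split $\bxi = (\bxi - \nn_\ve) + \nn_\ve$ in the first integral on the LHS; this directly produces the first RHS term $\int \nabla \tilde{\vv}:((\bxi-\nn_\ve)\otimes\nn_\ve)|\nabla\psi_\ve|$ and the last RHS term $\int\nabla\tilde{\vv}:(\nn_\ve\otimes\nn_\ve)|\nabla\psi_\ve|$. Next, using the definition $\nabla\psi_\ve = |\nabla\psi_\ve|\nn_\ve$ together with the add-and-subtract trick, I would rewrite
$$ -\int\operatorname{div}\tilde{\vv}\,\bxi\cdot\nabla\psi_\ve \;=\; -\int\operatorname{div}\tilde{\vv}\,|\nabla\psi_\ve| \;+\; \int\operatorname{div}\tilde{\vv}\,|\nabla\psi_\ve|(1-\bxi\cdot\nn_\ve), $$
which immediately produces the fourth term on the RHS. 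It thus remains to verify the residual identity
$$ -\int\operatorname{div}\tilde{\vv}\,|\nabla\psi_\ve| \;=\; \int\HH_\ve\cdot\tilde{\vv}\,|\nabla Q_\ve| \;+\; \int\operatorname{div}\tilde{\vv}\Big(\tfrac{\ve}{2}|\nabla Q_\ve|^2+\tfrac{1}{\ve}F_\ve(Q_\ve)-|\nabla\psi_\ve|\Big) \;-\; \ve\!\int (\nabla\tilde{\vv})_{ij}(\p_i Q_\ve:\p_j Q_\ve), $$
which, after cancelling the $-\int\operatorname{div}\tilde{\vv}|\nabla\psi_\ve|$ on both sides, reduces to
$$ \int\operatorname{div}\tilde{\vv}\Big(\tfrac{\ve}{2}|\nabla Q_\ve|^2+\tfrac{1}{\ve}F_\ve(Q_\ve)\Big) - \ve\!\int (\nabla\tilde{\vv})_{ij}(\p_i Q_\ve:\p_j Q_\ve) \;=\; -\int\HH_\ve\cdot\tilde{\vv}\,|\nabla Q_\ve|. $$

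The heart of the proof is this last identity, which I would verify by integration by parts. On the one hand, IBP on the first term gives $-\tfrac{\ve}{2}\int\tilde{\vv}\cdot\nabla|\nabla Q_\ve|^2 - \tfrac{1}{\ve}\int\tilde{\vv}\cdot DF(Q_\ve):\nabla Q_\ve$, where I use $F_\ve - F$ is constant and the chain rule for $F$. On the other hand, IBP on the Ericksen-type term, together with the elementary identity $\p_i(\p_i Q_\ve:\p_j Q_\ve) = \Delta Q_\ve:\p_j Q_\ve + \tfrac{1}{2}\p_j|\nabla Q_\ve|^2$, gives $-\ve\int\tilde{\vv}\cdot(\Delta Q_\ve:\nabla Q_\ve) - \tfrac{\ve}{2}\int\tilde{\vv}\cdot\nabla|\nabla Q_\ve|^2$. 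Subtracting, the $\tfrac{\ve}{2}\int\tilde{\vv}\cdot\nabla|\nabla Q_\ve|^2$ terms cancel and we are left with $\int\tilde{\vv}\cdot(\ve\Delta Q_\ve - \tfrac{1}{\ve}DF(Q_\ve)):\nabla Q_\ve$, which equals $-\int\tilde{\vv}\cdot\HH_\ve|\nabla Q_\ve|$ by the very definition \eqref{mean curvature app} of $\HH_\ve$.

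The main obstacle is the bookkeeping of indices and, more substantively, justifying that all boundary terms in the integrations by parts vanish. For this I would invoke $Q_\ve|_{\p\Omega}=0$ and $\vv|_{\p\Omega}=0$ from \eqref{bc of omega}; since $\tilde{\vv}$ is defined a priori only on $\Gamma_t(3\delta)$ (via \eqref{2.23.2}), a standard extension—multiplying by a cutoff equal to one on the support of $\bxi$ and vanishing near $\p\Omega$—suffices, and does not alter the resulting identity on the support of the relevant quantities. The analogous statement with $\HH$ in place of $\tilde{\vv}$ is in fact easier: by construction \eqref{2.16.1}, $\HH$ is compactly supported in $\Gamma_t(2\delta)\subset\subset\Omega$, so every boundary contribution in the IBP trivially vanishes, and one can repeat the above computation verbatim.
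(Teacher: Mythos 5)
Your proof is correct and follows essentially the same route as the paper: split $\bxi=(\bxi-\nn_\ve)+\nn_\ve$, add and subtract $\int\operatorname{div}\tilde{\vv}\,|\nabla\psi_\ve|$, and reduce to the stress-tensor identity $\int\nabla\tilde{\vv}:T_\ve=-\int\HH_\ve\cdot\tilde{\vv}\,|\nabla Q_\ve|$. The only cosmetic difference is that the paper packages your two integrations by parts into the single pointwise identity $\operatorname{div}T_\ve=\HH_\ve|\nabla Q_\ve|$ for the energy stress tensor \eqref{24.1.22.1} before testing against $\tilde{\vv}$, whereas you perform the equivalent IBP directly; your explicit remark that $\tilde{\vv}$ must be understood via a cutoff extension (while $\HH$ is already compactly supported by \eqref{2.16.1}) is a point the paper leaves implicit.
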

\begin{proof}
	Since
\begin{align*}
	&\int_0^T \!\int_{\Omega} \nabla \tilde{\vv}: \(\bxi \otimes\nn_{\varepsilon}\)\left|\nabla \psi_{\varepsilon}\right| \, \mathrm{d} x \mathrm{d} t
	-\int_0^T \!\int_{\Omega} (\operatorname{div} \tilde{\vv}) \, \bxi  \cdot \nabla \psi_{\varepsilon} \, \mathrm{d} x \mathrm{d} t \\
	=&\int_0^T \!\int_{\Omega} \nabla \tilde{\vv}: (\bxi-\nn_\ve) \otimes\nn_{\varepsilon}\left|\nabla \psi_{\varepsilon}\right|\, \mathrm{d} x \mathrm{d} t+\int_0^T \!\int_{\Omega} \nabla \tilde{\vv}: \(\nn_\ve \otimes\nn_{\varepsilon}\)\left|\nabla \psi_{\varepsilon}\right|\, \mathrm{d} x \mathrm{d} t \\
&-\int_0^T \!\int_{\Omega} (\operatorname{div} \tilde{\vv})\, \bxi  \cdot \nabla \psi_{\varepsilon} \, \mathrm{d} x \mathrm{d} t,
\end{align*}
	we simplify the problem by calculating the right-hand-side of \eqref{expansion1}.

For this purpose, we define the energy stress tensor $T_\ve$ by
\begin{align} \label{24.1.22.1}
(T_\ve)_{ij}= \( \frac{\ve}2 |\nabla Q_\ve|^2 +\frac{1}{\ve} F_\ve(Q_\ve)  \) \delta_{ij} - \ve \p_i Q_\ve: \p_j Q_\ve.   \end{align}
	From the definition of \eqref{mean curvature app},
	we derive
	\begin{equation}\label{variation1}
	\operatorname{div} T_\ve
	=-\ve \nabla Q_\ve : \Delta Q_\ve
	+ \frac{1}{\ve} D  F_\ve(Q_\ve) : \nabla Q_\ve=\HH_\ve |\nabla Q_\ve|.
	\end{equation}
	Testing this identity by $\tilde{\vv}$ and integrating by parts, we can conclude that
	\begin{equation}
	\begin{split}
	&\int_0^T \!\int_{\Omega} \HH_\ve\cdot \tilde{\vv} |\nabla Q_\ve|\,\mathrm{d} x \mathrm{d} t  =- \int_0^T \!\int_{\Omega} \nabla \tilde{\vv} \colon T_\ve \,\mathrm{d} x \mathrm{d} t,\\
	=&-  \int_0^T \!\int_{\Omega}\operatorname{div} \tilde{\vv}  \( \frac{\ve}2 |\nabla Q_\ve|^2 +\frac{1}{\ve} F_\ve(Q_\ve)  \)\, \mathrm{d} x \mathrm{d} t+ \e\int_0^T \!\int_{\Omega} \sum_{i,j=1}^3(\nabla \tilde{\vv})_{ij} \(\p_i Q_\ve: \p_j Q_\ve\) \mathrm{d} x \mathrm{d} t,
	\end{split}
	\end{equation}
	hence, by adding zero, we rewrite
	\begin{align*}
	&\int_0^T \!\int_{\Omega} \nabla \tilde{\vv}: \nn_\ve \otimes\nn_{\varepsilon}\left|\nabla \psi_{\varepsilon}\right|\mathrm{d} x \mathrm{d} t\\
	=&\int_0^T \!\int_{\Omega} \HH_\ve\cdot \tilde{\vv} |\nabla Q_\ve|\, \mathrm{d} x \mathrm{d} t+\int_0^T \!\int_{\Omega}\operatorname{div} \tilde{\vv}  \( \frac{\ve}2 |\nabla Q_\ve|^2 +\frac{1}{\ve} F_\ve(Q_\ve) -|\nabla \psi_\ve| \)\,\mathrm{d} x \mathrm{d} t\\
     &+\int_0^T \!\int_{\Omega}\operatorname{div} \tilde{\vv} |\nabla\psi_\ve|\,\mathrm{d} x \mathrm{d} t-\e\int_0^T \!\int_{\Omega} \sum_{i,j=1}^3(\nabla \tilde{\vv})_{ij}\(\p_i Q_\ve: \p_j Q_\ve   \) \mathrm{d} x \mathrm{d} t\\
     &+\int_0^T \!\int_{\Omega} (\nabla \tilde{\vv}): (\nn_\ve \otimes\nn_{\varepsilon})\left|\nabla \psi_{\varepsilon}\right|\mathrm{d} x \mathrm{d} t.
	\end{align*}
	After all of these derivations lead us to the desired result in  \eqref{expansion1}.
\end{proof}

\begin{proof}[Proof of Proposition \ref{lemma exact dt relative entropy}]
We obtains from \eqref{energy inequality} that
\begin{eqnarray}
\begin{split} \label{2.18.5}
&\frac{1}{2} \int_{\Omega} |\mathbf{v}_\varepsilon(T)|^2\mathrm{d} x +\int_{\Omega} \left[\frac{\varepsilon}{2}|\nabla Q_{\varepsilon}|^2+ \frac{1}{\varepsilon}F_\e (Q_{\varepsilon})\right](T)\, \mathrm{d} x\\
&+\int_0^T \!\int_{\Omega} |\nabla \mathbf{v}_\varepsilon|^2 \mathrm{d} x \, \mathrm{d} t+\int_0^T \!\int_{\Omega} \frac{1}{\varepsilon} \Big(\varepsilon \Delta Q_{\varepsilon}-\frac{1}{\varepsilon} D F_\e (Q_{\varepsilon})\Big)^{2} \mathrm{d} x \, \mathrm{d} t\\
\le&\frac{1}{2} \int_{\Omega} |\mathbf{v}_\varepsilon(0)|^2\mathrm{d} x+\int_{\Omega} \left[\frac{\varepsilon}{2}|\nabla Q_{\varepsilon}|^2+ \frac{1}{\varepsilon}F_\e (Q_{\varepsilon})\right](0)\, \mathrm{d} x.
\end{split}
\end{eqnarray}
Thus, combining the above equality with \eqref{8.11.7} and adding zero, we conclude that
\begin{align}
&\int_{\Omega} \frac{1}{2} |\mathbf{v}_\varepsilon(t)|^2+ \left[\frac{\varepsilon}{2}|\nabla Q_{\varepsilon}|^2+ \frac{1}{\varepsilon}F_\e (Q_{\varepsilon})-\boldsymbol{\xi} \cdot \nabla \psi_{\varepsilon}\right](t)\mathrm{d} x \bigg|_{t=0}^{t=T} \notag\\
&+\int_0^T \!\int_{\Omega} |\nabla \mathbf{v}_\varepsilon|^2 \mathrm{d} x \, \mathrm{d} t+\int_0^T \!\int_{\Omega} \frac{1}{\varepsilon}\Big(\varepsilon \Delta Q_{\varepsilon}-\frac{1}{\varepsilon} D F_\e (Q_{\varepsilon})\Big)^{2} \mathrm{d} x \mathrm{d} t \notag\\
\le&\int_0^T \!\int_{\Omega} ( \operatorname{div} \boldsymbol{\xi}) \,D d^F_\e(Q_\e) \colon \p_t Q_\e\, \mathrm{d} x \mathrm{d} t  \notag\\
&+\int_0^T \!\int_{\Omega}  \left((\HH+\vv) \cdot \nabla\right) \bxi\cdot\nabla \psi_\ve\,\mathrm{d} x \mathrm{d} t +\int_0^T \!\int_{\Omega}  \left(\nabla \HH+\nabla \tilde{\vv}\right)^{T} \bxi \cdot\nabla \psi_\ve\,\mathrm{d} x \mathrm{d} t\notag\\
& -\int_0^T \!\int_{\Omega}  \(\p_t  \bxi+\left((\HH+\vv) \cdot \nabla\right) \bxi +\left(\nabla \HH+\nabla \tilde{\vv}\right)^{T} \bxi\)\cdot\nabla \psi_\ve\,\mathrm{d} x \mathrm{d} t \notag.
\end{align}
Since
\begin{align*}
&\int_0^T \!\int_{\Omega} (\HH \cdot \nabla) \boldsymbol{\xi} \cdot \nabla \psi_{\varepsilon} \,\mathrm{d} x \mathrm{d} t =-\int_0^T \!\int_{\Omega} \HH \otimes \boldsymbol{\xi} : \nabla^2 \psi_{\varepsilon} \,\mathrm{d} x \mathrm{d} t-\int_0^T \!\int_{\Omega} (\operatorname{div}\HH)\, \boldsymbol{\xi}\cdot \nabla \psi_{\varepsilon} \,\mathrm{d} x \mathrm{d} t\\
=&\int_0^T \!\int_{\Omega} (\boldsymbol{\xi}\cdot \nabla) \HH \cdot \nabla \psi_{\varepsilon} \,\mathrm{d} x \mathrm{d} t+\int_0^T \!\int_{\Omega} (\operatorname{div}\boldsymbol{\xi}) \,\HH \cdot \nabla \psi_{\varepsilon} \,\mathrm{d} x \mathrm{d} t-\int_0^T \!\int_{\Omega} (\operatorname{div}\HH)\, \boldsymbol{\xi}\cdot \nabla \psi_{\varepsilon} \,\mathrm{d} x \mathrm{d} t,
\end{align*}
we have
\begin{align}
&\int_{\Omega} \frac{1}{2} |\mathbf{v}_\varepsilon(t)|^2+ \left[\frac{\varepsilon}{2}|\nabla Q_{\varepsilon}|^2+ \frac{1}{\varepsilon}F_\e (Q_{\varepsilon})-\boldsymbol{\xi} \cdot \nabla \psi_{\varepsilon}\right](t)\mathrm{d} x \bigg|_{t=0}^{t=T} \notag\\
&+\int_0^T \!\int_{\Omega} |\nabla \mathbf{v}_\varepsilon|^2 \mathrm{d} x \, \mathrm{d} t+\int_0^T \!\int_{\Omega} \frac{1}{\varepsilon}\Big(\varepsilon \Delta Q_{\varepsilon}-\frac{1}{\varepsilon} D F_\e (Q_{\varepsilon})\Big)^{2} \mathrm{d} x \mathrm{d} t \notag\\
\le&\int_0^T \!\int_{\Omega}  (\operatorname{div} \boldsymbol{\xi}) \,D d^F_\e(Q_\e) \colon \p_t Q_\e\, \mathrm{d} x \mathrm{d} t  \notag\\
&+\int_0^T \!\int_{\Omega}  \left(\vv \cdot \nabla\right) \bxi\cdot\nabla \psi_\ve\,\mathrm{d} x \mathrm{d} t +\int_0^T \!\int_{\Omega} (\boldsymbol{\xi}\cdot \nabla) \HH \cdot \nabla \psi_{\varepsilon} \,\mathrm{d} x \mathrm{d} t+\int_0^T \!\int_{\Omega} (\operatorname{div}\boldsymbol{\xi}) \,\HH \cdot \nabla \psi_{\varepsilon} \,\mathrm{d} x \mathrm{d} t\notag\\
&-\int_0^T \!\int_{\Omega} (\operatorname{div}\HH)\, \boldsymbol{\xi}\cdot \nabla \psi_{\varepsilon} \,\mathrm{d} x \mathrm{d} t+\int_0^T \!\int_{\Omega} (\nn_\e \cdot \nabla ) \HH \cdot \bxi |\nabla \psi_\ve| \,\mathrm{d} x \mathrm{d} t\label{24.1.12.16.23} \\
& +\int_0^T \!\int_{\Omega} (\nn_\e \cdot \nabla ) \tilde{\vv} \cdot \bxi |\nabla \psi_\ve| \,\mathrm{d} x \mathrm{d} t -\int_0^T \!\int_{\Omega}  \(\p_t  \bxi+\left((\HH+\vv) \cdot \nabla\right) \bxi +\left(\nabla \HH+\nabla \tilde{\vv}\right)^{T} \bxi\)\cdot\nabla \psi_\ve\,\mathrm{d} x \mathrm{d} t\notag.
\end{align}
Replacing $\tilde{\vv}$ with $\mathbf{H}$ in Lemma \ref{lemma:expansion 1}, and then applying the resulting lemma to \eqref{24.1.12.16.23}, combined with $(\nn_\e \cdot \nabla) \tilde{\vv} \cdot \bxi=\nabla \tilde{\vv}: \bxi \otimes \nn_\e $, we obtain:
\begin{align*}
&\int_{\Omega} \frac{1}{2} |\mathbf{v}_\varepsilon(t)|^2+ \left[\frac{\varepsilon}{2}|\nabla Q_{\varepsilon}|^2+ \frac{1}{\varepsilon}F_\e (Q_{\varepsilon})-\boldsymbol{\xi} \cdot \nabla \psi_{\varepsilon}\right](t)\mathrm{d} x \bigg|_{t=0}^{t=T} \notag\\
&+\int_0^T \!\int_{\Omega} |\nabla \mathbf{v}_\varepsilon|^2 \mathrm{d} x \, \mathrm{d} t+\int_0^T \!\int_{\Omega} \frac{1}{\varepsilon}\Big(\varepsilon \Delta Q_{\varepsilon}-\frac{1}{\varepsilon} D F_\e (Q_{\varepsilon})\Big)^{2} \mathrm{d} x \mathrm{d} t \notag\\
\le&\int_0^T \!\int_{\Omega}  (\operatorname{div} \boldsymbol{\xi}) \,D d^F_\e(Q_\e) \colon \p_t Q_\e\, \mathrm{d} x \mathrm{d} t +\int_0^T \!\int_{\Omega} \HH_\ve\cdot \HH |\nabla Q_\ve|\, \mathrm{d} x \mathrm{d} t \notag\\
&+\int_0^T \!\int_{\Omega}  \left(\vv \cdot \nabla\right) \bxi\cdot\nabla \psi_\ve\,\mathrm{d} x \mathrm{d} t +\int_0^T \!\int_{\Omega} (\operatorname{div}\boldsymbol{\xi}) \,\HH \cdot \nabla \psi_{\varepsilon} \,\mathrm{d} x \mathrm{d} t +\int_0^T \!\int_{\Omega} \nabla \tilde{\vv}: (\bxi \otimes \nn_{\varepsilon})\left|\nabla \psi_{\varepsilon}\right| \,\mathrm{d} x \mathrm{d} t \notag\\
&+\int_0^T \!\int_{\Omega} (\boldsymbol{\xi}\cdot \nabla) \HH \cdot \nabla \psi_{\varepsilon} \,\mathrm{d} x \mathrm{d} t+\int_0^T \!\int_{\Omega} \nabla \HH: (\bxi-\nn_\ve) \otimes\nn_{\varepsilon}\left|\nabla \psi_{\varepsilon}\right|\, \mathrm{d} x \mathrm{d} t\\
&+\int_0^T \!\int_{\Omega}\operatorname{div} \HH \( \frac{\ve}2 |\nabla Q_\ve|^2  +\frac{1}\ve F_\ve(Q_\ve) -|\nabla \psi_\ve| \)\, \mathrm{d} x \mathrm{d} t +\int_0^T \!\int_{\Omega}\operatorname{div} \HH |\nabla\psi_\ve|(1-\bxi\cdot\nn_{\varepsilon})\, \mathrm{d} x \mathrm{d} t\nonumber\\
&-\e\int_0^T \!\int_{\Omega} \sum_{i,j=1}^3(\nabla \HH)_{ij}\(\p_i Q_\ve: \p_j Q_\ve   \)\, \mathrm{d} x \mathrm{d} t +\int_0^T \!\int_{\Omega} \nabla \HH: (\nn_\ve \otimes\nn_{\varepsilon})\left|\nabla \psi_{\varepsilon}\right|\, \mathrm{d} x \mathrm{d} t\\
&-\int_0^T \!\int_{\Omega}  \(\p_t  \bxi+\left((\HH+\vv) \cdot \nabla\right) \bxi +\left(\nabla \HH+\nabla \tilde{\vv}\right)^{T} \bxi\)\cdot (\nn_{\varepsilon}-\bxi) |\nabla \psi_\ve|\,\mathrm{d} x \mathrm{d} t\\
&-\int_0^T \!\int_{\Omega}  \Big(\p_t  \bxi+\left((\HH+\vv) \cdot \nabla\right) \bxi \Big)\cdot \bxi |\nabla \psi_\ve|\,\mathrm{d} x \mathrm{d} t-\int_0^T \!\int_{\Omega}  \left(\nabla \HH+\nabla \tilde{\vv}\right)^{T}: \bxi \otimes \bxi |\nabla \psi_\ve|\,\mathrm{d} x \mathrm{d} t.
\end{align*}
Note that in the fifth line, $\int_0^T \!\int_{\Omega} (\boldsymbol{\xi}\cdot \nabla) \HH \cdot \nabla \psi_{\varepsilon} \,\mathrm{d} x \mathrm{d} t$ and $\int_0^T \!\int_{\Omega} \nabla \HH: (\bxi-\nn_\ve) \otimes\nn_{\varepsilon}\left|\nabla \psi_{\varepsilon}\right|\, \mathrm{d} x \mathrm{d} t$ are considered, as well as $\int_0^T \!\int_{\Omega}  \nabla \HH^{T}: \bxi \otimes \bxi |\nabla \psi_\ve|\,\mathrm{d} x \mathrm{d} t$ in the last line. It is known that:
\begin{small}
\begin{align*}
&\int_0^T \!\int_{\Omega} (\boldsymbol{\xi}\cdot \nabla) \HH \cdot \nabla \psi_{\varepsilon} \,\mathrm{d} x \mathrm{d} t + \int_0^T \!\int_{\Omega} \nabla \HH: (\bxi-\nn_\ve) \otimes\nn_{\varepsilon}\left|\nabla \psi_{\varepsilon}\right|\, \mathrm{d} x \mathrm{d} t - \int_0^T \!\int_{\Omega}  \nabla \HH^{T}: \bxi \otimes \bxi |\nabla \psi_\ve|\,\mathrm{d} x \mathrm{d} t\\
=&\int_0^T \!\int_{\Omega} \nabla \HH: (\nn_\ve-\bxi) \otimes \bxi \left|\nabla \psi_{\varepsilon}\right|\, \mathrm{d} x \mathrm{d} t + \int_0^T \!\int_{\Omega} \nabla \HH: (\bxi-\nn_\ve) \otimes\nn_{\varepsilon}\left|\nabla \psi_{\varepsilon}\right|\, \mathrm{d} x \mathrm{d} t \\
=& -\int_0^T \!\int_{\Omega} \nabla \HH: (\bxi-\nn_\ve) \otimes (\bxi-\nn_\ve) \left|\nabla \psi_{\varepsilon}\right|\, \mathrm{d} x \mathrm{d} t.
\end{align*}
\end{small}
And the remaining terms in the last two lines are defined as $J_\ve^2$. In fact, based on the orthogonality of the orthogonal projection \eqref{projection1}, we derive that 
\begin{align} \label{24.1.20.0}
(\p_i Q_\ve-\Pi_{Q_\ve} \p_i Q_\ve):\Pi_{Q_\ve} \p_j Q_\ve =\Pi_{Q_\ve} \p_i Q_\ve:(\p_j Q_\ve-\Pi_{Q_\ve} \p_j Q_\ve)=0.
\end{align} 
Additionally, by applying \eqref{projection}, we can deduce that the third to last line is equivalent to the presence of $J_\ve^1$ on the right-hand side of \eqref{time deri 4}:
\begin{small}
\begin{eqnarray}
\begin{split} \label{hungry}
&-\e\int_0^T \!\int_{\Omega} \sum_{i,j=1}^3(\nabla \HH)_{ij}\(\p_i Q_\ve: \p_j Q_\ve   \)\, \mathrm{d} x \mathrm{d} t +\int_0^T \!\int_{\Omega} \nabla \HH: (\nn_\ve \otimes\nn_{\varepsilon})\left|\nabla \psi_{\varepsilon}\right|\, \mathrm{d} x \mathrm{d} t\\
=&\int_0^T \!\int_{\Omega} \nabla \HH: \nn_\ve \otimes\nn_{\varepsilon}\left|\nabla \psi_{\varepsilon}\right|\,\mathrm{d} x \mathrm{d} t-\ve\int_0^T \!\int_{\Omega} \sum_{i,j=1}^3(\nabla \HH)_{ij}(\Pi_{Q_\ve} \p_i Q_\ve :\Pi_{Q_\ve} \p_j Q_\ve)\,\mathrm{d} x \mathrm{d} t\\
&-\e\int_0^T \!\int_{\Omega} \sum_{i,j=1}^3(\nabla \HH)_{ij} \, \Big((\p_i Q_\ve-\Pi_{Q_\ve} \p_i Q_\ve):(\p_j Q_\ve-\Pi_{Q_\ve} \p_j Q_\ve)\Big) \, \mathrm{d} x \mathrm{d} t\\
=&\int_0^T \!\int_{\Omega} \nabla \HH: \nn_\ve \otimes\nn_{\varepsilon}\(|\nabla \psi_\ve|-\ve |\nabla Q_\ve|^2\)\,\mathrm{d} x \mathrm{d} t+\ve \int_0^T \!\int_{\Omega} \nabla \HH:(\nn_\ve\otimes \nn_\ve)\( |\nabla Q_\ve|^2-|\Pi_{Q_\ve} \nabla Q_\ve|^2\)\, \mathrm{d} x \mathrm{d} t \\
&- \e\int_0^T \!\int_{\Omega} \sum_{i,j=1}^3(\nabla \HH)_{ij} \, \Big((\p_i Q_\ve-\Pi_{Q_\ve} \p_i Q_\ve):(\p_j Q_\ve-\Pi_{Q_\ve} \p_j Q_\ve)\Big) \, \mathrm{d} x \mathrm{d} t \triangleq J_\ve^1.
\end{split}
\end{eqnarray}
\end{small}
Therefore,
\begin{align*}
&\int_{\Omega} \frac{1}{2} |\mathbf{v}_\varepsilon(t)|^2+ \left[\frac{\varepsilon}{2}|\nabla Q_{\varepsilon}|^2+ \frac{1}{\varepsilon}F_\e (Q_{\varepsilon})-\boldsymbol{\xi} \cdot \nabla \psi_{\varepsilon}\right](t)\mathrm{d} x \bigg|_{t=0}^{t=T} \notag\\
&+\int_0^T \!\int_{\Omega} |\nabla \mathbf{v}_\varepsilon|^2 \mathrm{d} x \, \mathrm{d} t+\int_0^T \!\int_{\Omega} \frac{1}{\varepsilon}\Big(\varepsilon \Delta Q_{\varepsilon}-\frac{1}{\varepsilon} D F_\e (Q_{\varepsilon})\Big)^{2} \mathrm{d} x \mathrm{d} t \notag\\
\le& \int_0^T \!\int_{\Omega}  (\operatorname{div} \boldsymbol{\xi}) \,D d^F_\e(Q_\e) \colon \p_t Q_\e\, \mathrm{d} x \mathrm{d} t  +\int_0^T \!\int_{\Omega} \HH_\ve\cdot \HH |\nabla Q_\ve|\, \mathrm{d} x \mathrm{d} t  \notag\\
&+\int_0^T \!\int_{\Omega}  \left(\vv \cdot \nabla\right) \bxi\cdot\nabla \psi_\ve\,\mathrm{d} x \mathrm{d} t +\int_0^T \!\int_{\Omega} (\operatorname{div}\boldsymbol{\xi}) \,\HH \cdot \nabla \psi_{\varepsilon} \,\mathrm{d} x \mathrm{d} t +\int_0^T \!\int_{\Omega} \nabla \tilde{\vv}: (\bxi \otimes \nn_{\varepsilon})\left|\nabla \psi_{\varepsilon}\right| \,\mathrm{d} x \mathrm{d} t \notag\\
&-\int_0^T \!\int_{\Omega} \nabla \HH: (\bxi-\nn_\ve) \otimes (\bxi-\nn_\ve) \left|\nabla \psi_{\varepsilon}\right|\, \mathrm{d} x \mathrm{d} t\nonumber\\
&+\int_0^T \!\int_{\Omega}\operatorname{div} \HH \( \frac{\ve}2 |\nabla Q_\ve|^2  +\frac{1}\ve F_\ve(Q_\ve) -|\nabla \psi_\ve| \)\, \mathrm{d} x \mathrm{d} t +\int_0^T \!\int_{\Omega}\operatorname{div} \HH |\nabla\psi_\ve|(1-\bxi\cdot\nn_{\varepsilon})\, \mathrm{d} x \mathrm{d} t\nonumber\\
&+J_\ve^1+J_\ve^2.
\end{align*}
Recall \eqref{10.19.1}, which gives $\partial_{t} Q_{\varepsilon}+(\mathbf{v}_{\varepsilon} \cdot \nabla) Q_{\varepsilon} =\Delta Q_{\varepsilon}-\frac{1}{\varepsilon^{2}} D F (Q_{\varepsilon})$. Hence we obtain
\begin{small}
\begin{align*}
&-\int_0^T \!\int_{\Omega} \frac{1}{\varepsilon}\Big(\varepsilon \Delta Q_{\varepsilon}-\frac{1}{\varepsilon} D F_\e (Q_{\varepsilon})\Big)^{2} \mathrm{d} x \mathrm{d} t +\int_0^T \!\int_{\Omega}  (\operatorname{div} \boldsymbol{\xi}) \,D d^F_\e(Q_\e) \colon \p_t Q_\e\, \mathrm{d} x \mathrm{d} t  \notag\\
&+\int_0^T \!\int_{\Omega} \HH_\ve\cdot \HH |\nabla Q_\ve|\, \mathrm{d} x \mathrm{d} t +\int_0^T \!\int_{\Omega}(\operatorname{div}\boldsymbol{\xi}) \,\HH \cdot \nabla \psi_{\varepsilon} \,\mathrm{d} x \mathrm{d} t  \notag\\
=& -\int_0^T \!\int_{\Omega} {\varepsilon}\Big(\partial_{t} Q_{\varepsilon}+(\mathbf{v}_{\varepsilon} \cdot \nabla) Q_{\varepsilon}\Big)^{2} \mathrm{d} x \mathrm{d} t +\int_0^T \!\int_{\Omega}  (\operatorname{div} \boldsymbol{\xi}) \,D d^F_\e(Q_\e) \colon \Big(\partial_{t} Q_{\varepsilon}+(\mathbf{v}_{\varepsilon} \cdot \nabla) Q_{\varepsilon}\Big)\, \mathrm{d} x \mathrm{d} t  \notag\\
&-\int_0^T \!\int_{\Omega} (\operatorname{div} \boldsymbol{\xi}) \,D d^F_\e(Q_\e) \colon (\mathbf{v}_{\varepsilon} \cdot \nabla) Q_{\varepsilon}\, \mathrm{d} x \mathrm{d} t +\int_0^T \!\int_{\Omega} (\operatorname{div}\boldsymbol{\xi}) \,\HH \cdot \nabla \psi_{\varepsilon} \,\mathrm{d} x \mathrm{d} t +\int_0^T \!\int_{\Omega} \HH_\ve\cdot \HH |\nabla Q_\ve|\, \mathrm{d} x \mathrm{d} t  \notag\\
=& -\frac1{2\ve} \int_0^T \!\int_{\Omega}  \Big|\ve \Big(\partial_{t} Q_{\varepsilon}+(\mathbf{v}_{\varepsilon} \cdot \nabla) Q_{\varepsilon}\Big)\Big|^2\\
&\qquad\qquad -2(\div \bxi)  D d^F_\ve(Q_\ve): \ve \Big(\partial_{t} Q_{\varepsilon}+(\mathbf{v}_{\varepsilon} \cdot \nabla) Q_{\varepsilon}\Big)+(\div \bxi)^2 | D d^F_\ve(Q_\ve)|^2 \, \mathrm{d} x \mathrm{d} t\notag\\
& - \int_0^T \!\int_{\Omega} \frac1{2\ve} \Big|\ve \Big(\partial_{t} Q_{\varepsilon}+(\mathbf{v}_{\varepsilon} \cdot \nabla) Q_{\varepsilon}\Big)\Big|^2 \, \mathrm{d} x \mathrm{d} t+ \int_0^T \!\int_{\Omega} \frac1{2\ve}(\div \bxi)^2 | D d^F_\ve(Q_\ve)|^2 \, \mathrm{d} x \mathrm{d} t \notag\\
&-\int_0^T \!\int_{\Omega}  (\operatorname{div} \boldsymbol{\xi}) \,D d^F_\e(Q_\e) \colon (\mathbf{v}_{\varepsilon} \cdot \nabla) Q_{\varepsilon}\, \mathrm{d} x \mathrm{d} t +\int_0^T \!\int_{\Omega} (\operatorname{div}\boldsymbol{\xi}) \,\HH \cdot \nabla \psi_{\varepsilon} \,\mathrm{d} x \mathrm{d} t \notag\\
& - \frac1{2\ve}\int_0^T \!\int_{\Omega}  |\HH_\ve|^2 - 2\HH_\ve\cdot \ve |\nabla Q_\ve| \HH + \ve^2 |\nabla Q_\ve|^2 |\HH|^2 \,\mathrm{d} x \mathrm{d} t
	+ \frac1{2\ve} \int_0^T \!\int_{\Omega}  |\HH_\ve|^2 + \ve^2 |\nabla Q_\ve|^2 |\HH|^2 \,\mathrm{d} x \mathrm{d} t \notag\\
=&  -\frac1{2\ve} \int_0^T \!\int_{\Omega} \Big|\ve \Big(\partial_{t} Q_{\varepsilon}+(\mathbf{v}_{\varepsilon} \cdot \nabla) Q_{\varepsilon}\Big)- (\div \bxi) D d^F_\ve(Q_\ve) \Big|^2
 \,\mathrm{d} x \mathrm{d} t - \frac1{2\ve} \int_0^T \!\int_{\Omega} \Big|\HH_\ve - \ve |\nabla Q_\ve| \HH \Big|^2 \,\mathrm{d} x \mathrm{d} t \notag\\
& - \frac1{2\ve} \int_0^T \!\int_{\Omega}  \Big|\ve \Big(\partial_{t} Q_{\varepsilon}+(\mathbf{v}_{\varepsilon} \cdot \nabla) Q_{\varepsilon}\Big)\Big|^2-|\HH_\ve|^2  \,\mathrm{d} x \mathrm{d} t -\int_0^T \!\int_{\Omega}  (\operatorname{div} \boldsymbol{\xi}) \,D d^F_\e(Q_\e) \colon (\mathbf{v}_{\varepsilon} \cdot \nabla) Q_{\varepsilon}\, \mathrm{d} x \mathrm{d} t \notag\\
& + \frac1{2\ve} \int_0^T \!\int_{\Omega}  (\div \bxi)^2  |D d^F_\ve(Q_\ve)|^2 + 2\ve(\div \bxi)  \HH \cdot \nabla \psi_\ve + |\ve \Pi_{Q_\ve}\nabla Q_\ve|^2 |\HH|^2 \,\mathrm{d} x \mathrm{d} t \\
&+\frac\ve{2} \int_0^T \!\int_{\Omega} \left(|\nabla Q_\ve|^2- | \Pi_{Q_\ve}\nabla Q_\ve|^2\right)  |\HH|^2 \,\mathrm{d} x \mathrm{d} t \notag \\
=&  -\frac1{2\ve} \int_0^T \!\int_{\Omega} \Big|\ve \Big(\partial_{t} Q_{\varepsilon}+(\mathbf{v}_{\varepsilon} \cdot \nabla) Q_{\varepsilon}\Big)- (\div \bxi) D d^F_\ve(Q_\ve) \Big|^2
 \,\mathrm{d} x \mathrm{d} t - \frac1{2\ve} \int_0^T \!\int_{\Omega} \Big|\HH_\ve - \ve |\nabla Q_\ve| \HH \Big|^2 \,\mathrm{d} x \mathrm{d} t \notag\\
& - \frac1{2\ve} \int_0^T \!\int_{\Omega}  \Big|\ve \Big(\partial_{t} Q_{\varepsilon}+(\mathbf{v}_{\varepsilon} \cdot \nabla) Q_{\varepsilon}\Big)\Big|^2-|\HH_\ve|^2  \,\mathrm{d} x \mathrm{d} t -\int_0^T \!\int_{\Omega}  (\operatorname{div} \boldsymbol{\xi}) \,D d^F_\e(Q_\e) \colon (\mathbf{v}_{\varepsilon} \cdot \nabla) Q_{\varepsilon}\, \mathrm{d} x \mathrm{d} t \notag\\
& +\frac 1{2\ve} \int_0^T \!\int_{\Omega}   \Big| (\operatorname{div} \bxi) |D d^F_\ve(Q_\ve)|\nn_\ve +\ve |\Pi_{Q_\ve} \nabla Q_\ve| \HH\Big|^2\,\mathrm{d} x \mathrm{d} t +\frac\ve{2} \int_0^T \!\int_{\Omega} \left(|\nabla Q_\ve|^2- | \Pi_{Q_\ve}\nabla Q_\ve|^2\right)  |\HH|^2 \,\mathrm{d} x \mathrm{d} t ,
\end{align*}
\end{small}
where \eqref{projectionnorm} is used in the last step. Combining all of these calculations, we finish the proof.
\end{proof}

By virtue of Proposition \ref{lemma exact dt relative entropy}, it is direct to establish the following estimates.
\begin{proposition} \label{lemma 3.4.26}
Let $(\mathbf{v}_{\varepsilon},Q_{\varepsilon})$ be the weak solution as described in Definition \ref{weak solutions}, then there exists a positive constant $C$, such that the following inequality holds:
\begin{subequations} \label{hope}  
		\begin{align}
&\int_{\Omega} \frac{1}{2} |\mathbf{v}_\varepsilon(t)|^2+ \left[\frac{\varepsilon}{2}|\nabla Q_{\varepsilon}|^2+ \frac{1}{\varepsilon}F_\e (Q_{\varepsilon})-\boldsymbol{\xi} \cdot \nabla \psi_{\varepsilon}\right](t)\mathrm{d} x \bigg|_{t=0}^{t=T} \notag\\
&+\int_0^T \!\int_{\Omega} |\nabla \mathbf{v}_\varepsilon|^2 \mathrm{d} x \, \mathrm{d} t+\frac 1{2\ve}\int_0^T \!\int_{\Omega}  \left| \ve \Big(\partial_{t} Q_{\varepsilon}+(\mathbf{v}_{\varepsilon} \cdot \nabla) Q_{\varepsilon}\Big)  -(\operatorname{div} \bxi) D d^F_\ve(Q_\ve)  \right|^2\mathrm{d} x \mathrm{d} t\notag\\
		&+\frac 1{2\ve}\int_0^T \!\int_{\Omega}  \Big| \HH_\ve-\ve|\nabla Q_\ve| \HH \Big|^2 \, \mathrm{d} x \mathrm{d} t+{\frac 1{2\ve}\int_0^T \!\int_{\Omega}    \(\Big|\ve \Big(\partial_{t} Q_{\varepsilon}+(\mathbf{v}_{\varepsilon} \cdot \nabla) Q_{\varepsilon}\Big)\Big|^2 -   |\HH_\ve|^2\)\,\mathrm{d} x \mathrm{d} t} \label{8.14.21} \\
		\le& C\int_0^T E \left[\mathbf{v}_{\varepsilon}, Q_{\varepsilon} \mid \mathbf{v}, \chi\right]\, \mathrm{d} t-\int_0^T \!\int_{\Omega} (\operatorname{div} \boldsymbol{\xi}) \, D d^F_\e(Q_\e) \colon (\vv_\e \cdot \nabla) Q_\e\, \mathrm{d} x \mathrm{d} t \notag\\
&+ \int_0^T \!\int_{\Omega}   (\vv \cdot \nabla) \bxi\cdot\nabla \psi_\ve+ \nabla \tilde{\vv} : \bxi \otimes \nn_\ve |\nabla \psi_\ve|\,\mathrm{d} x \mathrm{d} t. \notag 
		\end{align}
\end{subequations}
\end{proposition}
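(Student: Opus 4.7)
My plan is to take Proposition \ref{lemma exact dt relative entropy} as a starting point and systematically absorb every term on the right-hand side except the two transport-type terms in \eqref{tail0} into the quantity $C \int_0^T E[\mathbf{v}_\varepsilon, Q_\varepsilon\mid \mathbf{v}, \chi]\, dt$. In other words, \eqref{hope} is simply \eqref{time deri 4} after all tail contributions are controlled. The three main tool kits I would rely on are: (i) the pointwise near-the-interface decay estimates for $\bxi$, namely \eqref{2.20.1}, \eqref{2.16.5}, \eqref{2.16.7}, which show that the deviations of $\bxi$ from being a genuine transported normal vector field are of order $\min\{d_\Gamma,1\}$; (ii) the relative-energy coercivity bounds \eqref{energy bound0}--\eqref{energy bound3} from Lemma \ref{prop2.1}; and (iii) the sharp quasi-distance bound $|Dd^F_\varepsilon(q)|\le \sqrt{2F_\varepsilon(q)}$ from Lemma \ref{basic control est}.

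First I would dispatch the two tail terms in \eqref{tail3} involving $\operatorname{div}\HH$. Both the bulk density excess $\tfrac{\varepsilon}2|\nabla Q_\varepsilon|^2+\tfrac1\varepsilon F_\varepsilon(Q_\varepsilon)-|\nabla\psi_\varepsilon|$ and the alignment defect $(1-\bxi\cdot\nn_\varepsilon)|\nabla\psi_\varepsilon|$ are pointwise nonnegative and integrate to at most $E[\mathbf{v}_\varepsilon,Q_\varepsilon\mid\mathbf{v},\chi](t)$ by \eqref{energy bound0} and \eqref{energy bound1}; since $\|\operatorname{div}\HH\|_{L^\infty}\le C$ by \eqref{7.25.1}, this yields the required bound. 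Next, the two terms in \eqref{tail2} are analogous: the excess $|\nabla Q_\varepsilon|^2-|\Pi_{Q_\varepsilon}\nabla Q_\varepsilon|^2$ is controlled by \eqref{energy bound2}, while $\nabla\HH:(\bxi-\nn_\varepsilon)\otimes(\bxi-\nn_\varepsilon)|\nabla\psi_\varepsilon|$ is bounded using $|\bxi-\nn_\varepsilon|^2\le 2(1-\bxi\cdot\nn_\varepsilon)$ and \eqref{energy bound1}. The three pieces of $J^1_\varepsilon$ in \eqref{J1} are of the same flavor: the first uses the positive quantity $|\nabla\psi_\varepsilon|-\varepsilon|\nabla Q_\varepsilon|^2$ absorbed via \eqref{energy bound0}--\eqref{energy bound2}, and the remaining two are controlled directly by \eqref{energy bound2} since $|\nabla Q_\varepsilon-\Pi_{Q_\varepsilon}\nabla Q_\varepsilon|^2\le \tfrac{2}{\varepsilon}E[\cdots](t)$.

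For $J^2_\varepsilon$ in \eqref{J2}, the first line contains a factor bounded by $C\min\{d_\Gamma,1\}$ by \eqref{2.16.5} together with $|\nn_\varepsilon-\bxi|^2\lesssim 1-\bxi\cdot\nn_\varepsilon$, and closes by combining \eqref{energy bound1} and \eqref{energy bound3}. The term with $\bxi\cdot(\p_t\bxi+\cdots)$ is quadratic in $d_\Gamma$ by \eqref{2.16.7}, hence controlled by \eqref{energy bound3}. The final piece $-\int (\nabla\tilde{\vv})^\top:\bxi\otimes\bxi|\nabla\psi_\varepsilon|$ is cancelled against part of the second summand of \eqref{tail0}: one rewrites $\nabla\tilde{\vv}:\bxi\otimes\nn_\varepsilon|\nabla\psi_\varepsilon|=\nabla\tilde{\vv}:\bxi\otimes(\nn_\varepsilon-\bxi)|\nabla\psi_\varepsilon|+\nabla\tilde{\vv}:\bxi\otimes\bxi|\nabla\psi_\varepsilon|$ and keeps the alignment-defect part inside the $C\int_0^T E\,dt$ term, handing the remainder to \eqref{tail0}. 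In this way both surviving tail terms on the RHS of \eqref{hope} are those of \eqref{tail0} exactly as stated.

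The main obstacle I expect is the square term \eqref{tail1}. Expanding it gives three contributions: $\tfrac{1}{2\varepsilon}(\operatorname{div}\bxi)^2|Dd^F_\varepsilon(Q_\varepsilon)|^2$, the cross term $(\operatorname{div}\bxi)\,\HH\cdot\nabla\psi_\varepsilon$, and $\tfrac{\varepsilon}2|\Pi_{Q_\varepsilon}\nabla Q_\varepsilon|^2|\HH|^2$. Using $\operatorname{div}\bxi=-\HH\cdot\bxi+r_1$ with $|r_1|\le C\min\{d_\Gamma,1\}$ from \eqref{2.20.1}, the three terms combine into $\tfrac{\varepsilon}2|\HH\cdot\bxi|^2(|\Pi_{Q_\varepsilon}\nabla Q_\varepsilon|^2-\varepsilon^{-2}|Dd^F_\varepsilon(Q_\varepsilon)|^2)$ plus a cross term $(\operatorname{div}\bxi)\HH\cdot(\nabla\psi_\varepsilon-|\Pi_{Q_\varepsilon}\nabla Q_\varepsilon|Dd^F_\varepsilon(Q_\varepsilon)\cdot\bxi/\cdots)$ and remainders; by Lemma \ref{basic control est} the first group is nonpositive after invoking the squared-defect identity in \eqref{energy bound2}, while the remainders are estimated by \eqref{energy bound1}, \eqref{energy bound3} thanks to the $\min\{d_\Gamma,1\}$ factor in $r_1$. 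Completing these squares carefully so that no $\varepsilon^{-1}$-divergent piece survives is the technical heart of the argument. Once this is achieved, assembling all of the above estimates yields \eqref{hope}, completing the proof.
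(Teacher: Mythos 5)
Your overall road map---start from Proposition \ref{lemma exact dt relative entropy} and absorb all tail terms except the two in \eqref{tail0} into $C\int_0^T E\,\mathrm{d}t$ using the coercivity bounds of Lemma \ref{prop2.1}, the near-interface decay of $\bxi$, and $|\nn_\varepsilon-\bxi|^2\le 2(1-\nn_\varepsilon\cdot\bxi)$---is the correct one, and your handling of \eqref{tail2}, \eqref{tail3}, and the first two lines of $J^2_\varepsilon$ matches the paper. But there are two genuine gaps.

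First, for the final piece of $J^2_\varepsilon$, the paper observes that $(\nabla\tilde{\vv})^{\top}\colon\bxi\otimes\bxi=(\bxi\cdot\nabla)\tilde{\vv}\cdot\bxi=0$ by the structural condition \eqref{2.23.1}, so this term vanishes identically and \eqref{tail0} passes through untouched. Your ``cancellation'' against the rewrite of the second summand of \eqref{tail0} is vacuous (the two pieces you cancel, $\nabla\tilde{\vv}\colon\bxi\otimes\bxi$ and $(\nabla\tilde{\vv})^{\top}\colon\bxi\otimes\bxi$, are the same expression and both vanish), and the proposal to absorb the residual $\int\nabla\tilde{\vv}\colon\bxi\otimes(\nn_\varepsilon-\bxi)|\nabla\psi_\varepsilon|$ into $C\int E\,\mathrm{d}t$ fails: since this integrand has only a single factor of $\nn_\varepsilon-\bxi$, Cauchy--Schwarz gives
\(
\int|\nn_\varepsilon-\bxi|\,|\nabla\psi_\varepsilon|\,\mathrm{d}x\lesssim\bigl(\int|\nn_\varepsilon-\bxi|^2|\nabla\psi_\varepsilon|\,\mathrm{d}x\bigr)^{1/2}\bigl(\int|\nabla\psi_\varepsilon|\,\mathrm{d}x\bigr)^{1/2}\lesssim\sqrt{E},
\)
which is \emph{not} $O(E)$. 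You must keep the full term $\int\nabla\tilde{\vv}\colon\bxi\otimes\nn_\varepsilon|\nabla\psi_\varepsilon|$ on the right-hand side of \eqref{hope} and simply note the $J^2_\varepsilon$ piece is zero.

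Second, your treatment of \eqref{tail1} contains an algebraic error and an invalid sign claim. Expanding the square, substituting $\operatorname{div}\bxi\to-\HH\cdot\bxi$, and replacing $\HH\cdot\nn_\varepsilon$ by $\HH\cdot\bxi$ and $|\HH|$ by $|\HH\cdot\bxi|$ (up to controllable errors), the three terms combine into
\(
\tfrac{(\HH\cdot\bxi)^2}{2}\bigl(\tfrac{1}{\sqrt{\varepsilon}}|Dd^F_\varepsilon(Q_\varepsilon)|-\sqrt{\varepsilon}|\Pi_{Q_\varepsilon}\nabla Q_\varepsilon|\bigr)^2
\)
---a \emph{square of a difference}, not the difference of squares you wrote. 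This expression is nonnegative, not nonpositive, and the appropriate tool is \eqref{energy bound1} (which bounds exactly $\int\bigl(\sqrt{\varepsilon}|\Pi_{Q_\varepsilon}\nabla Q_\varepsilon|-\tfrac{1}{\sqrt{\varepsilon}}|Dd^F_\varepsilon|\bigr)^2$ by $CE$), not Lemma \ref{basic control est} plus \eqref{energy bound2}. Lemma \ref{basic control est} only gives a one-sided bound $|Dd^F_\varepsilon|\le\sqrt{2F_\varepsilon}$ and cannot deliver a pointwise sign on $|\Pi_{Q_\varepsilon}\nabla Q_\varepsilon|^2-\varepsilon^{-2}|Dd^F_\varepsilon|^2$. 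The paper avoids all of this by never expanding: it splits the vector inside the square as
\(
(\operatorname{div}\bxi)\bigl(|Dd^F_\varepsilon|-\varepsilon|\Pi_{Q_\varepsilon}\nabla Q_\varepsilon|\bigr)\nn_\varepsilon+(\operatorname{div}\bxi)\varepsilon|\Pi_{Q_\varepsilon}\nabla Q_\varepsilon|(\nn_\varepsilon-\bxi)+\varepsilon|\Pi_{Q_\varepsilon}\nabla Q_\varepsilon|\bigl(\HH+(\operatorname{div}\bxi)\bxi\bigr),
\)
applies the triangle inequality, and bounds each piece separately using \eqref{energy bound1}, $|\nn_\varepsilon-\bxi|^2\le 2(1-\nn_\varepsilon\cdot\bxi)$, and \eqref{2.20.1} together with \eqref{energy bound3}. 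Your expand-then-recombine route could in principle be made to work, but the bookkeeping of the errors from the substitutions $\HH\cdot\nn_\varepsilon\to\HH\cdot\bxi$, $|\HH|\to|\HH\cdot\bxi|$, and $\operatorname{div}\bxi\to-\HH\cdot\bxi$ is precisely the technical content that is left incomplete (and partly incorrect) in your sketch.
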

\begin{proof}
The estimates are analogous to \cite[Proposition 4.3]{liu}. Recalling that
\begin{align} \label{10.8.23}
|\nn_\ve- \bxi |^2 \leq 2 (1-\nn_\ve\cdot\bxi),
\end{align}
then it follows from \eqref{2.20.1} and \eqref{energy bound1} that
\begin{align*}
&\frac 1{2\ve} \int_0^T \!\int_{\Omega}   \Big| (\operatorname{div} \bxi) |D d^F_\ve(Q_\ve)|\nn_\ve +\ve |\Pi_{Q_\ve} \nabla Q_\ve| \HH\Big|^2\,\mathrm{d} x \mathrm{d} t\\
	\leq &\int_0^T \!\int_{\Omega}  \left|(\operatorname{div} \bxi)
	\left(\frac{1}{\sqrt{\ve}} |D d^F_\ve(Q_\ve)|-
	 \sqrt{\ve} |\Pi_{Q_\ve} \nabla Q_\ve| \right)
	\nn_\ve\right|^2
	\,\mathrm{d} x \mathrm{d} t\\
&+\int_0^T \!\int_{\Omega}  \Big|(\operatorname{div}\bxi)
	\sqrt{\ve}  |\Pi_{Q_\ve} \nabla Q_\ve|
	(\nn_\ve-\bxi)\Big|^2
	\,\mathrm{d} x \mathrm{d} t + \int_0^T \!\int_{\Omega} \Big|
	(\HH +(\operatorname{div} \bxi) \bxi )  \sqrt{\ve} |\Pi_{Q_\ve} \nabla Q_\ve| \Big|^2
	\,\mathrm{d} x \mathrm{d} t\\
\le& C \int_0^T E \left[\mathbf{v}_{\varepsilon}, Q_{\varepsilon} \mid \mathbf{v}, \chi\right]\, \mathrm{d} t.
\end{align*}

Using \eqref{energy bound2}, \eqref{energy bound1} and \eqref{10.8.23} again,  it becomes evident that
\begin{align*}
\text{\eqref{tail2}+\eqref{tail3}} \le C \int_0^T E \left[\mathbf{v}_{\varepsilon}, Q_{\varepsilon} \mid \mathbf{v}, \chi\right]\, \mathrm{d} t.
\end{align*}

To control $J_\ve^1$, we use the facts that $\mathbf{n}_{\varepsilon}=\mathbf{n}_{\varepsilon}-\boldsymbol{\xi}+\boldsymbol{\xi}$ and $\nabla \HH:
\mathbf{n}_{\varepsilon} \otimes \boldsymbol{\xi}=(\boldsymbol{\xi} \cdot \nabla) \HH \cdot \mathbf{n}_{\varepsilon}$. By referring to \eqref{2.23.1}, we can see that
\begin{align*}
\int_0^T \!\int_{\Gamma_t(\frac{\delta}{2})} \nabla \HH: \nn_\ve \otimes \bxi \(|\nabla \psi_\ve|-\ve |\nabla Q_\ve|^2\)\, \mathrm{d} x \mathrm{d} t=0.
\end{align*}
Recall \eqref{24.1.20.0} that $(\p_i Q_\ve-\Pi_{Q_\ve} \p_i Q_\ve):\Pi_{Q_\ve} \p_j Q_\ve =\Pi_{Q_\ve} \p_i Q_\ve:(\p_j Q_\ve-\Pi_{Q_\ve} \p_j Q_\ve)=0$, then we can conclude that
$$
|\nabla Q_\ve|^2-|\Pi_{Q_\ve} \nabla Q_\ve|^2 \le \left|\nabla Q_\ve-\Pi_{Q_\ve}\nabla Q_\ve\right|^2.
$$
Consequently,
\begin{align*}
J_\ve^1 =&\int_0^T \!\int_{\Omega}   \nabla \HH: \nn_\ve \otimes\nn_{\varepsilon}\(|\nabla \psi_\ve|-\ve |\nabla Q_\ve|^2\)\, \mathrm{d} x \mathrm{d} t\nonumber\\
      &+\ve \int_0^T \!\int_{\Omega}   \nabla \HH:(\nn_\ve\otimes \nn_\ve)\(
	|\nabla Q_\ve|^2-|\Pi_{Q_\ve} \nabla Q_\ve|^2\)\, \mathrm{d} x \mathrm{d} t \nonumber\\
	&-\ve\int_0^T \!\int_{\Omega}   \sum_{i,j=1}^3(\nabla \HH)_{ij}   \Big((\p_i Q_\ve-\Pi_{Q_\ve} \p_i Q_\ve):(\p_j Q_\ve-\Pi_{Q_\ve} \p_j Q_\ve)\Big)\, \mathrm{d} x \mathrm{d} t \\
\leq &\int_0^T \!\int_{\Omega} \nabla \HH: \big( \nn_\ve \otimes (\nn_\ve-\bxi)\big) \(|\nabla \psi_\ve|-\ve |\nabla Q_\ve|^2\)\, \mathrm{d} x \mathrm{d} t \nonumber\\
	&+\int_0^T \!\int_{\Omega} \nabla \HH: \nn_\ve \otimes \bxi \(|\nabla \psi_\ve|-\ve |\nabla Q_\ve|^2\)\, \mathrm{d} x \mathrm{d} t+C \int_0^T E \left[\mathbf{v}_{\varepsilon}, Q_{\varepsilon} \mid \mathbf{v}, \chi\right]\, \mathrm{d} t\\
	\leq &\|\nabla\HH\|_{L^\infty} \int_0^T \!\int_{\Omega} |\nn_\ve-\bxi| \(\ve |\nabla Q_\ve|^2-\ve |\Pi_{Q_\ve}\nabla Q_\ve|^2+\Big|\ve |\Pi_{Q_\ve}\nabla Q_\ve|^2-|\nabla \psi_\ve|\Big|\)\, \mathrm{d} x \mathrm{d} t\nonumber\\
	&+C \int_0^T \!\int_{\Omega}\min \(d^2_\Gamma,1\)  \(|\nabla \psi_\ve|+\ve |\nabla Q_\ve|^2\)\, \mathrm{d} x \mathrm{d} t +C \int_0^T E \left[\mathbf{v}_{\varepsilon}, Q_{\varepsilon} \mid \mathbf{v}, \chi\right]\, \mathrm{d} t\\
\le& C \int_0^T \!\int_{\Omega} |\nn_\ve-\bxi|  \sqrt{\ve} |\Pi_{Q_\ve}\nabla Q_\ve| \left| \sqrt{\ve} |\Pi_{Q_\ve}\nabla Q_\ve|-\frac{|D d^F_\ve(Q_\ve)|}{\sqrt{\ve}}\right|\, \mathrm{d} x \mathrm{d} t+ C \int_0^T E \left[\mathbf{v}_{\varepsilon}, Q_{\varepsilon} \mid \mathbf{v}, \chi\right]\, \mathrm{d} t\\
\le& C \int_0^T E \left[\mathbf{v}_{\varepsilon}, Q_{\varepsilon} \mid \mathbf{v}, \chi\right]\, \mathrm{d} t,
\end{align*}
where the second-to-last line involves \eqref{projectionnorm}. Additionally, we make frequent use of  \eqref{energy bound2},  \eqref{energy bound1} and \eqref{energy bound3}.

Now we turn our attention to the last term in $J_\ve^2$. We employ \eqref{2.23.1} to infer that 
\begin{align*}
\nabla \tilde{\vv}^{T}: \boldsymbol{\xi} \otimes \boldsymbol{\xi}=(\boldsymbol{\xi} \cdot \nabla) \tilde{\vv}^{T} \cdot \boldsymbol{\xi}=0.
\end{align*}
This implies that
\begin{align*}
-\int_0^T \!\int_{\Omega}   \left(\nabla \tilde{\vv}\right)^{T} :(\bxi\otimes \bxi) |\nabla \psi_\ve|\,\mathrm{d} x \mathrm{d} t=0.
\end{align*}
Thus, using \eqref{2.16.5} and \eqref{2.16.7}, as well as \eqref{energy bound3}, we have
\begin{align*}
J_\ve^2 \le C \int_0^T E \left[\mathbf{v}_{\varepsilon}, Q_{\varepsilon} \mid \mathbf{v}, \chi\right]\, \mathrm{d} t.
\end{align*}
As a consequence of Proposition \ref{lemma exact dt relative entropy}, we have completed the estimates of the inequality.
\end{proof}

\begin{coro} \label{remark 3.1}
The term mentioned in \eqref{8.14.21} above is also positive:
\begin{align*}
\ve^2 |\partial_{t} Q_{\varepsilon}+(\mathbf{v}_{\varepsilon} \cdot \nabla) Q_{\varepsilon} |^2 -   |\HH_\ve|^2+  \Big| \HH_\ve-\ve|\nabla Q_\ve| \HH \Big|^2 \geq ~\ve^2|\partial_{t} Q_{\varepsilon}+(\mathbf{v}_{\varepsilon} \cdot \nabla) Q_{\varepsilon}+(\HH \cdot\nabla) Q_\ve|^2.
\end{align*}
\end{coro}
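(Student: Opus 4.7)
My plan is to treat the inequality as a purely algebraic identity modulo one application of Cauchy--Schwarz. Set $A \triangleq \ve\bigl(\partial_t Q_\ve + (\vv_\ve\cdot\nabla)Q_\ve\bigr)$, which is a symmetric traceless matrix, so that the definition \eqref{mean curvature app} reads
\[
(\HH_\ve)_k = -\,\frac{A:\partial_k Q_\ve}{|\nabla Q_\ve|}, \qquad k=1,2,3.
\]
The inequality to prove is
\[
|A|^2 - |\HH_\ve|^2 + \bigl|\HH_\ve - \ve|\nabla Q_\ve|\HH\bigr|^2 \;\geq\; \bigl|A + \ve(\HH\cdot\nabla)Q_\ve\bigr|^2.
\]

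First I would expand both sides. The left-hand side simplifies after cancellation of $|\HH_\ve|^2$ to $|A|^2 - 2\ve|\nabla Q_\ve|\HH_\ve\cdot\HH + \ve^2|\nabla Q_\ve|^2|\HH|^2$, while the right-hand side expands to $|A|^2 + 2\ve\, A:(\HH\cdot\nabla)Q_\ve + \ve^2 |(\HH\cdot\nabla)Q_\ve|^2$. The key step is then to observe that the two cross terms are identical: using the defining relation for $\HH_\ve$,
\[
|\nabla Q_\ve|\,\HH_\ve\cdot\HH = \sum_{k=1}^3 H_k\, |\nabla Q_\ve|(\HH_\ve)_k = -\sum_{k=1}^3 H_k\, A:\partial_k Q_\ve = -A:(\HH\cdot\nabla)Q_\ve,
\]
so that $-2\ve|\nabla Q_\ve|\HH_\ve\cdot\HH = 2\ve\, A:(\HH\cdot\nabla)Q_\ve$ and the cross terms cancel exactly.

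What remains is the inequality $\ve^2|\nabla Q_\ve|^2 |\HH|^2 \geq \ve^2\bigl|(\HH\cdot\nabla)Q_\ve\bigr|^2$, which follows immediately from the Cauchy--Schwarz inequality applied to the three-term sum $(\HH\cdot\nabla)Q_\ve = \sum_{k=1}^3 H_k\,\partial_k Q_\ve$ in the Hilbert space of symmetric traceless matrices:
\[
\Bigl|\sum_{k=1}^3 H_k\,\partial_k Q_\ve\Bigr|^2 \leq \Bigl(\sum_{k=1}^3 H_k^2\Bigr)\Bigl(\sum_{k=1}^3 |\partial_k Q_\ve|^2\Bigr) = |\HH|^2\,|\nabla Q_\ve|^2.
\]
Combining the two steps yields the desired inequality. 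There is no substantive obstacle: the only subtlety is correctly identifying the index structure of $\HH_\ve$ (a spatial vector formed by contracting the matrix indices of $A$ with those of $\partial_k Q_\ve$), which makes the cross term cancellation work on the nose.
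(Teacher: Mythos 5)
Your proof is correct and follows essentially the same route as the paper's: both rewrite the cross term $-2\ve|\nabla Q_\ve|\,\HH_\ve\cdot\HH$ as $2\ve\,A:(\HH\cdot\nabla)Q_\ve$ via the definition \eqref{mean curvature app}, and then apply the Cauchy--Schwarz bound $|\HH|^2|\nabla Q_\ve|^2 \ge |(\HH\cdot\nabla)Q_\ve|^2$. The only cosmetic difference is that you expand both sides and cancel, while the paper rewrites the left-hand side directly into the perfect-square form.
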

\begin{proof}
Based on \eqref{mean curvature app}, we know that
\begin{align*}
&~\ve^2 |\partial_{t} Q_{\varepsilon}+(\mathbf{v}_{\varepsilon} \cdot \nabla) Q_{\varepsilon} |^2 -   |\HH_\ve|^2+  \Big| \HH_\ve-\ve|\nabla Q_\ve| \HH \Big|^2\\
=&~\ve^2 | \partial_{t} Q_{\varepsilon}+(\mathbf{v}_{\varepsilon} \cdot \nabla) Q_{\varepsilon} |^2+\ve^2 |\HH|^2 |\nabla Q_\ve|^2+2\ve^2 (\HH\cdot \nabla) Q_\ve: \Big(\partial_{t} Q_{\varepsilon}+(\mathbf{v}_{\varepsilon} \cdot \nabla) Q_{\varepsilon}\Big) \\
\geq &~\ve^2 | \partial_{t} Q_{\varepsilon}+(\mathbf{v}_{\varepsilon} \cdot \nabla) Q_{\varepsilon} |^2+\ve^2 |(\HH\cdot \nabla) Q_\ve|^2+2\ve^2 (\HH\cdot \nabla) Q_\ve: \Big(\partial_{t} Q_{\varepsilon}+(\mathbf{v}_{\varepsilon} \cdot \nabla) Q_{\varepsilon}\Big)\\
=&~\ve^2|\partial_{t} Q_{\varepsilon}+(\mathbf{v}_{\varepsilon} \cdot \nabla) Q_{\varepsilon}+(\HH \cdot\nabla) Q_\ve|^2.
\end{align*}
Hence the inequality is proved.
\end{proof}

\begin{coro}  \label{remark 3.2}
Let $\mathbf{w}=\vv_\e-\vv$. Then the right-hand-side of \eqref{hope} can be written as
\begin{eqnarray} \label{8.26.1} 
\begin{split}
&C\int_0^T E \left[\mathbf{v}_{\varepsilon}, Q_{\varepsilon} \mid \mathbf{v}, \chi\right]\, \mathrm{d} t-\int_0^T \!\int_{\Omega} (\operatorname{div} \boldsymbol{\xi}) \, D d^F_\e(Q_\e) \colon (\vv_\e \cdot \nabla) Q_\e\, \mathrm{d} x \mathrm{d} t \\
&+ \int_0^T \!\int_{\Omega}   (\vv \cdot \nabla) \bxi\cdot\nabla \psi_\ve+ \nabla \tilde{\vv} : \bxi \otimes \nn_\ve |\nabla \psi_\ve|\,\mathrm{d} x \mathrm{d} t \\
=&C \int_0^T E \left[\mathbf{v}_{\varepsilon}, Q_{\varepsilon} \mid \mathbf{v}, \chi\right]\, \mathrm{d} t-\int_0^T \!\int_{\Omega} (\div \bxi) \mathbf{w} \cdot \nn_\e|\nabla \psi_\e|\, \mathrm{d} x \mathrm{d} t +\int_0^T \!\int_{\Omega}  \HH_\ve \cdot \mathbf{v} |\nabla Q_\e| \;\mathrm{d} x \mathrm{d} t \\
&-\int_0^T \!\int_{\Omega}  \HH_\ve \cdot \mathbf{v} |\nabla Q_\e| \;\mathrm{d} x \mathrm{d} t-\int_0^T \!\int_{\Omega} (\div \bxi) \vv \cdot \nn_\e|\nabla \psi_\e|\, \mathrm{d} x \mathrm{d} t \\
&+ \int_0^T \!\int_{\Omega}   \left( \vv \cdot \nabla\right) \bxi\cdot\nn_\e |\nabla \psi_\e|+ (\nn_\e\cdot\nabla) \tilde{\vv}\cdot\bxi |\nabla \psi_\e|\,\mathrm{d} x \mathrm{d} t.    
		\end{split}
\end{eqnarray}
\end{coro}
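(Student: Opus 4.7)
The plan is to view Corollary \ref{remark 3.2} as a purely algebraic rearrangement of the right-hand side of \eqref{hope}. The content is to rewrite the Q-tensor transport term into a scalar transport-of-$\psi_\ve$ term, split the full velocity $\vv_\ve$ into $\mathbf{w}+\vv$, and then insert a vanishing pair of $\HH_\ve\cdot\vv|\nabla Q_\ve|$ terms to put everything in the desired template. No serious estimates are needed at this stage; the point is to reorganize the expression so that $\mathbf{w}=\vv_\ve-\vv$ appears explicitly (for later coupling with the velocity error) and so that the remaining geometric terms are phrased in a form ready for cancellation against the stress-tensor identity of Lemma \ref{lemma:expansion 1}.

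The first step is to handle the $Q_\ve$-transport term. By the chain rule \eqref{ADM chain rule}, for a.e.\ $(x,t)$,
\[
D d^F_\ve(Q_\ve)\colon(\vv_\ve\cdot\nabla)Q_\ve \;=\; \sum_{j=1}^{3} v_{\ve,j}\,D d^F_\ve(Q_\ve)\colon\partial_j Q_\ve \;=\; \vv_\ve\cdot\nabla\psi_\ve \;=\; \vv_\ve\cdot\nn_\ve\,|\nabla\psi_\ve|,
\]
where the last equality is the definition of $\nn_\ve$. Writing $\vv_\ve=\mathbf{w}+\vv$ splits
\[
-\int_0^T\!\!\int_\Omega(\div\bxi)\,D d^F_\ve(Q_\ve)\colon(\vv_\ve\cdot\nabla)Q_\ve\,\dd x\dd t \;=\; -\int_0^T\!\!\int_\Omega(\div\bxi)\,\mathbf{w}\cdot\nn_\ve|\nabla\psi_\ve|\,\dd x\dd t \;-\;\int_0^T\!\!\int_\Omega(\div\bxi)\,\vv\cdot\nn_\ve|\nabla\psi_\ve|\,\dd x\dd t,
\]
which produces two of the lines of \eqref{8.26.1}.

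Next, I would recast the two purely geometric terms. Since $\nabla\psi_\ve=|\nabla\psi_\ve|\nn_\ve$ one has $(\vv\cdot\nabla)\bxi\cdot\nabla\psi_\ve=(\vv\cdot\nabla)\bxi\cdot\nn_\ve|\nabla\psi_\ve|$; and by an index computation,
\[
\nabla\tilde{\vv}\colon\bxi\otimes\nn_\ve \;=\; \sum_{i,j}\partial_j\tilde v_i\,\xi_i(n_\ve)_j \;=\; (\nn_\ve\cdot\nabla)\tilde{\vv}\cdot\bxi.
\]
These two identities give the last line of \eqref{8.26.1}. At this point the identity is already established, except that the display in \eqref{8.26.1} also contains the two terms $\pm\int_0^T\!\!\int_\Omega\HH_\ve\cdot\vv|\nabla Q_\ve|\,\dd x\dd t$, which trivially cancel; inserting them is the final step and is cost-free.

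The only ``obstacle'' is really a bookkeeping one: one must make sure that the bookkeeping of $|\nabla\psi_\ve|=|D d^F_\ve(Q_\ve)|\,|\Pi_{Q_\ve}\nabla Q_\ve|$ from \eqref{projectionnorm} is not needed here (since the chain rule \eqref{ADM chain rule} already delivers $\nabla\psi_\ve$ directly), and that the insertion of the $\HH_\ve\cdot\vv|\nabla Q_\ve|$ pair is motivated by what comes next, namely a stress-tensor/divergence-structure cancellation analogous to the one used in Lemma \ref{lemma:expansion 1}. With $\mathbf{w}\cdot\nn_\ve|\nabla\psi_\ve|$ now isolated, subsequent arguments can convert it into a bilinear form in $(\nabla\mathbf{w},\nabla Q_\ve)$ absorbable by the dissipation $\int|\nabla\vv_\ve-\nabla\vv|^2$ via the estimate of Lemma \ref{lemma2.2}; this motivates, but does not enter, the present corollary.
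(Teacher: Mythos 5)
Your proof is correct and uses essentially the same argument as the paper, which disposes of the corollary by the one-line remark that \eqref{ADM chain rule} and the index identity $\nabla\tilde{\vv}\colon\bxi\otimes\nn_\ve=(\nn_\ve\cdot\nabla)\tilde{\vv}\cdot\bxi$ are applied, together with the split $\vv_\ve=\mathbf{w}+\vv$ and the cost-free insertion of the cancelling $\pm\HH_\ve\cdot\vv|\nabla Q_\ve|$ pair. Your write-up simply spells out these same steps in more detail.
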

\noindent
For this identity, \eqref{ADM chain rule} and $\nabla  \tilde{\vv}:\boldsymbol{\xi} \otimes \mathbf{n}_{\varepsilon} =(\mathbf{n}_{\varepsilon} \cdot \nabla)  \tilde{\vv} \cdot \boldsymbol{\xi}$ are used again.

The terms of the last two lines on the right-hand-side of \eqref{8.26.1} are estimated below.
\begin{proposition} \label{prop 3.2}
There exists a universal constant $C>0$ which is independent of both  $T\in (0,T_0)$ and $\e $,  such that the following estimate holds  for  every $T\in (0,T_0)$:
\begin{small}
\begin{eqnarray} \label{energy3}
\begin{split}
&\int_0^T \!\int_{\Omega}  \left( \vv \cdot \nabla\right) \bxi\cdot\nn_\e |\nabla \psi_\e|+ (\nn_\e\cdot\nabla) \tilde{\vv}\cdot\bxi |\nabla \psi_\e|\,\mathrm{d} x \mathrm{d} t -\int_0^T \!\int_{\Omega}   \vv\cdot \HH_\e|\nabla Q_\e| +(\div \bxi) \vv\cdot \nn_\e|\nabla \psi_\e|\,\mathrm{d} x \mathrm{d} t \\
  \leq  & C \int_0^T E \left[\mathbf{v}_{\varepsilon}, Q_{\varepsilon} \mid \mathbf{v}, \chi\right] \, \mathrm{d} t  +\frac{\lambda}{\e}\int_0^T \!\int_{\Omega}   \left| \ve \Big(\partial_{t} Q_{\varepsilon}+(\mathbf{v}_{\varepsilon} \cdot \nabla) Q_{\varepsilon}\Big)  -(\operatorname{div} \bxi) D d^F_\ve(Q_\ve)  \right|^2\mathrm{d} x \mathrm{d} t,
\end{split}
\end{eqnarray}
\end{small}
provided that a suitably small constant $\lambda$ is chosen.
\end{proposition}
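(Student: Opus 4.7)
The plan is to transform the four terms on the left-hand side of \eqref{energy3} into a single expression involving the residual
$$R_\ve \triangleq \ve\bigl(\partial_t Q_\ve+(\vv_\ve\cdot\nabla)Q_\ve\bigr)-(\div\bxi)\,Dd^F_\ve(Q_\ve),$$
and then to bound it by suitable combinations of relative-entropy quantities. First I would use the material-derivative identity \eqref{mean curvature app} to rewrite $\HH_\ve|\nabla Q_\ve|=-\ve(\partial_tQ_\ve+\vv_\ve\cdot\nabla Q_\ve)\colon\nabla Q_\ve$, giving
$-\int\vv\cdot\HH_\ve|\nabla Q_\ve|=\ve\int(\partial_tQ_\ve+\vv_\ve\cdot\nabla Q_\ve)\colon(\vv\cdot\nabla)Q_\ve$.
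Introducing $\pm(\div\bxi)Dd^F_\ve(Q_\ve)$ and using the chain rule $Dd^F_\ve(Q_\ve)\colon(\vv\cdot\nabla)Q_\ve=\vv\cdot\nabla\psi_\ve=(\vv\cdot\nn_\ve)|\nabla\psi_\ve|$ from \eqref{ADM chain rule}, the cross term cancels exactly the $B_2$-contribution, which reduces the left-hand side of \eqref{energy3} to
$\int(\vv\cdot\nabla)\bxi\cdot\nn_\ve|\nabla\psi_\ve|+\int(\nn_\ve\cdot\nabla)\tilde\vv\cdot\bxi|\nabla\psi_\ve|+\int R_\ve\colon(\vv\cdot\nabla)Q_\ve.$

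Next, I would split $\nabla Q_\ve=\Pi_{Q_\ve}\nabla Q_\ve+\bigl(\nabla Q_\ve-\Pi_{Q_\ve}\nabla Q_\ve\bigr)$. The orthogonal piece is easy: since $Dd^F_\ve(Q_\ve)\colon(\nabla Q_\ve-\Pi_{Q_\ve}\nabla Q_\ve)\equiv 0$, only the material-derivative part of $R_\ve$ survives, and Young's inequality together with \eqref{energy bound2} yields
$\int R_\ve\colon\vv\cdot(\nabla Q_\ve-\Pi_{Q_\ve}\nabla Q_\ve)\le \tfrac{\lambda}{\ve}\int|R_\ve|^2+C\int E$.
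For the projection piece, identity \eqref{projection} gives $R_\ve\colon\vv\cdot\Pi_{Q_\ve}\nabla Q_\ve=\tfrac{(R_\ve\colon Dd^F_\ve)(\vv\cdot\nn_\ve)|\nabla\psi_\ve|}{|Dd^F_\ve|^2}$; after decomposing $\vv=\tilde\vv+(\vv-\tilde\vv)$ and $\nn_\ve=\bxi+(\nn_\ve-\bxi)$, the contribution of $\vv-\tilde\vv$ carries a factor $|d_\Gamma|$ by \eqref{lipschitz uu} and is controlled through Young and \eqref{energy bound3}, whereas the contribution of $\nn_\ve-\bxi$ is absorbed by Young combined with $|\nn_\ve-\bxi|^2\le 2(1-\bxi\cdot\nn_\ve)$ and \eqref{energy bound1}.

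It remains to treat $A_1+A_2$. Here the decisive structural fact is the transport identity $(\bxi\cdot\nabla)\tilde\vv=0$ from \eqref{2.23.1}, which allows me to write $A_2=\int\nabla\tilde\vv\colon\bigl(\bxi\otimes(\nn_\ve-\bxi)\bigr)|\nabla\psi_\ve|$, so that the integrand carries the small factor $\nn_\ve-\bxi$. A parallel rearrangement of $A_1$ (splitting $\vv=\tilde\vv+(\vv-\tilde\vv)$ and using $\nabla\bxi\colon\bxi\otimes\vv=\tfrac12\vv\cdot\nabla|\bxi|^2$ together with $1-|\bxi|^2\lesssim\min\{d_\Gamma^2,1\}$) produces contributions controlled by $|\nn_\ve-\bxi|$ or $|d_\Gamma|$ weighted by $|\nabla\psi_\ve|$. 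The main obstacle is that a naive Cauchy--Schwarz on such linear-in-$|\nn_\ve-\bxi|$ terms only yields $O(\sqrt{\int E})$; the expected $O(\int E)$ bound is recovered by pairing each surviving linear factor either against the residual $R_\ve$ via Young (producing $\tfrac{\lambda}{\ve}\int|R_\ve|^2$ plus a quadratic weight that becomes $|\nn_\ve-\bxi|^2|\nabla\psi_\ve|$ or $d_\Gamma^2|\nabla\psi_\ve|$) or against its companion piece in $A_1+A_2$ through the transport identities \eqref{2.16.5}--\eqref{2.16.7}, after which the coercivities \eqref{energy bound1}, \eqref{energy bound3} close the estimate. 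Summing all contributions gives precisely the stated bound once $\lambda$ is chosen small enough.
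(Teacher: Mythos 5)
Your opening reduction is correct: with $R_\ve \triangleq \ve\bigl(\p_t Q_\ve+(\vv_\ve\cdot\nabla)Q_\ve\bigr)-(\div\bxi)\,D d^F_\ve(Q_\ve)$, the identities \eqref{mean curvature app} and \eqref{ADM chain rule} do turn the left-hand side of \eqref{energy3} into $\int_0^T\!\int_\Omega (\vv\cdot\nabla)\bxi\cdot\nn_\ve|\nabla\psi_\ve| + (\nn_\ve\cdot\nabla)\tilde{\vv}\cdot\bxi|\nabla\psi_\ve| + R_\ve\colon(\vv\cdot\nabla)Q_\ve\,\mathrm{d}x\mathrm{d}t$, and your handling of the piece $\nabla Q_\ve-\Pi_{Q_\ve}\nabla Q_\ve$ (via \eqref{energy bound2}) and of the components $(\vv-\tilde{\vv})\cdot\nn_\ve$ and $\tilde{\vv}\cdot(\nn_\ve-\bxi)$ of the projected piece (via \eqref{lipschitz uu}, \eqref{energy bound3}, \eqref{energy bound1}) is sound. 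The gap is in what is left over. In the projected piece your decomposition still contains the leading component $\tilde{\vv}\cdot\bxi$, i.e.\ $\int (R_\ve\colon D d^F_\ve(Q_\ve))\,(\tilde{\vv}\cdot\bxi)\,|\nabla\psi_\ve|/|D d^F_\ve(Q_\ve)|^2$, which you never address: Young against $R_\ve$ produces the companion term $C\lambda^{-1}\ve\int|\Pi_{Q_\ve}\nabla Q_\ve|^2|\tilde{\vv}\cdot\bxi|^2$, which by \eqref{projectionnorm} is comparable to the total (order-one) energy, not to $E$, so it cannot be absorbed. Likewise, after \eqref{2.23.1} your $A_2$ becomes $\int((\nn_\ve-\bxi)\cdot\nabla)\tilde{\vv}\cdot\bxi\,|\nabla\psi_\ve|$ and the $\tilde{\vv}$-$\bxi$ part of $A_1$ becomes $\tfrac12\int(\tilde{\vv}\cdot\nabla)|\bxi|^2\,|\nabla\psi_\ve|$; both are \emph{linear} in the small quantities ($|\nn_\ve-\bxi|$, respectively $|d_\Gamma|$, since $|\nabla|\bxi|^2|$ decays only linearly in $d_\Gamma$) with $O(1)$ coefficients, hence not bounded by $C\int E$. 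Your proposed remedies do not apply here: these integrals contain no factor of $R_\ve$ to pair with, and \eqref{2.16.5}--\eqref{2.16.7} control combinations built on $\p_t\bxi$, which does not occur in $A_1+A_2$.

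What is missing is precisely the mechanism the paper uses to make every dangerous term carry the factor $\tilde{\vv}-\vv$. The energy--stress-tensor identity of Lemma \ref{lemma:expansion 1} (based on $\div T_\ve=\HH_\ve|\nabla Q_\ve|$, see \eqref{expansion1}) converts $A_2$ into $\int\HH_\ve\cdot\tilde{\vv}\,|\nabla Q_\ve|+\int(\div\tilde{\vv})\,\bxi\cdot\nn_\ve|\nabla\psi_\ve|$ plus terms quadratic in $\nn_\ve-\bxi$ or controlled by Lemma \ref{prop2.1}, and the antisymmetry identity of Lemma \ref{lemma 3.4} (i.e.\ \eqref{symmetry2}, obtained from $\div\div(\tilde{\vv}\otimes\bxi-\bxi\otimes\tilde{\vv})=0$ tested with $\psi_\ve$) trades $\int(\div\tilde{\vv})\,\bxi\cdot\nn_\ve|\nabla\psi_\ve|$ for $\int(\div\bxi)\,\tilde{\vv}\cdot\nn_\ve|\nabla\psi_\ve|-\int(\tilde{\vv}\cdot\nabla)\bxi\cdot\nn_\ve|\nabla\psi_\ve|$. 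Only after these two integration-by-parts identities do the $\HH_\ve$-, $(\div\bxi)$- and $\nabla\bxi$-terms combine into differences $\tilde{\vv}-\vv$, each carrying $|d_\Gamma|$ by \eqref{lipschitz uu}; then Young against $R_\ve$ yields $\tfrac{\lambda}{\ve}\int|R_\ve|^2+C\ve\int d_\Gamma^2|\nabla Q_\ve|^2\le\tfrac{\lambda}{\ve}\int|R_\ve|^2+C\int E$ by \eqref{energy bound3}, and the remaining term $\int((\vv-\tilde{\vv})\cdot\nabla)\bxi\cdot\nn_\ve|\nabla\psi_\ve|$ closes with \eqref{energy bound1} and \eqref{energy bound3}. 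Without Lemma \ref{lemma:expansion 1} and Lemma \ref{lemma 3.4} (or an equivalent step supplying this cancellation), the three leftover terms identified above remain uncontrolled and your argument does not close.
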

 
The following lemma will be used for the estimates presented in Proposition \ref{prop 3.2}.
\begin{lemma} \label{lemma 3.4}
For $\tilde{\vv}$ defined in \eqref{2.23.2}, the following identity holds:
\begin{align} \label{symmetry2}
&0=\int_0^T \!\int_{\Omega} (\div  \bxi)   \tilde{\vv}\cdot \nn_\e|\nabla\psi_\e|\, \mathrm{d} x \mathrm{d} t-\int_0^T \!\int_{\Omega}(\div  \tilde{\vv}) \bxi\cdot  \nn_\e|\nabla\psi_\e|\,\mathrm{d} x \mathrm{d} t- \int_0^T \!\int_{\Omega}(\tilde{\vv} \cdot \nabla) \bxi \cdot \nn_\e|\nabla\psi_\e|\, \mathrm{d} x \mathrm{d} t.
\end{align}
\end{lemma}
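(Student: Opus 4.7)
The plan is to rewrite the integrand as the divergence of an antisymmetric tensor and then integrate by parts so as to pair this antisymmetric tensor with the symmetric Hessian of $\psi_\varepsilon$, forcing the integral to vanish. The first step is to use $\nn_\varepsilon |\nabla\psi_\varepsilon| = \nabla\psi_\varepsilon$, which converts \eqref{symmetry2} into the equivalent claim
\[
\int_0^T\!\int_\Omega \bigl[(\div\bxi)\,\tilde{\vv} - (\div\tilde{\vv})\,\bxi - (\tilde{\vv}\cdot\nabla)\bxi\bigr]\cdot \nabla \psi_\varepsilon \,\mathrm dx\,\mathrm dt = 0.
\]

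Next, I would apply the two elementary product rules
\[
\div(\tilde{\vv}\otimes\bxi) = (\bxi\cdot\nabla)\tilde{\vv} + (\div\bxi)\,\tilde{\vv}, \qquad
\div(\bxi\otimes\tilde{\vv}) = (\tilde{\vv}\cdot\nabla)\bxi + (\div\tilde{\vv})\,\bxi,
\]
together with the orthogonality relation $(\bxi\cdot\nabla)\tilde{\vv} = 0$ provided by \eqref{2.23.1}. Subtracting the second identity from the first shows that the bracketed expression above is exactly $\div\bigl(\tilde{\vv}\otimes\bxi - \bxi\otimes\tilde{\vv}\bigr)$, and the matrix $\tilde{\vv}\otimes\bxi - \bxi\otimes\tilde{\vv}$ is antisymmetric.

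The last step is an integration by parts in $x$. Since the cutoff $\phi$ in \eqref{2.27.1} has support in $\{|d_\Gamma|\le\delta\}$, $\bxi$ is compactly supported inside $\Gamma_t(\delta)$, which lies strictly inside $\Omega$; hence $\tilde{\vv}\otimes\bxi - \bxi\otimes\tilde{\vv}$ also has compact support and the integration by parts produces no boundary terms, leaving
\[
-\int_0^T\!\int_\Omega \bigl(\tilde{\vv}\otimes\bxi - \bxi\otimes\tilde{\vv}\bigr) : \nabla^2\psi_\varepsilon\,\mathrm dx\,\mathrm dt,
\]
which vanishes pointwise because $\nabla^2\psi_\varepsilon$ is symmetric while the tensor is antisymmetric. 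The only mild analytic point is the justification of the integration by parts: for each fixed $\varepsilon>0$ the convolution $d^F_\varepsilon = \phi_\varepsilon * d^F$ is smooth with bounded derivatives, and $Q_\varepsilon \in L^2(0,T;H^2(\Omega;\mathcal Q))$ by Definition \ref{weak solutions}, so $\psi_\varepsilon \in L^2(0,T;H^2(\Omega))$, which, together with the Lipschitz regularity of $\bxi$ and $\tilde{\vv}$ on the support of $\bxi$ recorded in \eqref{regular tilde v} and \eqref{2.28.1}, is enough to make the computation rigorous. There is no serious analytic obstacle here; the heart of the lemma is the algebraic observation that \eqref{2.23.1} turns the integrand into the divergence of an antisymmetric tensor.
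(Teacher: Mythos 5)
Your proof is correct and takes essentially the same approach as the paper: both arguments hinge on the antisymmetry of $\tilde{\vv}\otimes\bxi-\bxi\otimes\tilde{\vv}$ together with the orthogonality $(\bxi\cdot\nabla)\tilde{\vv}=0$ from \eqref{2.23.1}. The only cosmetic difference is the direction of the integration by parts — the paper starts from $\div\div(\tilde{\vv}\otimes\bxi-\bxi\otimes\tilde{\vv})=0$ paired against $\psi_\varepsilon$ and moves one derivative off, whereas you move one derivative onto $\psi_\varepsilon$ to contract the antisymmetric tensor with the symmetric Hessian — which is the same computation read in reverse.
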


\begin{proof}
Using symmetry, we have $\div  \div  (\tilde{\vv}  \otimes \bxi-\bxi \otimes \tilde{\vv} )=0$ holds almost everywhere, from which we derive
\begin{align*} 
0 
&=-\int_0^T \!\int_{\Omega}  \div \Big( \div \(\tilde{\vv} \otimes \bxi-\bxi \otimes \tilde{\vv} \)\Big) \psi_\e \,\mathrm{d} x \mathrm{d} t=\int_0^T \!\int_{\Omega} \nn_\e\cdot \div \(\tilde{\vv} \otimes \bxi-\bxi \otimes \tilde{\vv} \) |\nabla\psi_\e|\,\mathrm{d} x \mathrm{d} t\nonumber\\
&=\int_0^T \!\int_{\Omega} (\div  \bxi)   \tilde{\vv}\cdot \nn_\e|\nabla\psi_\e|+(\bxi\cdot\nabla)\tilde{\vv}\cdot\nn_\e |\nabla\psi_\e|\,\mathrm{d} x \mathrm{d} t\\
&-\int_0^T \!\int_{\Omega} (\div  \tilde{\vv}) \bxi\cdot  \nn_\e|\nabla\psi_\e|+ (\tilde{\vv} \cdot \nabla) \bxi \cdot \nn_\e|\nabla\psi_\e|  \,\mathrm{d} x \mathrm{d} t.
 \end{align*}
Employing the fact \eqref{2.23.1} that $(\bxi\cdot\nabla) \tilde{\vv}=0$ in $
 \O$ to deduce $\int_0^T \!\int_{\Omega}  (\bxi\cdot\nabla)\tilde{\vv}\cdot\nn_\e |\nabla\psi_\e|\,\mathrm{d} x \mathrm{d} t=0$ and we prove \eqref{symmetry2}.
\end{proof}

\begin{proof} [Proof of Proposition \ref{prop 3.2}]
The proof relies primarily on Lemma \ref{prop2.1}, Lemma \ref{lemma:expansion 1} and Lemma \ref{lemma 3.4}. Let us deal with the second item $\int_0^T \!\int_{\Omega} (\nn_\e\cdot\nabla) \tilde{\vv}\cdot\bxi |\nabla \psi_\e|\,\mathrm{d} x \mathrm{d} t $ by using Lemma \ref{lemma:expansion 1} first.
It follows from \eqref{2.23.1} that 
\begin{align*} 
\nabla \tilde{\vv}: (\bxi-\nn_\ve) \otimes \boldsymbol{\xi}=(\boldsymbol{\xi} \cdot \nabla) \tilde{\vv} \cdot (\bxi-\nn_\ve)=0.
\end{align*}
Therefore the first term on the right-hand-side of \eqref{expansion1} can be estimated as follows:
\begin{align} \label{9.19.24}
\int_0^T \!\int_{\Omega} \nabla \tilde{\vv}: (\bxi-\nn_\ve) \otimes\nn_{\varepsilon}\left|\nabla \psi_{\varepsilon}\right|\, \mathrm{d} x \mathrm{d} t=\int_0^T \!\int_{\Omega} \nabla \tilde{\vv}: (\bxi-\nn_\ve) \otimes (\nn_{\varepsilon}-\bxi) \left|\nabla \psi_{\varepsilon}\right|\, \mathrm{d} x \mathrm{d} t.
\end{align}
The last line of \eqref{expansion1} can be computed in a manner similar to \eqref{hungry}:
\begin{small}
	\begin{align} \label{9.19.23}
&-\e\int_0^T \!\int_{\Omega} \sum_{i,j=1}^3(\nabla \tilde{\vv})_{ij}\(\p_i Q_\ve: \p_j Q_\ve   \)\, \mathrm{d} x \mathrm{d} t +\int_0^T \!\int_{\Omega} \nabla \tilde{\vv}: (\nn_\ve \otimes\nn_{\varepsilon})\left|\nabla \psi_{\varepsilon}\right|\, \mathrm{d} x \mathrm{d} t\notag\\
	=&\int_0^T \!\int_{\Omega}  \nabla \tilde{\vv}: \nn_\ve \otimes\nn_{\varepsilon}\left|\nabla \psi_{\varepsilon}\right|\,\mathrm{d} x \mathrm{d} t-\ve\int_0^T \!\int_{\Omega} \sum_{i,j=1}^3(\nabla \tilde{\vv})_{ij}(\Pi_{Q_\ve} \p_i Q_\ve :\Pi_{Q_\ve} \p_j Q_\ve)\,\mathrm{d} x \mathrm{d} t\notag\\
	&\quad-\e\int_0^T \!\int_{\Omega}  \sum_{i,j=1}^3(\nabla \tilde{\vv})_{ij} \Big((\p_i Q_\ve-\Pi_{Q_\ve} \p_i Q_\ve):(\p_j Q_\ve-\Pi_{Q_\ve} \p_j Q_\ve)\Big) \, \mathrm{d} x \mathrm{d} t\notag\\
	=&\int_0^T \!\int_{\Omega}  \nabla \tilde{\vv}: \nn_\ve \otimes\nn_{\varepsilon}\(|\nabla \psi_\ve|-\ve |\nabla Q_\ve|^2\)\,\mathrm{d} x \mathrm{d} t+\ve \int_0^T \!\int_{\Omega}  \nabla \tilde{\vv}:(\nn_\ve\otimes \nn_\ve)\(
	|\nabla Q_\ve|^2-|\Pi_{Q_\ve} \nabla Q_\ve|^2\)\, \mathrm{d} x \mathrm{d} t \notag\\
	&\quad- \e\int_0^T \!\int_{\Omega}  \sum_{i,j=1}^3(\nabla \tilde{\vv})_{ij} \Big((\p_i Q_\ve-\Pi_{Q_\ve} \p_i Q_\ve):(\p_j Q_\ve-\Pi_{Q_\ve} \p_j Q_\ve)\Big) \, \mathrm{d} x \mathrm{d} t.
	\end{align}
\end{small}
Putting \eqref{2.23.1}, \eqref{expansion1}, \eqref{9.19.24}, \eqref{9.19.23} and  Lemma \ref{prop2.1} together, we derive that
\begin{align*}
&\int_0^T \!\int_{\Omega} (\nn_\e\cdot\nabla) \tilde{\vv}\cdot\bxi |\nabla \psi_\e|\,\mathrm{d} x \mathrm{d} t=\int_0^T \!\int_{\Omega}  \nabla \tilde{\vv} : (\bxi \otimes\nn_\e)|\nabla \psi_\e| \,\mathrm{d} x \mathrm{d} t \\
	\leq &  C \int_0^T E \left[\mathbf{v}_{\varepsilon}, Q_{\varepsilon} \mid \mathbf{v}, \chi\right]\, \mathrm{d} t+\int_0^T \!\int_{\Omega}  (\div \tilde{\vv} ) \, \bxi  \cdot \nn_\e |\nabla \psi_\e| \,\mathrm{d} x \mathrm{d} t +\int_0^T \!\int_{\Omega}  \HH_\e\cdot \tilde{\vv}  |\nabla Q_\e|\,\mathrm{d} x \mathrm{d} t.
\end{align*}
And also recall \eqref{symmetry2}:
$$
0=\int_0^T \!\int_{\Omega}  (\div  \bxi)   \tilde{\vv}\cdot \nn_\e|\nabla\psi_\e|\,\mathrm{d} x \mathrm{d} t -\int_0^T \!\int_{\Omega} (\div  \tilde{\vv}) \bxi\cdot  \nn_\e|\nabla\psi_\e| \,\mathrm{d} x \mathrm{d} t - \int_0^T \!\int_{\Omega} (\tilde{\vv} \cdot \nabla) \bxi \cdot \nn_\e|\nabla\psi_\e|\,\mathrm{d} x \mathrm{d} t.
$$
Combining the above two equations, \eqref{ADM chain rule} and  \eqref{mean curvature app}, we  have
\begin{eqnarray} \label{energy2}
\begin{split}
&LHS \text{ of \eqref{energy3}} \leq  C \int_0^T E \left[\mathbf{v}_{\varepsilon}, Q_{\varepsilon} \mid \mathbf{v}, \chi\right]\, \mathrm{d} t \\
&\qquad+\int_0^T \!\int_{\Omega}  \left( \vv \cdot \nabla\right) \bxi\cdot\nn_\e |\nabla \psi_\e|  -(\div \bxi)\vv\cdot \nn_\e|\nabla \psi_\e|-\vv\cdot \HH_\e|\nabla Q_\e|\,\mathrm{d} x \mathrm{d} t \\
&\qquad+\int_0^T \!\int_{\Omega}   (\div \tilde{\vv} ) \, \bxi  \cdot \nn_\e |\nabla \psi_\e|  + \HH_\e\cdot \tilde{\vv}  |\nabla Q_\e| \,\mathrm{d} x \mathrm{d} t \\
&\qquad+\int_0^T \!\int_{\Omega}   (\div  \bxi)   \tilde{\vv}\cdot \nn_\e|\nabla\psi_\e|-(\div  \tilde{\vv}) \bxi\cdot  \nn_\e|\nabla\psi_\e|- (\tilde{\vv} \cdot \nabla) \bxi \cdot \nn_\e|\nabla\psi_\e|\,\mathrm{d} x \mathrm{d} t \\
=  &C \int_0^T E \left[\mathbf{v}_{\varepsilon}, Q_{\varepsilon} \mid \mathbf{v}, \chi\right]\, \mathrm{d} t + \int_0^T \!\int_{\Omega}   \left( (\vv-\tilde{\vv}) \cdot \nabla\right) \bxi\cdot\nn_\e |\nabla \psi_\e| \,\mathrm{d} x \mathrm{d} t  \\
&+\int_0^T \!\int_{\Omega}  (\div \bxi)(\tilde{\vv}-\vv)\cdot D d^F_\ve(Q_\ve) \colon \nabla Q_\ve-\e (\tilde{\vv}-\vv)\cdot \Big(\partial_{t} Q_{\varepsilon}+(\mathbf{v}_{\varepsilon} \cdot \nabla) Q_{\varepsilon}\Big): \nabla Q_\ve  \,\mathrm{d} x \mathrm{d} t \\
=  &C \int_0^T E \left[\mathbf{v}_{\varepsilon}, Q_{\varepsilon} \mid \mathbf{v}, \chi\right]\, \mathrm{d} t + \int_0^T \!\int_{\Omega}   \left( (\vv-\tilde{\vv}) \cdot \nabla\right) \bxi\cdot\nn_\e |\nabla \psi_\e| \,\mathrm{d} x \mathrm{d} t  \\
&+\int_0^T \!\int_{\Omega}  (\tilde{\vv}-\vv)\cdot \Big[(\div \bxi)D d^F_\ve(Q_\ve) -\e \Big(\partial_{t} Q_{\varepsilon}+(\mathbf{v}_{\varepsilon} \cdot \nabla) Q_{\varepsilon}\Big)\Big]: \nabla Q_\ve  \,\mathrm{d} x \mathrm{d} t .  
\end{split}
\end{eqnarray}
For the second term on the right-hand-side of \eqref{energy2}, we infer from  the definition of $\bxi$ and \eqref{lipschitz uu} that  
\[ \Big|(\vv-\tilde{\vv})\cdot\nabla|\bxi|^2\Big|\leq C  \min (\dd_\Gamma^2,1),\]
hence we deduce that
\begin{align*}
&\int_0^T \!\int_{\Omega}   \left( (\vv-\tilde{\vv}) \cdot \nabla\right) \bxi\cdot\nn_\e |\nabla \psi_\e| \,\mathrm{d} x \mathrm{d} t \notag\\
=&\int_0^T \!\int_{\Omega}   \left( (\vv-\tilde{\vv}) \cdot \nabla\right) |\bxi|^2 |\nabla \psi_\e| \,\mathrm{d} x \mathrm{d} t +\int_0^T \!\int_{\Omega}   \left( (\vv-\tilde{\vv}) \cdot \nabla\right) \bxi\cdot(\nn_\e-\bxi) |\nabla \psi_\e| \,\mathrm{d} x \mathrm{d} t \\
\le& C \int_0^T E \left[\mathbf{v}_{\varepsilon}, Q_{\varepsilon} \mid \mathbf{v}, \chi\right]\, \mathrm{d} t .
\end{align*}
The  remainder term  satisfies the following estimates.
\begin{align} 
&\int_0^T \!\int_{\Omega}  (\tilde{\vv}-\vv)\cdot \Big[(\div \bxi)D d^F_\ve(Q_\ve) -\e \Big(\partial_{t} Q_{\varepsilon}+(\mathbf{v}_{\varepsilon} \cdot \nabla) Q_{\varepsilon}\Big)\Big]: \nabla Q_\ve  \,\mathrm{d} x \mathrm{d} t \nonumber\\
\le& \frac{\lambda}{\e}\int_0^T \!\int_{\Omega}   \left| \ve \Big(\partial_{t} Q_{\varepsilon}+(\mathbf{v}_{\varepsilon} \cdot \nabla) Q_{\varepsilon}\Big) -(\operatorname{div} \bxi) D d^F_\ve(Q_\ve)  \right|^2\mathrm{d} x \mathrm{d} t+C \e \int_0^T \!\int_{\Omega} (\tilde{\vv}-\vv)^2 | \nabla Q_\ve|^2\,\mathrm{d} x \mathrm{d} t  \nonumber\\
\le& \frac{\lambda}{\e}\int_0^T \!\int_{\Omega}   \left| \ve \Big(\partial_{t} Q_{\varepsilon}+(\mathbf{v}_{\varepsilon} \cdot \nabla) Q_{\varepsilon}\Big) -(\operatorname{div} \bxi) D d^F_\ve(Q_\ve)  \right|^2\mathrm{d} x \mathrm{d} t+C \int_0^T E \left[\mathbf{v}_{\varepsilon}, Q_{\varepsilon} \mid \mathbf{v}, \chi\right]\, \mathrm{d} t \label{8.14.1}. 
\end{align}
Putting \eqref{energy2}-\eqref{8.14.1} together, we arrive at
\begin{small}
\begin{align*} 
&\int_0^T \!\int_{\Omega}  \left( \vv \cdot \nabla\right) \bxi\cdot\nn_\e |\nabla \psi_\e|+ (\nn_\e\cdot\nabla) \tilde{\vv}\cdot\bxi |\nabla \psi_\e|\,\mathrm{d} x \mathrm{d} t -\int_0^T \!\int_{\Omega}  \vv\cdot \HH_\e|\nabla Q_\e| +(\div \bxi) \vv\cdot \nn_\e|\nabla \psi_\e|\,\mathrm{d} x \mathrm{d} t \notag\\
  \leq  & C \int_0^T E \left[\mathbf{v}_{\varepsilon}, Q_{\varepsilon} \mid \mathbf{v}, \chi\right]\, \mathrm{d} t   +\frac{\lambda}{\e}\int_0^T \!\int_{\Omega}  \left| \ve \Big(\partial_{t} Q_{\varepsilon}+(\mathbf{v}_{\varepsilon} \cdot \nabla) Q_{\varepsilon}\Big) -(\operatorname{div} \bxi) D d^F_\ve(Q_\ve)  \right|^2\mathrm{d} x \mathrm{d} t.
\end{align*}
\end{small}
This completes the proof.
 \end{proof}

Throughout the following discussion,we will adopt the notation $$\mathbf{w}=\vv_\e-\vv$$ for convenience.
\begin{coro} \label{remark 3.3}
 As a result of Proposition \ref{lemma 3.4.26}, Corollary \ref{remark 3.1}, Corollary \ref{remark 3.2} and Proposition \ref{prop 3.2}, we claim that
\begin{align}
&\int_{\Omega} \frac{1}{2} |\mathbf{v}_\varepsilon(t)|^2+ \left[\frac{\varepsilon}{2}|\nabla Q_{\varepsilon}|^2+ \frac{1}{\varepsilon}F_\e (Q_{\varepsilon})-\boldsymbol{\xi} \cdot \nabla \psi_{\varepsilon}\right](t)\mathrm{d} x \bigg|_{t=0}^{t=T} \notag\\
&+\int_0^T \!\int_{\Omega} |\nabla \mathbf{v}_\varepsilon|^2 \mathrm{d} x \, \mathrm{d} t+\frac 1{\ve}(\frac 1{2}-\lambda)\int_0^T \!\int_{\Omega}  \left| \ve \Big(\partial_{t} Q_{\varepsilon}+(\mathbf{v}_{\varepsilon} \cdot \nabla) Q_{\varepsilon}\Big)  -(\operatorname{div} \bxi) D d^F_\ve(Q_\ve)  \right|^2\mathrm{d} x \mathrm{d} t\notag\\
		&+\frac {\ve}{2} \int_0^T \!\int_{\Omega} |\partial_{t} Q_{\varepsilon}+(\mathbf{v}_{\varepsilon} \cdot \nabla) Q_{\varepsilon}+(\HH \cdot\nabla) Q_\ve|^2\,\mathrm{d} x \mathrm{d} t \notag\\
		\le& C \int_0^T E \left[\mathbf{v}_{\varepsilon}, Q_{\varepsilon} \mid \mathbf{v}, \chi\right]\, \mathrm{d} t-\int_0^T \!\int_{\Omega} (\div \bxi) \mathbf{w} \cdot \nn_\e|\nabla \psi_\e|\, \mathrm{d} x \mathrm{d} t +\int_0^T \!\int_{\Omega}  \HH_\ve \cdot \mathbf{v} |\nabla Q_\e| \;\mathrm{d} x \mathrm{d} t \label{10.9.14}. 
\end{align}
\end{coro}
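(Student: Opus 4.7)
The plan is to assemble Corollary \ref{remark 3.3} by chaining the previously established results in sequence, starting from the main inequality \eqref{hope} of Proposition \ref{lemma 3.4.26} and absorbing each remaining right-hand-side term via the preceding corollaries and propositions. No new computation is needed, only bookkeeping.

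First, I take inequality \eqref{hope} as the starting point. Its left-hand-side already contains $\int_\Omega \tfrac12|\vv_\ve|^2+[\tfrac{\ve}{2}|\nabla Q_\ve|^2+\tfrac{1}{\ve}F_\ve(Q_\ve)-\bxi\cdot\nabla\psi_\ve]\,\mathrm{d}x\big|_0^T$, the dissipation $\int_0^T\int_\Omega|\nabla\vv_\ve|^2$, the squared term $\tfrac{1}{2\ve}|\ve(\p_tQ_\ve+(\vv_\ve\cdot\nabla)Q_\ve)-(\div\bxi)Dd^F_\ve(Q_\ve)|^2$, the squared term $\tfrac{1}{2\ve}|\HH_\ve-\ve|\nabla Q_\ve|\HH|^2$, and the quantity $\tfrac{1}{2\ve}(|\ve(\p_tQ_\ve+(\vv_\ve\cdot\nabla)Q_\ve)|^2-|\HH_\ve|^2)$. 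Corollary \ref{remark 3.1} establishes that the sum of the last three of these is bounded below by $\tfrac{\ve}{2}|\p_tQ_\ve+(\vv_\ve\cdot\nabla)Q_\ve+(\HH\cdot\nabla)Q_\ve|^2$, so after this pointwise replacement the left-hand-side takes exactly the form displayed in \eqref{10.9.14}, modulo the factor $\tfrac12-\lambda$ in front of the first squared term, which I will produce in the next step.

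Second, I turn to the right-hand-side of \eqref{hope}. By Corollary \ref{remark 3.2}, I rewrite the convective and transport integrals using the decomposition $\mathbf{w}=\vv_\ve-\vv$: the term $\int(\div\bxi)Dd^F_\ve(Q_\ve):(\vv_\ve\cdot\nabla)Q_\ve$ splits via \eqref{ADM chain rule} and \eqref{mean curvature app} into the wanted $\int(\div\bxi)\mathbf{w}\cdot\nn_\ve|\nabla\psi_\ve|$ and $\int\HH_\ve\cdot\vv|\nabla Q_\ve|$ pieces plus the two leftover velocity integrals $\int[(\vv\cdot\nabla)\bxi\cdot\nn_\ve|\nabla\psi_\ve|+(\nn_\ve\cdot\nabla)\tilde{\vv}\cdot\bxi|\nabla\psi_\ve|]$ minus $\int(\div\bxi)\vv\cdot\nn_\ve|\nabla\psi_\ve|$ minus $\int\HH_\ve\cdot\vv|\nabla Q_\ve|$. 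Thus the right-hand-side of \eqref{hope} is transformed into exactly the targeted right-hand-side of \eqref{10.9.14} plus the combination that is the left-hand-side of the estimate \eqref{energy3} in Proposition \ref{prop 3.2}.

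Third, I apply Proposition \ref{prop 3.2} to that combination, which bounds it by $C\int_0^T E\,\mathrm{d}t+\tfrac{\lambda}{\ve}\int_0^T\!\int_\Omega|\ve(\p_tQ_\ve+(\vv_\ve\cdot\nabla)Q_\ve)-(\div\bxi)Dd^F_\ve(Q_\ve)|^2\,\mathrm{d}x\mathrm{d}t$. The second contribution is moved to the left-hand-side and absorbed into the squared term there, producing the prefactor $\tfrac{1}{\ve}(\tfrac12-\lambda)$ displayed in \eqref{10.9.14}. Collecting the absorbed terms with the already-controlled $C\int_0^TE\,\mathrm{d}t$ contributions from each of Propositions \ref{lemma 3.4.26} and \ref{prop 3.2} yields \eqref{10.9.14}, which closes the proof.

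No step presents a real obstacle, since each invoked statement has already been proved. The only delicate point is the arithmetic of the two squared terms: one must ensure that the constant $\lambda$ chosen to absorb the error term in \eqref{energy3} stays strictly less than $\tfrac12$, so that the coefficient $\tfrac{1}{\ve}(\tfrac12-\lambda)$ remains positive and the term stays on the left. This is the only point worth flagging in the writeup.
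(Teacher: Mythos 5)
Your proposal is correct and takes essentially the same route as the paper, which states the corollary as a direct consequence of chaining Proposition \ref{lemma 3.4.26}, Corollary \ref{remark 3.1}, Corollary \ref{remark 3.2}, and Proposition \ref{prop 3.2} without giving a separate proof. One small slip worth noting: when you say that Corollary \ref{remark 3.1} bounds "the sum of the last three" terms from below, it in fact applies only to the two terms $\frac{1}{2\ve}\bigl|\HH_\ve-\ve|\nabla Q_\ve|\HH\bigr|^2$ and $\frac{1}{2\ve}\bigl(|\ve(\p_tQ_\ve+(\vv_\ve\cdot\nabla)Q_\ve)|^2-|\HH_\ve|^2\bigr)$, while the remaining squared term $\frac{1}{2\ve}\bigl|\ve(\p_tQ_\ve+(\vv_\ve\cdot\nabla)Q_\ve)-(\div\bxi)Dd^F_\ve(Q_\ve)\bigr|^2$ is carried along untouched to later absorb the $\lambda/\ve$ error from Proposition \ref{prop 3.2}; your subsequent discussion makes clear you understand this, so the miscount does not affect the logic.
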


With the help of these propositions, we are able to derive the estimate in Proposition \ref{prop 3.4}. 

\begin{proof}[Proof of Proposition \ref{prop 3.4}]

Testing \eqref{9.6.4} by $\mathbf{v}$, integrating the resulting equality over $\Omega$, and using integration by parts and the divergence free condition of $\mathbf{v}$, we deduce that
\begin{eqnarray}
\begin{split} \label{8.11.5}
&\int_0^T \!\int_{\Omega} \partial_t(\mathbf{v} \cdot \mathbf{v}_\varepsilon)  -\partial_t\mathbf{v} \cdot \mathbf{v}_\varepsilon -\mathbf{v}_\varepsilon \otimes \mathbf{v}_\varepsilon :\nabla \mathbf{v} \;\mathrm{d} x \mathrm{d} t +\int_0^T \!\int_{\Omega}  \nabla \mathbf{v}_\varepsilon: \nabla \mathbf{v} \;\mathrm{d} x \mathrm{d} t \\
=&\varepsilon \int_0^T \!\int_{\Omega} \nabla Q_{\varepsilon} \odot \nabla Q_{\varepsilon}:\nabla \mathbf{v}\;\mathrm{d} x \mathrm{d} t .
\end{split}
\end{eqnarray}
Next, multiply \eqref{11.8.1} by $\mathbf{w}=\vv_\e-\vv$, integrate it over $\Omega$, and we obtain
\begin{align} \label{8.11.6}
\int_0^T \!\int_{\Omega}  \(\partial_t\mathbf{v}+(\mathbf{v}\cdot \nabla) \mathbf{v}\)\cdot \mathbf{w} \;\mathrm{d} x \mathrm{d} t +\int_0^T \!\int_{\Omega}  \nabla \mathbf{v}: \nabla \mathbf{w} \;\mathrm{d} x \mathrm{d} t =-\int_0^T \!\int_{\Gamma_t} \sigma H_{\Gamma_{t}} \mathbf{n}_{\Gamma_{t}} \cdot \mathbf{w} \,\mathrm{d} \mathcal{H}^2 \, \mathrm{d} t.
\end{align}
The regularity of the sharp interface limit solution, coupled with the condition $\operatorname{div}\mathbf{v}_{\varepsilon} = 0$, implies that
\begin{align} \label{11.12.0}
\int_0^T \!\int_{\Omega} \mathbf{v} \cdot  (\mathbf{v}_\varepsilon \cdot \nabla) \mathbf{v} \,\mathrm{d} x \, \mathrm{d} t=0.
\end{align}
Combining \eqref{8.11.5}-\eqref{11.12.0}, we get
\begin{eqnarray}
\begin{split} \label{lucky}
&\;\frac{1}{2}\, \int_{\Omega} |\mathbf{w}(T)|^2\;\mathrm{d} x-\frac{1}{2}\, \int_{\Omega} |\mathbf{w}(0)|^2\;\mathrm{d} x+\int_0^T \!\int_{\Omega} (\mathbf{w}\cdot \nabla) \mathbf{v}\cdot\mathbf{w}+|\nabla \mathbf{w}|^2 \;\mathrm{d} x \mathrm{d} t  \\
=&\frac{1}{2} \int_{\Omega} |\mathbf{v}_\varepsilon(T)|^2\mathrm{d} x-\frac{1}{2} \int_{\Omega} |\mathbf{v}_\varepsilon(0)|^2\mathrm{d} x+\int_0^T \!\int_{\Omega} |\nabla \mathbf{v}_\varepsilon|^2 \mathrm{d} x \, \mathrm{d} t \\
&-\varepsilon \int_0^T \!\int_{\Omega} \nabla Q_{\varepsilon} \odot \nabla Q_{\varepsilon}:\nabla \mathbf{v}\;\mathrm{d} x \mathrm{d} t + \int_0^T \!\int_{\Gamma_t} \sigma H_{\Gamma_{t}} \mathbf{n}_{\Gamma_{t}} \cdot \mathbf{w} \,\mathrm{d} \mathcal{H}^2\, \mathrm{d} t.
\end{split}
\end{eqnarray}

For the last term on the right hand side of \eqref{lucky}, by using \eqref{2.18.2}, we have:
\begin{small}
\begin{align}
& \int_0^T \!\int_{\Gamma_t} \sigma H_{\Gamma_{t}} \mathbf{n}_{\Gamma_{t}} \cdot \mathbf{w} \,\mathrm{d} \mathcal{H}^2\, \mathrm{d} t\overset{\eqref{2.18.2}}=-\int_0^T \!\int_{\Gamma_t}\sigma (\operatorname{div}\boldsymbol{\xi} )\mathbf{n}_{\Gamma_{t}} \cdot \mathbf{w} \,\mathrm{d} \mathcal{H}^2\, \mathrm{d} t \notag\\
=& - \int_0^T \!\int_{\Omega^-(t)} \sigma \operatorname{div}[(\operatorname{div}\boldsymbol{\xi})\mathbf{w}] \,\mathrm{d} x \mathrm{d} t = - \int_0^T \!\int_{\Omega^-(t)} \sigma (\mathbf{w}\cdot \nabla)(\operatorname{div}\boldsymbol{\xi}) \,\mathrm{d} x \mathrm{d} t = - \int_0^T \!\int_{\Omega} \sigma \chi(\mathbf{w}\cdot \nabla)(\operatorname{div}\boldsymbol{\xi}) \,\mathrm{d} x \mathrm{d} t \label{8.11.2},
\end{align}
\end{small}
where the Gauss's divergence theorem and $\operatorname{div} \mathbf{w}=0$ are used. Employing an integration by parts, the definition of $ \HH_\ve$ and $\operatorname{div} \mathbf{v}=0$ again, we have
\begin{align} \label{8.11.1}
-\varepsilon \int_0^T \!\int_{\Omega} \nabla Q_{\varepsilon} \odot \nabla Q_{\varepsilon}:\nabla \mathbf{v}\;\mathrm{d} x \mathrm{d} t&=\int_0^T \!\int_{\Omega} \varepsilon \,\Big[\frac{1}{2} \nabla (|\nabla Q_{\varepsilon}|^2)+ \Delta Q_{\varepsilon} \colon \nabla Q_{\varepsilon}\Big]\cdot \mathbf{v}\;\mathrm{d} x \mathrm{d} t\notag\\
&=-\int_0^T \!\int_{\Omega}  \HH_\ve \cdot \mathbf{v} |\nabla Q_\e| \;\mathrm{d} x \mathrm{d} t.
\end{align}
Then \eqref{lucky}-\eqref{8.11.1} give
\begin{align}
&\frac{1}{2}\, \int_{\Omega} |\mathbf{w}(T)|^2\;\mathrm{d} x+\int_0^T \!\int_{\Omega} |\nabla \mathbf{w}|^2 \mathrm{d} x \mathrm{d} t \notag\\
=&\frac{1}{2}\, \int_{\Omega} |\mathbf{w}(0)|^2\;\mathrm{d} x+\frac{1}{2} \int_{\Omega} |\mathbf{v}_\varepsilon(T)|^2\mathrm{d} x-\frac{1}{2} \int_{\Omega} |\mathbf{v}_\varepsilon(0)|^2\mathrm{d} x+\int_0^T \!\int_{\Omega} |\nabla \mathbf{v}_\varepsilon|^2 \mathrm{d} x \, \mathrm{d} t\notag\\
&+\int_0^T \!\int_{\Omega} -(\mathbf{w}\cdot\nabla) (\mathbf{v}\cdot\mathbf{w})\,\mathrm{d} x \mathrm{d} t-\int_0^T \!\int_{\Omega} \sigma \chi(\mathbf{w}\cdot \nabla)(\operatorname{div}\boldsymbol{\xi})+(\div \bxi) \mathbf{w} \cdot \nn_\e|\nabla \psi_\e|\, \mathrm{d} x \mathrm{d} t \notag\\
&+\int_0^T \!\int_{\Omega} (\div \bxi) \mathbf{w} \cdot \nn_\e|\nabla \psi_\e|\, \mathrm{d} x \mathrm{d} t-\int_0^T \!\int_{\Omega}  \HH_\ve \cdot \mathbf{v} |\nabla Q_\e| \;\mathrm{d} x \mathrm{d} t .
\end{align}
Since
\begin{align*}
\left|-\int_0^T \!\int_{\Omega} (\mathbf{w}\cdot\nabla) (\mathbf{v}\cdot\mathbf{w}) \mathrm{d} x \mathrm{d} t \right| \le \| \nabla \mathbf{v}\|_{L^\infty} \int_0^T \!\int_{\Omega} |\mathbf{w}|^2 \mathrm{d} x \mathrm{d} t  \le C \int_0^T \!\int_{\Omega} |\mathbf{w}|^2 \mathrm{d} x \mathrm{d} t
\end{align*}
and also integration by parts yields 
\begin{align*}
-\int_0^T \!\int_{\Omega} \sigma \chi(\mathbf{w}\cdot \nabla)(\operatorname{div}\boldsymbol{\xi})+(\div \bxi) \mathbf{w} \cdot \nn_\e|\nabla \psi_\e|\, \mathrm{d} x \mathrm{d} t = \int_0^T \!\int_{\Omega} (\psi_\e-\sigma \chi)(\mathbf{w}\cdot \nabla)(\operatorname{div}\boldsymbol{\xi})\, \mathrm{d} x \mathrm{d} t ,
\end{align*}
thus, by  Lemma \ref{lemma2.2} and \eqref{10.9.14} in Corollary \ref{remark 3.3}, we imply that
\begin{align*}
&\quad E \left[\mathbf{v}_{\varepsilon}, Q_{\varepsilon} \mid \mathbf{v}, \chi\right](T)+(1-\lambda)\int_0^T \!\int_{\Omega} |\nabla \mathbf{w}|^2 \mathrm{d} x \mathrm{d} t \notag\\
&+\frac 1{\ve}(\frac{1}{2}-\lambda)\int_0^T \!\int_{\Omega}  \left| \ve \Big(\partial_{t} Q_{\varepsilon}+(\mathbf{v}_{\varepsilon} \cdot \nabla) Q_{\varepsilon}\Big) -(\operatorname{div} \bxi) D d^F_\ve(Q_\ve)  \right|^2\mathrm{d} x \mathrm{d} t \notag\\
&+\frac {\ve}{2} \int_0^T \!\int_{\Omega} |\partial_{t} Q_{\varepsilon}+(\mathbf{v}_{\varepsilon} \cdot \nabla) Q_{\varepsilon}+(\HH \cdot\nabla) Q_\ve|^2\,\mathrm{d} x \mathrm{d} t \nonumber \\
\le & E \left[\mathbf{v}_{\varepsilon}, Q_{\varepsilon} \mid \mathbf{v}, \chi\right](0) +C \int_0^T E \left[\mathbf{v}_{\varepsilon}, Q_{\varepsilon} \mid \mathbf{v}, \chi\right]+E_{\mathrm{vol}}\left[Q_{\varepsilon} \mid \chi\right] \, \mathrm{d} t 
\end{align*}
and the proof is complete.
\end{proof}

\section{Estimate of the Bulk Error} \label{sec c cali}
Now we turn to derive the estimate of bulk error as described in \eqref{area diff}. 
\begin{proposition}\label{lem a est}
Let $E_{\mathrm{vol}}\left[Q_{\varepsilon} \mid \chi\right]$ be defined as in \eqref{area diff}. Then there exist a generic constant $C>0$ and a small enough $\lambda>0$, such that for any $T \in [0,T_0]$, the following estimate holds. 
\begin{align}\label{energy8}
E_{\mathrm{vol}}\left[Q_{\varepsilon} \mid \chi\right](T)  \leq& E_{\mathrm{vol}}\left[Q_{\varepsilon} \mid \chi\right](0) +C \int_0^T E \left[\mathbf{v}_{\varepsilon}, Q_{\varepsilon} \mid \mathbf{v}, \chi\right]+E_{\mathrm{vol}}\left[Q_{\varepsilon} \mid \chi\right] \, \mathrm{d} t\nonumber\\
&+\frac{\e}{8}\int_0^T \!\int_{\Omega}  \(\p_t Q_\e+(\HH+\vv_\e) \cdot \nabla Q_\e\)^{2} \,\mathrm{d} x \mathrm{d} t +\lambda \int_0^T \!\int_{\Omega}  |\nabla \mathbf{v}_{\varepsilon}-\nabla \mathbf{v}|^2 \,\mathrm{d} x \mathrm{d} t.
\end{align}
\end{proposition}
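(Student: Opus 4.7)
The plan is to differentiate $E_{\mathrm{vol}}[Q_\ve\mid\chi](t)$ in time, integrate over $[0,T]$, and recast the resulting identity into \eqref{energy8} via three structural ingredients: the chain rule \eqref{8.11.7}, the evolution equation \eqref{10.19.1}, and the transportability \eqref{9.17.6} of $\vartheta$. Because $\chi=\chi_{\Omega^-(t)}$ is advected by the normal velocity of $\Gamma_t$ and $\vartheta(0)=0$, Reynolds' transport theorem gives
$$\frac{d}{dt}\int_\Omega\sigma\chi\,\vartheta(d_\Gamma)\,\mathrm{d}x=\sigma\int_\Omega\chi\,\p_t\vartheta(d_\Gamma)\,\mathrm{d}x,$$
with no residual surface contribution on $\Gamma_t$. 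Combined with \eqref{8.11.7} this reduces $\frac{d}{dt}E_{\mathrm{vol}}$ to $\int_\Omega(\sigma\chi-\psi_\ve)\p_t\vartheta\,\mathrm{d}x-\int_\Omega\vartheta\,Dd^F_\ve(Q_\ve)\colon\p_tQ_\ve\,\mathrm{d}x$. I then introduce the full material derivative $\p_tQ_\ve+(\vv_\ve+\HH)\cdot\nabla Q_\ve$ by adding and subtracting both convective terms; by \eqref{ADM chain rule} the subtracted pieces identify with $\vv_\ve\cdot\nabla\psi_\ve$ and $\HH\cdot\nabla\psi_\ve$, and integration by parts (using $\div\vv_\ve=0$ together with $\vv_\ve|_{\p\Omega}=0$ and the compact support of $\HH$) transfers those derivatives onto $\vartheta$.

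Next I would decompose $\vv_\ve=\ww+(\vv-\tilde{\vv})+\tilde{\vv}$ so that the combination $\HH+\tilde{\vv}$ pairs with $\p_t\vartheta$ to form the transportability remainder \eqref{9.17.6}, which is of order $\min\{d_\Gamma,1\}$ and therefore absorbable into $CE_{\mathrm{vol}}+CE$. The leftover $-\int_\Omega\sigma\chi(\HH+\tilde{\vv})\cdot\nabla\vartheta\,\mathrm{d}x$ is integrated by parts against $\chi$; the surface integral on $\Gamma_t$ drops thanks to $\vartheta|_{\Gamma_t}=0$, leaving only $\sigma\int_\Omega\chi\,\div(\HH+\tilde{\vv})\vartheta\,\mathrm{d}x$, which is bounded by $C\int|\vartheta|\,\mathrm{d}x$. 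The Lipschitz bound \eqref{lipschitz uu} controls the $(\vv-\tilde{\vv})$-contribution by an $O(|d_\Gamma|)$ factor, while the $\ww$-piece $\int_\Omega\psi_\ve\,\ww\cdot\nabla\vartheta\,\mathrm{d}x$ is split as $\int(\psi_\ve-\sigma\chi)\ww\cdot\nabla\vartheta+\int\sigma\chi\,\ww\cdot\nabla\vartheta$: the second integral vanishes via $\div\ww=0$, $\ww|_{\p\Omega}=0$, and $\vartheta|_{\Gamma_t}=0$, and Lemma \ref{lemma2.2} estimates the first at the cost of the $\lambda\int|\nabla\ww|^2$ term appearing on the right of \eqref{energy8}.

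The remaining material-derivative integral
$$-\int_\Omega\vartheta\,Dd^F_\ve(Q_\ve)\colon\bigl[\p_tQ_\ve+(\vv_\ve+\HH)\cdot\nabla Q_\ve\bigr]\,\mathrm{d}x$$
I would bound by Cauchy--Schwarz with weight $\ve$, combining Lemma \ref{basic control est} (the sharp inequality $|Dd^F_\ve(q)|\le\sqrt{2F_\ve(q)}$) with the coercivity $|\vartheta|^2\le C\min\{d_\Gamma^2,1\}$ from \eqref{9.17.5}; this yields an upper bound of $\tfrac{\ve}{8}\int|\p_tQ_\ve+(\vv_\ve+\HH)\cdot\nabla Q_\ve|^2\,\mathrm{d}x+\tfrac{C}{\ve}\int F_\ve(Q_\ve)\min\{d_\Gamma^2,1\}\,\mathrm{d}x$, and \eqref{energy bound3} dominates the second piece by $CE[\vv_\ve,Q_\ve\mid\vv,\chi]$. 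The main obstacle is the preceding paragraph: the spatial derivatives of $\vartheta$ weighted by $\psi_\ve$ are not individually small, and repackaging them into $E_{\mathrm{vol}}+CE$ relies both on the algebraic rearrangement $\psi_\ve=\sigma\chi-(\sigma\chi-\psi_\ve)$ and on the cancellation $\vartheta|_{\Gamma_t}=0$, which kills every boundary contribution along the moving interface and lets $\div\ww=0$ remove the otherwise non-small $\int\sigma\chi\,\ww\cdot\nabla\vartheta$ term. Integrating the resulting differential inequality in time finally yields \eqref{energy8}.
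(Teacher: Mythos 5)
Your approach differs structurally from the paper: you time-differentiate $E_{\mathrm{vol}}$ directly, whereas the paper first splits the bulk error via the identity $2(\psi_\varepsilon - \sigma\chi) = 2(\psi_\varepsilon - \sigma)^+ + (\sigma - 2(\psi_\varepsilon - \sigma)^- - \sigma\Lambda)$ (with $\Lambda = \mp 1$ on $\Omega^\pm$), defining two \emph{non-negative} pieces $g_\varepsilon(t) = \int_\Omega(\psi_\varepsilon - \sigma)^+|\vartheta(d_\Gamma)|\,\mathrm{d}x$ and $h_\varepsilon(t) = \int_\Omega(\sigma\Lambda - [\sigma - 2(\psi_\varepsilon - \sigma)^-])\vartheta(d_\Gamma)\,\mathrm{d}x$, and computes the evolution of each separately. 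That decomposition is not cosmetic; it is precisely what makes the bookkeeping step in your argument fail.

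The gap is the term you call ``the leftover.'' After pairing $(\sigma\chi - \psi_\varepsilon)$ with the transportability remainder and integrating by parts against $\chi$, you are left with $\sigma\int_\Omega\chi\,\div(\HH+\tilde{\vv})\,\vartheta\,\mathrm{d}x$. You bound it by $C\int_\Omega|\vartheta|\,\mathrm{d}x$, but that quantity is a fixed constant depending only on $\delta$ and $\Omega$ --- it is $O(1)$ uniformly in $\varepsilon$, and is neither $O(\varepsilon)$ nor a multiple of $E$ or $E_{\mathrm{vol}}$. Unlike the $\ww$-piece, $\div(\HH+\tilde{\vv})$ does not vanish, so no further cancellation is available, and the resulting inequality cannot be closed by Gronwall. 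The paper never produces a bare $\sigma\chi$ weight: in the evolutions of $g_\varepsilon$ and $h_\varepsilon$ the factor $\div(\HH+\tilde{\vv})$ (their $I_3$ and $K_3$) is always multiplied by $(\psi_\varepsilon - \sigma)^+|\vartheta|$ or by $|\sigma\Lambda - \sigma + 2(\psi_\varepsilon - \sigma)^-|\,|\vartheta|$, which are exactly the integrands of $g_\varepsilon$ and $h_\varepsilon$ and are therefore absorbable back into $C E_{\mathrm{vol}}$. The analogous statement for $\sigma\chi|\vartheta|$ is simply false.

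A second, related issue: absorbing $\int_\Omega(\sigma\chi - \psi_\varepsilon)\bigl[\p_t\vartheta + (\HH+\tilde{\vv})\cdot\nabla\vartheta\bigr]\,\mathrm{d}x$, and likewise the $(\vv - \tilde{\vv})$-weighted derivative terms, into $C E_{\mathrm{vol}}$ requires the pointwise domination $|\sigma\chi - \psi_\varepsilon||\vartheta| \le C(\sigma\chi - \psi_\varepsilon)\vartheta$. But $(\sigma\chi - \psi_\varepsilon)\vartheta$ changes sign wherever $\psi_\varepsilon > \sigma$ in $\Omega^-$, so this is not automatic; it is exactly the sign control that the $g_\varepsilon$/$h_\varepsilon$ split delivers by isolating the excess $(\psi_\varepsilon - \sigma)^+$. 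Without the decomposition, neither the $O(1)$ remainder nor the sign control can be repaired, so the proof does not go through as written.
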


\begin{proof}
Firstly, we define $\Lambda(x,t)\triangleq \mp 1$ in $\Omega^\pm$. Inspired by \cite{liu 2}, we use the decomposition
\begin{align} \label{8.9.3}
2(\psi_{\varepsilon}-\sigma \chi)=2 \psi_{\varepsilon}-\sigma-\sigma \Lambda = 2 (\psi_{\varepsilon}-\sigma )^{+} + (\sigma -2 (\psi_{\varepsilon}-\sigma )^{-}-\sigma \Lambda ) ,
\end{align}
and divide $E_{\mathrm{vol}}\left[Q_{\varepsilon} \mid \chi\right] $ into two non-negative parts:
\begin{align}
g_{\varepsilon}(t) & \triangleq \int_{\Omega} (\psi_{\varepsilon}-\sigma )^{+} |\vartheta(d_\Gamma)| \,\mathrm{d} x, \label{8.9.1}
\end{align}
and
\begin{align}
h_{\varepsilon}(t) & \triangleq \int_{\Omega} \left(\sigma \Lambda-[\sigma-2 (\psi_{\varepsilon}-\sigma )^{-}]\right) \vartheta (d_\Gamma) \,\mathrm{d} x \label{8.9.2}.
\end{align}
The non-negativity of \eqref{8.9.1} is obvious. As for \eqref{8.9.2}, since $(\psi_{\varepsilon}-\sigma)^{-} \in[0,\sigma]$, it follows that  the range of $[\sigma-2(\psi_{\varepsilon}-\sigma)^{-}]$ is $[-\sigma,\sigma]$. By employing  $\vartheta \Lambda=|\vartheta|$, we infer the non-negativity of  \eqref{8.9.2}  and deduce that
$$
h_{\varepsilon}(t) = \int_{\Omega} \left|\sigma -2(\psi_{\varepsilon}-\sigma )^{-}-\sigma \Lambda\right| |\vartheta(d_\Gamma)| \,\mathrm{d} x.
$$

Next we are devoted to derive the evolution of $g_\e$ and $h_\e$.
The evolution of \eqref{8.9.1} can be obtained from \eqref{8.11.7} that
\begin{align*}
g_{\varepsilon}(T) {=} 
& g_{\varepsilon}(0) +\int_0^T \! \int_{\{\psi_{\varepsilon}>\sigma \}}D d^F_\e(Q_\e) \colon \p_t Q_\e(x,t) |\vartheta(d_\Gamma)| \,\mathrm{d} x \mathrm{d} t \\
&-\int_0^T \!\int_{\Omega} (\psi_{\varepsilon}-\sigma )^{+} (\HH+\tilde{\vv}) \cdot \nabla |\vartheta(d_\Gamma)| \,\mathrm{d} x \mathrm{d} t\\
&+\int_0^T \!\int_{\Omega} (\psi_{\varepsilon}-\sigma )^{+} \Big(\partial_t |\vartheta(d_\Gamma)|+(\HH+\tilde{\vv}) \cdot \nabla |\vartheta(d_\Gamma)|\Big) \,\mathrm{d} x \mathrm{d} t \\
=&g_{\varepsilon}(0) +\int_0^T \!\int_{\{\psi_{\varepsilon}>\sigma \}}D d^F_\e(Q_\e) \colon \p_t Q_\e(x,t) |\vartheta(d_\Gamma)| \,\mathrm{d} x \mathrm{d} t\\
&+\int_0^T \!\int_{\Omega} (\HH+\tilde{\vv}) \cdot \nabla (\psi_{\varepsilon}-\sigma )^{+} |\vartheta(d_\Gamma)| \,\mathrm{d} x \mathrm{d} t\\
& +\int_0^T \!\int_{\Omega} (\psi_{\varepsilon}-\sigma )^{+} \div (\HH+\tilde{\vv})  \,|\vartheta(d_\Gamma)| \,\mathrm{d} x \mathrm{d} t\\
& +\int_0^T \!\int_{\Omega} (\psi_{\varepsilon}-\sigma )^{+} \Big(\partial_t |\vartheta(d_\Gamma)|+(\HH+\tilde{\vv}) \cdot \nabla |\vartheta(d_\Gamma)|\Big) \,\mathrm{d} x \mathrm{d} t\\
\triangleq& g_{\varepsilon}(0) +I_1+I_2+I_3+I_4.
\end{align*}
We can  estimate  $I_1$ by
\begin{align*} 
I_1=&\int_0^T \!\int_{\{\psi_{\varepsilon}>\sigma \}} \frac{D d^F_\e(Q_\e)}{|D d^F_\e(Q_\e)|} \colon \Big(\p_t Q_\e+(\HH+\vv_\e) \cdot \nabla Q_\e\Big) |D d^F_\e(Q_\e)||\vartheta(d_\Gamma)| \,\mathrm{d} x \mathrm{d} t\\
&-\int_0^T \!\int_{\Omega}  D d^F_\e(Q_\e) \colon \Big( (\HH+\vv_\e) \cdot \nabla Q_\e \Big) |\vartheta(d_\Gamma)| \chi_{\{\psi_{\varepsilon}>\sigma \}}  \,\mathrm{d} x \mathrm{d} t\\
\stackrel{\eqref{ADM chain rule}} {=}&\int_0^T \!\int_{\{\psi_{\varepsilon}>\sigma \}} \frac{D d^F_\e(Q_\e)}{|D d^F_\e(Q_\e)|} \colon \Big(\p_t Q_\e+(\HH+\vv_\e) \cdot \nabla Q_\e\Big) |D d^F_\e(Q_\e)||\vartheta(d_\Gamma)| \,\mathrm{d} x \mathrm{d} t\\
&-\int_0^T \!\int_{\Omega} (\HH+\vv_\e) \cdot \nabla (\psi_{\varepsilon}-\sigma )^{+} |\vartheta(d_\Gamma)| \,\mathrm{d} x \mathrm{d} t.
\end{align*}
Then we rearrange $I_1$ and $I_2$ as follows:
\begin{align*} 
I_1+I_2 =& \int_0^T \!\int_{\{\psi_{\varepsilon}>\sigma \}} \frac{D d^F_\e(Q_\e)}{|D d^F_\e(Q_\e)|} \colon \Big(\p_t Q_\e+(\HH+\vv_\e) \cdot \nabla Q_\e\Big) |D d^F_\e(Q_\e)||\vartheta(d_\Gamma)| \,\mathrm{d} x \mathrm{d} t \\
&+\int_0^T \!\int_{\Omega} (\tilde{\vv}-\vv) \cdot \nabla (\psi_{\varepsilon}-\sigma )^{+} |\vartheta(d_\Gamma)| \,\mathrm{d} x \mathrm{d} t +\int_0^T \!\int_{\Omega} (\vv-\vv_\e) \cdot \nabla (\psi_{\varepsilon}-\sigma )^{+} |\vartheta(d_\Gamma)| \,\mathrm{d} x \mathrm{d} t\\
\triangleq &J_1+J_2+J_3.
\end{align*}
It follows from \eqref{sharp lip d} and \eqref{energy bound3} that 
\begin{align} 
J_1 \leq& \frac{\e}{16}\int_0^T \!\int_{\Omega}  \Big(\p_t Q_\e+(\HH+\vv_\e) \cdot \nabla Q_\e\Big)^{2} \,\mathrm{d} x \mathrm{d} t +C \int_0^T \!\int_{\Omega} \e^{-1} { F_\ve(Q_\ve)} \min (\dd_\Gamma^2,1) \,\mathrm{d} x \mathrm{d} t \nonumber \\
\leq& C \int_0^T E \left[\mathbf{v}_{\varepsilon}, Q_{\varepsilon} \mid \mathbf{v}, \chi\right](t) \, \mathrm{d} t+\frac{\e}{16} \int_0^T \!\int_{\Omega}  \Big(\p_t Q_\e+(\HH+\vv_\e) \cdot \nabla Q_\e\Big)^{2} \,\mathrm{d} x \mathrm{d} t \label{8.7.24}.
\end{align}
To estimate $J_2$, \eqref{lipschitz uu} and \eqref{sharp lip d} imply
\begin{align} 
J_2=& \int_0^T \!\int_{\Omega} (\tilde{\vv}-\vv) \cdot \nabla (\psi_{\varepsilon}-\sigma )^{+} |\vartheta(d_\Gamma)| \,\mathrm{d} x \mathrm{d} t\notag\\
=&\int_0^T \!\int_{\Omega} D d^F_\e(Q_\e) \colon \Big( (\tilde{\vv}-\vv) \cdot \nabla Q_\e \Big) |\vartheta(d_\Gamma)| \chi_{\{\psi_{\varepsilon}>\sigma \}} \,\mathrm{d} x \mathrm{d} t \nonumber \\
\leq& \int_0^T \!\int_{\Omega} \(\frac{\varepsilon}2 \left|\nabla Q_\ve\right| ^{2}+\frac1 {\varepsilon}{ F_\ve(Q_\ve)}\) \min\(\dd_\Gamma^2,1\) \,\mathrm{d} x \mathrm{d} t  \leq C \int_0^T E \left[\mathbf{v}_{\varepsilon}, Q_{\varepsilon} \mid \mathbf{v}, \chi\right](t) \, \mathrm{d} t.
\end{align}
Moreover,  from Lemma \ref{lemma2.2}, we have
\begin{align} 
J_3=& \int_0^T \!\int_{\Omega} (\vv-\vv_\e) \cdot \nabla (\psi_{\varepsilon}-\sigma )^{+} |\vartheta(d_\Gamma)| \,\mathrm{d} x \mathrm{d} t=\int_0^T \!\int_{\Omega} (\vv_\e-\vv) (\psi_{\varepsilon}-\sigma )^{+} \nabla |\vartheta(d_\Gamma)| \,\mathrm{d} x \mathrm{d} t \nonumber \\
\leq& C \int_0^T \!\int_{\Omega} \Big((\psi_{\varepsilon}-\sigma )^{+} |\vartheta(d_\Gamma)|+ |\mathbf{v}_{\varepsilon}-\mathbf{v}|^2 \Big)\,\mathrm{d} x \mathrm{d} t+\lambda \int_0^T \!\int_{\Omega}  |\nabla \mathbf{v}_{\varepsilon}-\nabla \mathbf{v}|^2 \,\mathrm{d} x \mathrm{d} t \nonumber \\
\leq& C \int_0^T g_\e (t)\, \mathrm{d} t+C \int_0^T \!\int_{\Omega} |\mathbf{v}_{\varepsilon}-\mathbf{v}|^2 \,\mathrm{d} x \mathrm{d} t+\lambda \int_0^T \!\int_{\Omega}  |\nabla \mathbf{v}_{\varepsilon}-\nabla \mathbf{v}|^2 \,\mathrm{d} x \mathrm{d} t.
\end{align}
Using \eqref{7.25.1}, \eqref{regular tilde v}, \eqref{9.17.5} and \eqref{9.17.6}, it is evident that 
\begin{align}
I_3=&\int_0^T \!\int_{\Omega} (\psi_{\varepsilon}-\sigma )^{+} \div (\HH+\tilde{\vv})  \,|\vartheta(d_\Gamma)| \,\mathrm{d} x \mathrm{d} t \leq C \int_0^T g_\e (t)\, \mathrm{d} t,\\
I_4=&\int_0^T \!\int_{\Omega} (\psi_{\varepsilon}-\sigma )^{+} \left(\partial_t |\vartheta(d_\Gamma)|+(\HH+\tilde{\vv}) \cdot \nabla |\vartheta(d_\Gamma)|\right) \,\mathrm{d} x \mathrm{d} t\leq C \int_0^T g_\e (t)\, \mathrm{d} t \label{7.25.4}.
\end{align}
Consequently, in view of \eqref{8.7.24}- \eqref{7.25.4}, we arrive at 
\begin{align} \label{8.9.4}
g_{\varepsilon}(T) \le& g_{\varepsilon}(0)+C \int_0^T E \left[\mathbf{v}_{\varepsilon}, Q_{\varepsilon} \mid \mathbf{v}, \chi\right](t)\, \mathrm{d} t+C \int_0^T g_\e (t)\, \mathrm{d} t \nonumber\\
&+\frac{\e}{16}\int_0^T \!\int_{\Omega}  \Big(\p_t Q_\e+(\HH+\vv_\e) \cdot \nabla Q_\e\Big)^{2} \,\mathrm{d} x \mathrm{d} t+\lambda \int_0^T \!\int_{\Omega}  |\nabla \mathbf{v}_{\varepsilon}-\nabla \mathbf{v}|^2 \,\mathrm{d} x \mathrm{d} t.
\end{align}

Similarly to $g_\e$, the evolution of \eqref{8.9.2} can be calculated as follows.
\begin{align*}
h_{\varepsilon}(T)=&h_{\varepsilon}(0)+ \int_0^T \!\int_{\{\psi_{\varepsilon}<\sigma \}} -2 \partial_t \psi_\e \vartheta (d_\Gamma ) \,\mathrm{d} x \mathrm{d} t\\
&-\int_0^T \!\int_{\Omega} (\HH+\tilde{\vv}) \cdot \nabla \vartheta (d_\Gamma ) \left(\sigma \Lambda-[\sigma-2 (\psi_{\varepsilon}-\sigma )^{-}]\right)\,\mathrm{d} x \mathrm{d} t\\
& +\int_0^T \!\int_{\Omega} \vartheta^{\prime} (d_\Gamma ) \left(\partial_t d_\Gamma +(\HH+\tilde{\vv}) \cdot \nabla d_\Gamma \right) \left(\sigma \Lambda-[\sigma-2 (\psi_{\varepsilon}-\sigma )^{-}]\right)\,\mathrm{d} x \mathrm{d} t \\
=&h_{\varepsilon}(0)+\int_0^T \!\int_{\{\psi_{\varepsilon}<\sigma \}} -2 D d^F_\e(Q_\e) \colon \p_t Q_\e(x,t)  \vartheta (d_\Gamma )\,\mathrm{d} x \mathrm{d} t\\
&+\int_0^T \!\int_{\Omega} (\HH+\tilde{\vv}) \cdot \nabla  \left(\sigma \Lambda-[\sigma-2 (\psi_{\varepsilon}-\sigma )^{-}]\right) \vartheta (d_\Gamma ) \,\mathrm{d} x \mathrm{d} t\\
& +\int_0^T \!\int_{\Omega} \left(\sigma \Lambda-[\sigma-2 (\psi_{\varepsilon}-\sigma )^{-}]\right) \div (\HH+\tilde{\vv})  \vartheta (d_\Gamma ) \,\mathrm{d} x \mathrm{d} t \\
& +\int_0^T \!\int_{\Omega} \vartheta^{\prime} (d_\Gamma ) \left(\partial_t d_\Gamma +(\HH+\tilde{\vv}) \cdot \nabla d_\Gamma \right) \left(\sigma \Lambda-[\sigma-2 (\psi_{\varepsilon}-\sigma )^{-}]\right)\,\mathrm{d} x \mathrm{d} t \\
\triangleq&h_{\varepsilon}(0)+K_1+K_2+K_3+K_4.
\end{align*}
For  $K_1$ and $K_2$,
\begin{align*} 
K_1+K_2= & \int_0^T \!\int_{\{\psi_{\varepsilon}<\sigma \}} -2 \frac{D d^F_\e(Q_\e)}{|D d^F_\e(Q_\e)|} \colon \Big(\partial_t Q_\e+(\HH+\vv_\e) \cdot \nabla Q_\e \Big) |D d^F_\e(Q_\e)| \vartheta (d_\Gamma )\,\mathrm{d} x \mathrm{d} t \\
&-2\int_0^T \!\int_{\{\psi_{\varepsilon}<\sigma \}} (\tilde{\vv}-\vv) \cdot \left(D d^F_\e(Q_\e) \colon \nabla Q_{\varepsilon}\right) \vartheta (d_\Gamma ) \,\mathrm{d} x \mathrm{d} t \\
&-2\int_0^T \!\int_{\{\psi_{\varepsilon}<\sigma \}} (\vv-\vv_\e) \cdot \left(D d^F_\e(Q_\e) \colon \nabla Q_{\varepsilon}\right) \vartheta (d_\Gamma ) \,\mathrm{d} x \mathrm{d} t.
\end{align*}
Repeat the discussions above and we obtain
\begin{align} \label{8.9.5}
h_{\varepsilon}(T) \le& h_{\varepsilon}(0) + C \int_0^T E \left[\mathbf{v}_{\varepsilon}, Q_{\varepsilon} \mid \mathbf{v}, \chi\right](t)+ h_\e (t)\, \mathrm{d} t+\frac{\e}{16}\int_0^T \!\int_{\Omega}  (\p_t Q_\e+(\HH+\vv_\e) \cdot \nabla Q_\e)^{2} \,\mathrm{d} x \mathrm{d} t \nonumber\\
&+\lambda \int_0^T \!\int_{\Omega}  |\nabla \mathbf{v}_{\varepsilon}-\nabla \mathbf{v}|^2 \,\mathrm{d} x \mathrm{d} t.
\end{align}

Using \eqref{8.9.3}, \eqref{8.9.4} and \eqref{8.9.5}, we conclude
\begin{align*}
E_{\mathrm{vol}}\left[Q_{\varepsilon} \mid \chi\right](T)  \leq& E_{\mathrm{vol}}\left[Q_{\varepsilon} \mid \chi\right](0) +C \int_0^T E \left[\mathbf{v}_{\varepsilon}, Q_{\varepsilon} \mid \mathbf{v}, \chi\right](t)+E_{\mathrm{vol}}\left[Q_{\varepsilon} \mid \chi\right] \, \mathrm{d} t\\
&+\frac{\e}{8}\int_0^T \!\int_{\Omega}  \Big(\p_t Q_\e+(\HH+\vv_\e) \cdot \nabla Q_\e\Big)^{2} \,\mathrm{d} x \mathrm{d} t +\lambda \int_0^T \!\int_{\Omega}  |\nabla \mathbf{v}_{\varepsilon}-\nabla \mathbf{v}|^2 \,\mathrm{d} x \mathrm{d} t
\end{align*}
and finish the proof.
 \end{proof}
 

\section{Proofs of Main Theorems}

Utilizing the prior estimates derived in the preceding sections, we are now prepared to prove Theorem \ref{thm1.1}.

\begin{proof} [Proof of Theorem \ref{thm1.1}]
Applying Proposition \ref{prop 3.4} and Proposition \ref{lem a est}, we have
\begin{align*}
 & \quad E \left[\mathbf{v}_{\varepsilon}, Q_{\varepsilon} \mid \mathbf{v}, \chi\right](T)+ E_{\mathrm{vol}}\left[Q_{\varepsilon} \mid \chi\right](T)+\frac{1}{2} \int_0^T \!\int_{\Omega} |\nabla \mathbf{v}_{\varepsilon}-\nabla \mathbf{v}|^2 \mathrm{d} x \mathrm{d} t \notag\\
&+\frac 1{4\ve}\int_0^T \!\int_{\Omega}  \left| \ve \Big(\partial_{t} Q_{\varepsilon}+(\mathbf{v}_{\varepsilon} \cdot \nabla) Q_{\varepsilon}\Big) -(\operatorname{div} \bxi) D d^F_\ve(Q_\ve)  \right|^2\mathrm{d} x \mathrm{d} t \\
&+\frac {\ve}{4} \int_0^T \!\int_{\Omega} |\partial_{t} Q_{\varepsilon}+(\mathbf{v}_{\varepsilon} \cdot \nabla) Q_{\varepsilon}+(\HH \cdot\nabla) Q_\ve|^2\,\mathrm{d} x \mathrm{d} t\nonumber \\
\le & E \left[\mathbf{v}_{\varepsilon}, Q_{\varepsilon} \mid \mathbf{v}, \chi\right](0)+ E_{\mathrm{vol}}\left[Q_{\varepsilon} \mid \chi\right](0)+C\int_0^T E \left[\mathbf{v}_{\varepsilon}, Q_{\varepsilon} \mid \mathbf{v}, \chi\right]+E_{\mathrm{vol}}\left[Q_{\varepsilon} \mid \chi\right] \, \mathrm{d} t,
\end{align*}
after choosing $\lambda$ to be suitably small.
Therefore, by Gronwall's inequality, we conclude that
\begin{align*}
 &\quad E \left[\mathbf{v}_{\varepsilon}, Q_{\varepsilon} \mid \mathbf{v}, \chi\right](t)+E_{\mathrm{vol}}\left[Q_{\varepsilon} \mid \chi\right](t)+\frac{1}{2} \int_0^T \!\int_{\Omega} |\nabla \mathbf{v}_{\varepsilon}-\nabla \mathbf{v}|^2 \mathrm{d} x \mathrm{d} t \notag\\
&+\frac 1{4\ve} \int_0^T \!\int_{\Omega} \left| \ve \Big(\partial_{t} Q_{\varepsilon}+(\mathbf{v}_{\varepsilon} \cdot \nabla) Q_{\varepsilon}\Big) -(\operatorname{div} \bxi) D d^F_\ve(Q_\ve)  \right|^2\mathrm{d} x \mathrm{d} t \notag\\
&+\frac {\ve}{4}  \int_0^T \!\int_{\Omega} |\partial_{t} Q_{\varepsilon}+(\mathbf{v}_{\varepsilon} \cdot \nabla) Q_{\varepsilon}+(\HH \cdot\nabla) Q_\ve|^2\,\mathrm{d} x \mathrm{d} t \nonumber \\
\le & (1+Ct e^{Ct})\big(E \left[\mathbf{v}_{\varepsilon}, Q_{\varepsilon} \mid \mathbf{v}, \chi\right](0)+E_{\mathrm{vol}}\left[Q_{\varepsilon} \mid \chi\right](0)\big) \le C \e
\end{align*}
as \eqref{initial} is satisfied. The proof of Theorem \ref{thm1.1} is done.
\end{proof}


The following property is crucial to proving Theorem \ref{main thm}.
\begin{proposition}(see \cite{liu})
As the uniform estimate \eqref{basic constant1} holds and $\Omega^+(t)$ is a smooth simply-connected domain, there exists a subsequence of $\ve_k>0$  such that
\begin{subequations}\label{global control}
\begin{align}\label{global control1}
\left[\p_t Q_{\ve_k},Q_{\ve_k} \right]=\left[\p_t Q_{\ve_k}-\Pi_{Q_{\ve_k}}\p_t Q_{\ve_k},Q_{\ve_k} \right]\xrightarrow{k\to \infty}& \bar{S}_0(x,t)~\text{weakly in}~  L^2(0,T;L^2(\Omega)),\\
\left[\p_j Q_{\ve_k},Q_{\ve_k} \right]=\left[\p_j Q_{\ve_k}-\Pi_{Q_{\ve_k}}\p_j Q_{\ve_k},Q_{\ve_k} \right]\xrightarrow{k\to \infty}& \bar{S}_j(x,t)~\text{weakly-star in}~  L^\infty(0,T;L^2(\Omega))
\label{global control t}
\end{align}
\end{subequations}
holds for  $1\leq j\leq 3$. Furthermore, 
\begin{subequations}\label{weak strong convergence}
\begin{align}
\p_t Q_{\ve_k}\xrightarrow{ k\to\infty } \p_t Q&,~\text{weakly in}~  L^2(0,T;L^2_{loc}(\Omega^\pm(t))),\label{deri con2}\\
\nabla Q_{\ve_k}\xrightarrow{k\to\infty }  \nabla Q&,~\text{weakly in}~  L^\infty(0,T;L^2_{loc}(\Omega^\pm(t))),\label{deri con}\\
  Q_{\ve_k}\xrightarrow{k\to\infty }    Q & ,~\text{strongly in}~  C([0,T];L^2_{loc}(\Omega^\pm(t))),\label{deri con1}
\end{align}
\end{subequations}
where  $Q=Q(x,t)$ is defined by
\begin{align}\label{limuni}
&Q (x,t)=s^\pm \(\mathbf{u}(x,t) \otimes \mathbf{u}(x,t)-\frac 13I_3\)~\text{a.e.}~(x,t)\in \Omega^\pm_T\end{align}
and $\mathbf{u}$ is a unit vector field satisfies the regularity estimates
\begin{equation}\label{orientation integrable}
\mathbf{u}\in L^\infty(0,T;H^1(\Omega^+(t);\BS))\cap H^1(0,T;L^2(\Omega^+(t);\BS))\cap C([0,T];L^2(\Omega^+(t);\BS)).
\end{equation}
\end{proposition}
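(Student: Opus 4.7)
The plan is to combine the uniform $L^\infty$-bound of Lemma \ref{L infinity bound} with the relative-energy estimates of Theorem \ref{thm1.1}, exploit the isotropy of the mollified quasi-distance $d^F_\ve$ to reduce the commutators $[\p_\alpha Q_\ve,Q_\ve]$ to their tangential components, and then pass to the limit by Aubin--Lions compactness. Since $\|Q_\ve\|_{L^\infty}\le c_0$, a subsequence $\ve_k\downarrow 0$ yields $Q_{\ve_k}\rightharpoonup Q$ weak-$*$ in $L^\infty$. The estimate $\int_\O F(Q_\ve)/\ve\,\dd x\le E[\cdot](t)\le C\ve\to 0$ forces $Q\in\{0\}\cup\N$ a.e.; the bulk-error bound $E_{\mathrm{vol}}\le C\ve\to 0$ together with the positivity splitting \eqref{8.9.1}--\eqref{8.9.2} then pins the two phases to $\Omega^\pm(t)$, producing $Q=s^\pm(\uu\otimes\uu-\tfrac13 I_3)$. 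Simple-connectedness of $\Omega^+(t)$ furnishes a continuous global lift $\uu:\Omega^+(t)\to\BS$, removing the $\pm\uu$ ambiguity.

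The isotropy of $\phi$ in \eqref{psi convolu} gives $d^F_\ve(M^\top Q M)=d^F_\ve(Q)$ for every $M\in O(3)$; differentiating, $M^\top Dd^F_\ve(Q)M=Dd^F_\ve(M^\top QM)$, so $Dd^F_\ve(Q)$ is an isotropic matrix-valued function of $Q$, hence (by the standard spectral representation of such functions) a polynomial in $Q$ that commutes with $Q$. Since by \eqref{projection1} each $\Pi_{Q_\ve}\p_\alpha Q_\ve$ is parallel to $Dd^F_\ve(Q_\ve)$, one has $[\Pi_{Q_\ve}\p_\alpha Q_\ve,Q_\ve]=0$ for $\alpha\in\{t,1,2,3\}$, yielding the first equalities in \eqref{global control1}--\eqref{global control t}; the same reasoning with $F$ gives $[DF(Q_\ve),Q_\ve]=0$. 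The spatial bound is then immediate: \eqref{energy bound2} and Theorem \ref{thm1.1} give $\int_\O|\nabla Q_\ve-\Pi_{Q_\ve}\nabla Q_\ve|^2\,\dd x\le 2E[\cdot]/\ve\le C$, so $[\nabla Q_\ve,Q_\ve]\in L^\infty(0,T;L^2(\O))$ uniformly. For the temporal bound, set $R_\ve:=\ve(\p_t Q_\ve+\vv_\ve\cdot\nabla Q_\ve)-(\div\bxi)Dd^F_\ve(Q_\ve)$, which by Theorem \ref{thm1.1} satisfies $\|R_\ve\|_{L^2_{t,x}}\le C\ve$; writing $\ve\p_t Q_\ve=R_\ve+(\div\bxi)Dd^F_\ve(Q_\ve)-\ve\vv_\ve\cdot\nabla Q_\ve$ and taking $[\,\cdot\,,Q_\ve]$ annihilates the middle piece and converts the convective term into $\ve\vv_\ve\cdot[\nabla Q_\ve-\Pi_{Q_\ve}\nabla Q_\ve,Q_\ve]$ via the same isotropy cancellation. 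Combining $\|[R_\ve,Q_\ve]\|_{L^2(L^2)}=O(\ve)$, the $L^\infty(L^2)$ bound on $[\nabla Q_\ve-\Pi_{Q_\ve}\nabla Q_\ve,Q_\ve]$, and the space-time regularity $\vv_\ve\in L^\infty L^2\cap L^2 H^1$, one closes $\|[\p_t Q_\ve,Q_\ve]\|_{L^2(L^2)}\le C$; the weak and weak-$*$ limits $\bar S_0,\bar S_j$ then exist along a further subsequence.

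On compact subsets of $\Omega^\pm(t)$ (away from $\Gamma$) the concentration of $F(Q_\ve)/\ve\to 0$ on the discrete zero set $\{0\}\cup\N$, together with standard Allen--Cahn-type regularization (cf.\ \cite{liu}), upgrades the energy bound to $\ve$-uniform $L^2_{\mathrm{loc}}$-bounds on $\nabla Q_\ve$; the equation \eqref{10.19.1} then gives $\p_t Q_\ve\in L^2_{\mathrm{loc}}(H^{-1})$, and Aubin--Lions delivers $Q_{\ve_k}\to Q$ strongly in $C([0,T];L^2_{\mathrm{loc}}(\Omega^\pm(t)))$. On $\Omega^+(t)$, the identity $\p_\alpha Q=s_+(\p_\alpha\uu\otimes\uu+\uu\otimes\p_\alpha\uu)$ combined with $|\uu|=1$ identifies the commutator limits with $s_+^2(\p_\alpha\uu\otimes\uu-\uu\otimes\p_\alpha\uu)$, equivalently with $s_+^2\,\p_\alpha\uu\wedge\uu$, whence $\p_\alpha\uu\in L^2$ in space-time, so $\uu\in L^\infty(0,T;H^1)\cap H^1(0,T;L^2)\cap C([0,T];L^2)$.

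The delicate point is the uniform $L^2(L^2)$-bound on $[\p_t Q_\ve,Q_\ve]$: $\p_t Q_\ve$ itself is of size $\ve^{-1/2}$ in the interfacial layer, and only the isotropy cancellation $[Dd^F_\ve(Q_\ve),Q_\ve]=0$ combined with the $O(\ve)$ smallness of the Allen--Cahn-type residual $R_\ve$ obtained from relative-energy dissipation makes the estimate close; the fluid convection $\vv_\ve\cdot\nabla Q_\ve$ is admissible only because it pairs, after commutation with $Q_\ve$, with the tangential factor $\nabla Q_\ve-\Pi_{Q_\ve}\nabla Q_\ve$ of bounded $L^2$-norm, rather than the full $\nabla Q_\ve$ of size $\ve^{-1/2}$. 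Orchestrating this cancellation in the coupled system is the heart of the argument.
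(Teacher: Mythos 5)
Your overall strategy matches the one the paper delegates to \cite[Prop.\ 5.2]{liu}: use the $L^\infty$ bound and the relative-energy dissipation to get uniform bounds on the commutators, exploit isotropy so that $Dd^F_\ve(Q)$ (and $DF(Q)$) commute with $Q$, hence $[\Pi_{Q_\ve}\p_\alpha Q_\ve,Q_\ve]=0$, and conclude by Aubin--Lions and the lifting provided by simple-connectedness. The isotropy argument, the spatial $L^\infty(L^2)$ bound on $[\nabla Q_\ve,Q_\ve]=[\nabla Q_\ve-\Pi_{Q_\ve}\nabla Q_\ve,Q_\ve]$ via \eqref{energy bound2}, the identification of the limit on $\Omega^\pm_T$, and the wedge-product translation are all in order, and you are right that the paper itself offers only a pointer to \cite{liu} (a model without fluid), so carrying the convection through is exactly the point one should scrutinize.

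The gap is precisely there: your claimed closure of $\|[\p_t Q_\ve,Q_\ve]\|_{L^2(0,T;L^2(\O))}\le C$ does not follow from the bounds you invoke. After commuting, the convective contribution is
\begin{equation*}
\vv_\ve\cdot\bigl[\nabla Q_\ve-\Pi_{Q_\ve}\nabla Q_\ve,\,Q_\ve\bigr],
\end{equation*}
and the factors at your disposal are $\vv_\ve\in L^\infty(0,T;L^2)\cap L^2(0,T;H^1)$ and $[\nabla Q_\ve-\Pi_{Q_\ve}\nabla Q_\ve,Q_\ve]\in L^\infty(0,T;L^2)$. In three dimensions this only gives the product in $L^2(0,T;L^{3/2}(\O))$ (via $\vv_\ve\in L^2(0,T;L^6)$), or $L^\infty(0,T;L^1(\O))$, not $L^2(0,T;L^2(\O))$; writing $\vv_\ve=\vv+(\vv_\ve-\vv)$ and using $\|\vv_\ve-\vv\|_{L^2(0,T;H^1)}=O(\sqrt\ve)$ makes the bad piece small in $L^2(L^{3/2})$ but still not $L^2(L^2)$. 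So the asserted weak convergence of $[\p_t Q_{\ve_k},Q_{\ve_k}]$ \emph{in} $L^2(0,T;L^2(\O))$ is not established by your argument; at most one obtains weak convergence in $L^2(0,T;L^{3/2}(\O))$ or extraction of a limit after testing against $L^2(0,T;L^\infty)$ fields, and the final statement would have to be phrased accordingly (this still suffices downstream in the proof of Theorem \ref{main thm}, since there one only pairs against smooth $\Phi$ and then sends $\delta\to0$ using absolute continuity). Either supply an additional estimate putting $\vv_\ve\cdot[\nabla Q_\ve-\Pi_{Q_\ve}\nabla Q_\ve,Q_\ve]$ in $L^2(L^2)$ (for instance an improved integrability for $[\nabla Q_\ve-\Pi_{Q_\ve}\nabla Q_\ve,Q_\ve]$, which you do not have), or weaken the target space in \eqref{global control1}. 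A smaller remark: the assertion ``$Q\in\{0\}\cup\N$ a.e.'' cannot be read off the weak-$\ast$ limit of $Q_{\ve_k}$ and $\int F(Q_\ve)/\ve\to0$ alone, since $F$ is nonlinear; it becomes correct only after you have the pointwise a.e.\ (Aubin--Lions) convergence, so the order of the argument should be reversed there.
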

\begin{proof}
By utilizing \eqref{basic constant1}, the corresponding proof can be found in Proposition 5.2 in \cite{liu}. The necessity of the condition $\Omega^+(t)$ being a smooth, simply-connected domain is discussed in detail in \cite[Section 3.2]{maiersaupe}.
\end{proof}

Based on this proposition, we are ready to prove the Theorem \ref{main thm}.

  \begin{proof}[Proof of Theorem \ref{main thm}]
Throughout the following process, we using the notation  $A:B=\tr A^T B$ where $A$ and $B$ are ${3\times 3}$ matrices.
 We associate each testing vector field  $\boldsymbol{\varphi}(x,t)=(\varphi_1,\varphi_2,\varphi_3)\in C^1(\overline{\O_T},\R^3)$ with a matrix-valued function
\begin{equation}\label{wedge matrix}
\Phi(x,t)=\begin{pmatrix}
0&\varphi_3 &-\varphi_2\\
-\varphi_3 & 0 & \varphi_1\\
\varphi_2 & -\varphi_1 & 0
\end{pmatrix}.
\end{equation}
It is important to emphasize again that $[D  F(Q_{\ve_k}), Q_{\ve_k}]=0$, which inspires us to  apply the anti-symmetric product   $[\cdot, Q_{\ve_k}] $ to    \eqref{10.19.1} and integration by parts to derive
\begin{small}
\begin{align}
0=&\int_0^T \!\int_{\Omega} \left[\p_t Q_{\ve_k},  Q_{\ve_k}\right] :\Phi  \, \mathrm{d} x \mathrm{d} t + \int_0^T \!\int_{\Omega}  \sum_{j=1}^3 [\p_j Q_{\ve_k},  Q_{\ve_k}]:\p_j \Phi \, \mathrm{d} x \mathrm{d} t + \int_0^T \!\int_{\Omega}  \sum_{j=1}^3 {v_{\e_k}}_j[\p_j Q_{\ve_k},  Q_{\ve_k}]:\Phi \, \mathrm{d} x \mathrm{d} t \nonumber\\
=& \sum_{\pm}\int_0^T\int_{\Omega^\pm(t)\backslash \Gamma_t(\delta)}\(\left[\p_t Q_{\ve_k},  Q_{\ve_k}\right] :\Phi  +  \sum_{j=1}^3[\p_j Q_{\ve_k},  Q_{\ve_k}]:\p_j \Phi +\sum_{j=1}^3 {v_{\e_k}}_j[\p_j Q_{\ve_k},  Q_{\ve_k}]:\Phi \)\, \mathrm{d} x \mathrm{d} t\nonumber\\
 &+\int_0^T\int_{  \Gamma_t(\delta)}\(\left[\p_t Q_{\ve_k},  Q_{\ve_k}\right] :\Phi  +  \sum_{j=1}^3[\p_j Q_{\ve_k},  Q_{\ve_k}]:\p_j \Phi +\sum_{j=1}^3 {v_{\e_k}}_j[\p_j Q_{\ve_k},  Q_{\ve_k}]:\Phi  \)\, \mathrm{d} x \mathrm{d} t.
\end{align}
\end{small}
When $k\to \infty$, we obtain from \eqref{global control} - \eqref{limuni} that
\begin{align}
&\int_0^T\int_{\Omega^+(t)\backslash \Gamma_t(\delta)}\(\left[\p_t Q,  Q\right] :\Phi  +  \sum_{j=1}^3[\p_j Q,  Q]:\p_j \Phi+\sum_{j=1}^3 v_j[\p_j Q,  Q]:\Phi \)\, \mathrm{d} x \mathrm{d} t\nonumber\\
&\qquad +\int_0^T\int_{  \Gamma_t(\delta)}\(\bar{S}_0 :\Phi  + \sum_{j=1}^3 \bar{S}_j:\p_j \Phi +\sum_{j=1}^3 v_j\bar{S}_j:\Phi \)\, \mathrm{d} x \mathrm{d} t=0.
\end{align}
Notice that the identity 
\[\p_i \mathbf{u}\otimes \mathbf{u}-\mathbf{u}\otimes \p_i \mathbf{u}=\begin{pmatrix}
0& (\p_i \mathbf{u}\wedge \mathbf{u})_3 &-(\p_i \mathbf{u}\wedge \mathbf{u})_2\\
-(\p_i \mathbf{u}\wedge \mathbf{u})_3 & 0 & (\p_i \mathbf{u}\wedge \mathbf{u})_1\\
(\p_i \mathbf{u}\wedge \mathbf{u})_2 & -(\p_i \mathbf{u}\wedge \mathbf{u})_1 & 0
\end{pmatrix}\label{wedge matrix new}\]
holds for $i=0,1,2,3$,  where $(\p_i \mathbf{u}\wedge \mathbf{u})_k$ is the $k$-th component of $\p_i \mathbf{u}\wedge \mathbf{u}$ and $\p_0 \triangleq \p_t$. Since $\uu$ is an unite vector and employing \eqref{limuni} and \eqref{wedge matrix}, we further verify that
\begin{align*}
[\p_t Q,  Q]:\Phi&=s_+^2\(\p_t \mathbf{u}\otimes \mathbf{u}-\mathbf{u}\otimes \p_t \mathbf{u}\): \Phi=2s_+^2\p_t \mathbf{u}\wedge \mathbf{u}\cdot \boldsymbol{\varphi},\\
[\p_j Q,   Q]:\p_j\Phi&=s_+^2\(\p_j \mathbf{u}\otimes \mathbf{u}-\mathbf{u}\otimes \p_j \mathbf{u}\): \p_j\Phi=2s_+^2\p_j \mathbf{u}\wedge \mathbf{u}\cdot \p_j \boldsymbol{\varphi}.
\end{align*} 
Thereby, we achieve the estimates
\begin{align}
&2s_+^2 \int_0^T\int_{\Omega^+(t)\backslash \Gamma_t(\delta)}\(\p_t \mathbf{u}\wedge \mathbf{u}\cdot \boldsymbol{\varphi} +\sum_{j=1}^3(\p_j  \mathbf{u}\wedge \mathbf{u})\cdot \p_j \boldsymbol{\varphi}+\sum_{j=1}^3 v_j(\p_j  \mathbf{u}\wedge \mathbf{u})\cdot \boldsymbol{\varphi}\)\, \mathrm{d} x \mathrm{d} t\nonumber\\
&\qquad   + \int_0^T\int_{  \Gamma_t(\delta)}\(\bar{S}_0 :\Phi  +  \sum_{j=1}^3\bar{S}_j:\p_j \Phi +\sum_{j=1}^3 v_j\bar{S}_j:\Phi\)\, \mathrm{d} x \mathrm{d} t=0.
\end{align}

By virtue of \eqref{orientation integrable}, it implies the absolute continuity of $\p_t \mathbf{u}\wedge \mathbf{u}$ and $\nabla \mathbf{u}\wedge \mathbf{u}$ in $\O^+_T$. By using \eqref{global control}, one has the absolute continuity of $\{\bar{S}_i\}_{0\leq i\leq 3}$ in $\O_T$. Taking the limit  $\delta\to 0$ in the above identity gives
 \begin{equation}
 \int_0^T\int_{\Omega^+(t)}\p_t \mathbf{u}\wedge \mathbf{u}\cdot \boldsymbol{\varphi}\, \mathrm{d} x \mathrm{d} t +\int_0^T\int_{\Omega^+(t)}\sum_{j=1}^3(\p_j  \mathbf{u}\wedge \mathbf{u})\cdot \p_j \boldsymbol{\varphi}\, \mathrm{d} x \mathrm{d} t  +\int_0^T\int_{\Omega^+(t)}\sum_{j=1}^3 v_j(\p_j  \mathbf{u}\wedge \mathbf{u})\cdot \boldsymbol{\varphi}\, \mathrm{d} x \mathrm{d} t   =0.\end{equation}
 The proof of Theorem \ref{main thm} is completed.
 \end{proof}

\noindent{\bf Acknowledgments:} The author thanks  Professor Yuning Liu for introducing her to his work with Tim Laux \cite{liu} and guiding her toward the relevant reference \cite{Sebastian Hensel}. 
 Additionally, thanks to Professor Feng Xie for the valuable comments and suggestions provided for this article.

\bibliography{/Users/yl67/Dropbox/cfs.bib}
\bibliographystyle{abbrv}

\end{document}